\newtheorem{theorem}{Theorem}[section]
\theoremstyle{plain}
\newtheorem{corollary}[theorem]{Corollary}
\newtheorem{lemma}[theorem]{Lemma}
\newtheorem{proposition}[theorem]{Proposition}
\newtheorem{remark}[theorem]{Remark}
\numberwithin{equation}{section}
\begin{document}
\title[Diophantine analysis under continuum many bases]{Diophantine analysis of the expansions of a fixed point under continuum many bases}
\author[F. L\"{u}]{Fan L\"{u}}
\address{Department of Mathematics\\
Sichuan Normal University\\
610066, Chengdu, P. R. China}
\email{lvfan1123@163.com}
\author[B. Wang]{Baowei Wang}
\address{School of Mathematics and Statistics\\
Huazhong University of Science and Technology\\
430074, Wuhan, P. R. China}
\email{bwei\_wang@hust.edu.cn}\email{jun.wu@hust.edu.cn}
\author[J. Wu]{Jun Wu}
\subjclass[2010]{Primary 11K55; Secondary 11J83}
\keywords{Diophantine analysis, Metric theory, Beta expansion, Full cylinder}

\begin{abstract}
In this paper, we study the Diophantine properties of the orbits of a fixed point in its expansions under continuum many bases. More precisely, let $T_{\beta}$ be the beta-transformation with base
$\beta>1$, $\{x_{n}\}_{n\geq 1}$ be a sequence of real numbers in $[0,1]$ and $\varphi\colon \mathbb{N}\rightarrow (0,1]$ be
a positive function. With a detailed analysis on the distribution of {\em full cylinders} in the base space $\{\beta>1\}$, it is shown that for any given $x\in(0,1]$,
for almost all or almost no bases $\beta>1$, the orbit of $x$ under $T_{\beta}$ can $\varphi$-well approximate the sequence $\{x_{n}\}_{n\geq 1}$
according to the divergence or convergence of the series  $\sum \varphi(n)$.
This strengthens Schmeling's result significantly and complete all known results in this aspect. Moreover, the idea presented here can also be used to determine the Lebesgue measure of the set
\begin{equation*}
    \{x\in [0,1]\colon|T^{n}_{\beta}x-L(x)|<\varphi(n) \text{ for infinitely many } n\in\mathbb{N}\},
\end{equation*}
for a fixed base $\beta>1$, where $L\colon [0,1]\rightarrow[0,1]$ is a Lipschitz function. This strengthens Boshernitzan's work on quantitative recurrence properties in $\beta$-expansion.
\end{abstract}

\maketitle
\section{Introduction}

The properties of the expansions of one real number under different bases is a long standing topic in number theory. Though many great achievements have been established, still many have not been well understood (see the monograph of Bugeaud \cite{Bu} for the achievements and also lots of unsolved questions). In this paper, we focus on the Diophantine properties of the orbit of a fixed point in its expansions under a continuum many bases, i.e. under beta-transformations for bases $\beta\in (1,\infty)$.

For any $\beta>1$, the beta-transformation $T_{\beta}\colon [0,1]\rightarrow[0,1]$ is defined by
\begin{equation}\label{f1}
    T_{\beta}(x)=\beta x-\lfloor\beta x\rfloor \quad\text{ for all } x\in [0,1],
\end{equation}
where $\lfloor\cdot\rfloor$ denotes the integral part of a real number. The beta-transformation $T_{\beta}$ introduced by R\'{e}nyi \cite{Re} not only stands as a model for expanding real numbers in non-integer bases, but also provides various styles of non-Markovian symbolic dynamical systems, e.g.,
subshift of finite type, sofic system, specified system, synchronizing system etc. \cite{Bl, Sc} as $\beta$ varies.

For a fixed point $x\in (0,1]$, its orbits under beta-transformations may have completely different distributions on $[0,1]$ when $\beta$ varies, and may reflect the essential nature of the corresponding system. For example, the collection of all bases $\{\beta\in\mathbb{R}\colon \beta>1\}$ (we also call it {\em the parameter space} as in the literature) is classified according to the distributions of the orbits $\mathcal{O}_{\beta}:=\{T^{n}_{\beta}1\colon n\ge 1\}$ of 1 by Blanchard \cite{Bl}:

Class $C_{1}$:  $\mathcal{O}_{\beta}$ is ultimately zero.

Class $C_{2}$: $\mathcal{O}_{\beta}$ is ultimately non-zero periodic.

Class $C_{3}$: $\mathcal{O}_{\beta}$ is an infinite set but $0$ is not an accumulation point of $\mathcal{O}_{\beta}$.

Class $C_{4}$: $0$ is an accumulation point of $\mathcal{O}_{\beta}$ but $\mathcal{O}_{\beta}$ is not dense in $[0,1]$.

Class $C_{5}$: $\mathcal{O}_{\beta}$ is dense in $[0,1]$.

The Diophantine property of the orbit of a general fixed point under bases $\{\beta\in\mathbb{R}\colon \beta>1\}$ is initialed in the work of Schmeling \cite{Sc} where it was shown that
\begin{theorem}[Schmeling, \cite{Sc}] For any initial point $x\in(0,1]$, its orbit under beta-transformation is dense in $[0,1]$ for $\mathcal{L}$-almost every $\beta>1$. That is, for any  $x\in (0,1]$ and $x_{0}\in[0,1]$,
\begin{equation}\label{E:xx0=1}
    \liminf_{n\rightarrow\infty}|T^{n}_{\beta}x-x_{0}|=0 \quad\text{ for } \mathcal{L}\text{-a.e. } \beta>1,
\end{equation}
where $\mathcal{L}$ means the Lebesgue measure.
\end{theorem}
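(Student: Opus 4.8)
The plan is to fix $x\in(0,1]$ and $x_{0}\in[0,1]$ and argue for one target at a time; since a countable intersection of full-measure sets is full, it suffices to prove that for each fixed $\varepsilon=1/k$ the set
\[
E_{\varepsilon}:=\bigl\{\beta>1:\ |T^{n}_{\beta}x-x_{0}|<\varepsilon\ \text{ for infinitely many }n\bigr\}
\]
has full measure, and then to intersect over $k\in\mathbb{N}$. I carry out the whole argument on an arbitrary compact parameter interval $[a,b]\subset(1,\infty)$, working with $\mathcal{L}$ restricted to $[a,b]$. The central objects are the \emph{parameter-space cylinders}: for a word $w=(w_{1},\dots,w_{n})$, let $I_{n}(w)$ be the set of $\beta\in[a,b]$ for which the $T_{\beta}$-digit sequence of $x$ begins with $w$. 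On $I_{n}(w)$ the map $\psi_{n}(\beta):=T^{n}_{\beta}x$ equals the polynomial $\beta^{n}x-\sum_{i=1}^{n}w_{i}\beta^{n-i}$, hence is real-analytic and, on the relevant branch, strictly monotone in $\beta$. I call $I_{n}(w)$ \emph{full} if $\psi_{n}$ maps it bijectively onto $[0,1)$.

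I would first record two quantitative facts about full cylinders, which form the analytic heart advertised in the abstract. The first is a bounded-distortion estimate: since on a full cylinder $\psi_{n}$ is a monotone analytic bijection onto $[0,1)$, a standard control of the variation of $\log|\psi_{n}'|$ for these polynomial branches yields $\sup_{I_{n}(w)}|\psi_{n}'|\leq C\inf_{I_{n}(w)}|\psi_{n}'|$ with $C$ independent of $n$ and $w$. By the change of variables $\beta\mapsto\psi_{n}(\beta)$, this gives, for the $\varepsilon$-neighbourhood of $x_{0}$,
\[
\mathcal{L}\bigl(\{\beta\in I_{n}(w):|T^{n}_{\beta}x-x_{0}|<\varepsilon\}\bigr)\geq c\,\varepsilon\,\mathcal{L}(I_{n}(w)),
\]
with $c>0$ absolute. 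The second fact, which I expect to be the principal obstacle, is that full cylinders are \emph{locally abundant}: there exists $c_{0}>0$ such that for every subinterval $I\subseteq[a,b]$ and every $n$ large (depending on $I$),
\[
\sum_{\substack{I_{n}(w)\subseteq I\\ I_{n}(w)\ \text{full}}}\mathcal{L}(I_{n}(w))\geq c_{0}\,\mathcal{L}(I),\qquad\text{while}\qquad\max_{w}\mathcal{L}(I_{n}(w))\to 0.
\]
Proving this requires the detailed combinatorial description of how full cylinders are distributed in the base space, together with a control of the non-full boundary cylinders arising from the admissibility constraints; the dependence on a general initial point $x$ (rather than the classical $x=1$) is precisely what makes the counting delicate here.

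Granting these two facts, I finish by a Lebesgue density argument rather than a direct Borel--Cantelli, so as to avoid proving any independence. Fix $N_{0}\in\mathbb{N}$ and set $F_{N_{0}}=\{\beta\in[a,b]:|T^{n}_{\beta}x-x_{0}|\geq\varepsilon\ \text{for all }n\geq N_{0}\}$, so that $[a,b]\setminus E_{\varepsilon}=\bigcup_{N_{0}}F_{N_{0}}$. Suppose $\mathcal{L}(F_{N_{0}})>0$ and let $\beta^{*}$ be a Lebesgue density point of $F_{N_{0}}$; choose an interval $I\ni\beta^{*}$ with $\mathcal{L}(F_{N_{0}}\cap I)\geq(1-\tfrac{c c_{0}\varepsilon}{2})\mathcal{L}(I)$. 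Taking a generation $n>N_{0}$ so large that the cylinders are finer than $I$ and the local abundance bound holds, the full cylinders contained in $I$ cover at least $c_{0}\mathcal{L}(I)$, and inside each such cylinder the good set on which $|T^{n}_{\beta}x-x_{0}|<\varepsilon$ (with $n>N_{0}$, hence disjoint from $F_{N_{0}}$) has measure at least $c\varepsilon$ times the cylinder length. Summing over these disjoint cylinders gives $\mathcal{L}(I\setminus F_{N_{0}})\geq c\varepsilon c_{0}\mathcal{L}(I)$, contradicting the density bound. Hence $\mathcal{L}(F_{N_{0}})=0$ for every $N_{0}$, so $E_{\varepsilon}$ is full; intersecting over $\varepsilon=1/k$ yields $\liminf_{n\to\infty}|T^{n}_{\beta}x-x_{0}|=0$ for $\mathcal{L}$-a.e. $\beta>1$, as required.
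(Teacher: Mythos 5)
Your overall architecture is sound and, in fact, runs parallel to the machinery this paper builds for the divergence part of Theorem \ref{T:main} (of which the statement at hand is the special case $x_n\equiv x_0$, $\varphi\equiv\varepsilon$): parameter cylinders, the monotone polynomial branch $f_w(\beta)=\beta^n\bigl(x-\sum_{i=1}^n w_i\beta^{-i}\bigr)$, and a distortion estimate on full cylinders. Your Claim 1 is essentially Proposition \ref{P:Iwp/Iw}, and its proof goes through exactly as you sketch, via $f_w'(\beta)\ge x\beta^{n-1}$, $f_w''(\beta)\le n^2x\beta^{n-2}$ and the length bound $|I(w)|\le x^{-1}(\overline{\beta}(w))^{1-n}$ from Lemma \ref{L:I(w)}; the only caveat is that the distortion constant is uniform only for $n\ge n_0([a,b],x)$, which is harmless since you send $n\to\infty$ anyway. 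Moreover, for a \emph{fixed} $\varepsilon$ your Lebesgue-density finish is a legitimate and arguably cleaner substitute for the paper's Chung--Erd\"{o}s-plus-Knopp route: a single generation of good sets extracts a proportion $c\varepsilon c_0$ bounded away from zero, so no second-moment argument across generations is needed (the paper needs one only because for general $\varphi$ with $\varphi(n)\to 0$ each generation contributes a vanishing proportion).

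The genuine gap is exactly where you flag it: Claim 2, the locally uniform positive measure proportion $c_0$ of full cylinders, is asserted and deferred, and it is neither standard nor obtainable from what was known before this paper --- it \emph{is} the paper's main technical contribution. The only prior distribution result valid for a general initial point $x\in(0,1]$ is item $(iii)$ of Lemma \ref{L:Cful}: among every $n+1$ consecutive parameter cylinders of order $n$, at least one is full. Combined with the length bounds (Proposition \ref{P:Cfulen} against Lemma \ref{L:I(w)}, with the ratio $(\overline{\beta}/\underline{\beta})^n$ controlled as in Proposition \ref{P:ov/un<3}), this yields full cylinders carrying only a proportion of order $1/n$ of the measure of $I$, which tends to $0$; your contradiction requires an $n$-independent $c_0$ while $n\to\infty$, so the fixed density threshold $1-\tfrac{cc_0\varepsilon}{2}$ is never beaten and the argument does not close from that input. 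What must actually be proved, and occupies Sections 3--6 here, is: (i) for fixed $\beta$, full words occupy a positive proportion of admissible words, $\#\Xi_n(\beta)\ge\lambda\#\Sigma_n(\beta)$ (Proposition \ref{P:numperf}); (ii) a transfer of these into the parameter space, namely $w\in\Lambda_n(x)$ and $v\in\Xi_m(\underline{\beta}(w))$ imply $wv\in\Lambda_{n+m}(x)$ (Lemmas \ref{L:winu} and \ref{L:CcC}); (iii) the lower bound $|I(w)|\ge(\underline{\beta}(w)-1)^2(\overline{\beta}(w))^{-1-n}$ for full parameter cylinders (Proposition \ref{P:Cfulen}); and (iv) the elimination of the \emph{bad} words $w$ with $w0^{q-1}1\notin\Omega_{2q}(x)$, whose cylinders do not subdivide at all up to stage $2q$ (Lemma \ref{L:0k1}) and whose total measure is at most $|I(\tau)|/q^2$ (Proposition \ref{P:1/n2}) --- this last point also answers your implicit use of ``the cylinders are finer than $I$'', which fails for bad cylinders. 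Assembled as in Steps 1--4 of the proof of Proposition \ref{P:geq}, these ingredients do produce your $c_0$, uniformly over cylinders in a compact window (and you should run the density argument over shrinking cylinders containing the density point, rather than arbitrary intervals, to avoid boundary cylinders). As written, your proposal is a correct reduction of the theorem to Claim 2, but since Claim 2 is the heart of the matter, it is not yet a proof.
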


In this paper, following the work of Schmeling, we consider further the convergence speed in (\ref{E:xx0=1}). Let $\{x_{n}\}_{n\geq 1}\subset [0,1]$ be a sequence of real numbers and $\varphi\colon \mathbb{N}\rightarrow (0,1]$ be a positive function. Fix $x\in(0,1]$. In analogy with the classical Diophantine approximation \cite{Khint}, we define
\begin{equation*}
    E_{x}(\{x_{n}\},\varphi)=\{\beta>1\colon |T^{n}_{\beta}x-x_{n}|<\varphi(n) \textrm{ for infinitely many } n\in\mathbb{N}\}.
  \end{equation*}
  At the current stage, the main contributions on the size of $ E_{x}(\{x_{n}\},\varphi)$ is as follows:
  \begin{itemize}
    \item Hausdorff dimension \begin{itemize}
      \item  for $x=1$ and $x_n\equiv 0$ by Persson \& Schmeling \cite{PeSc},
      \item for $x=1$ and $x_n\equiv y\in [0,1]$ by Li et. al. \cite{LiPeWaWu},
      \item for all $x\in (0,1]$ and $\{x_{n}\}_{n\geq 1}$ by L\"{u} \& Wu \cite{LuWu} . 
    \end{itemize}
    \item Lebesgue measure \begin{itemize}
      \item for $x=1$ and $x_n\equiv 0$ by L\"{u} \& Wu \cite{LuWu2}.
    \end{itemize}
  \end{itemize}
So the Lebesgue measure of $E_{x}(\{x_{n}\},\varphi)$ for all $x\in (0,1]$ and any general sequences $x_n\equiv 0$ is wanted. It should also be remarked that by the ideas of a transference principle presented in \cite{LuWu}, the Lebesgue measure would (almost) imply the dimensional theory. So a complete Lebesgue measure would complete the metric theory of the orbit of a fixed point under the expansions for all bases $\{\beta>1\}$.


We would like to make several remarks about the differences between the special case ($x=1$ and $x_{n}\equiv 0$) and the general case:
\begin{enumerate}
\item To guarantee that the two points $T^{n}_{\beta}x$ and $x_{n}$ are close enough, a natural idea is to require that their beta-expansions (see Section $2$) have a sufficiently long common prefix  \cite{LiPeWaWu}. If $x_{n}=0$ for all $n\in\mathbb{N}$, since the beta-expansions of $0$ are the same (all digits are $0$) no matter what $\beta$ is, we only need to
consider those $\beta$ for which the beta-expansion of $T^{n}_{\beta}x$ begins with a sufficiently long string of zeros. But, in the general case, the beta-expansions of $x_{n}$ under different bases $\beta$ are different. Since $\beta$ is varying all the time, it is hard to get any information for the expansion of $x_n$. So the idea in \cite{LuWu2} cannot be used here.

\item On every cylinder $I(w)$ of order $n$ in the parameter space $\{\beta\in\mathbb{R}\colon \beta>1\}$ (see Section $4$), since the function $f(\beta):=T^{n}_{\beta}x$ is continuous and strictly increasing, one knows that $f(I(w))$ is an interval $[0,t)$ starting from $0$. So, the set $$\{\beta\in I(w)\colon |T^{n}_{\beta}x-0|<\varphi(n)\}$$ is always nonempty. However, when the target $0$ is changed to be $x_n$, we do not know how large $t$ would be. Thus, possibly
 the set
      $$\{\beta\in I(w)\colon |T^{n}_{\beta}x-x_n|<\varphi(n)\}$$
is empty. Therefore, we have to focus on {\em nice} cylinders, e.g., {\em full cylinders}. This idea is possible only when full cylinders take up at least a positive proportion among all cylinders, otherwise we neglect so much. However, the current knowledge (see $(iii)$ in Lemma \ref{L:Cful}) is not sufficient.

\item The criterion of whether a sequence $(\varepsilon_1, \varepsilon_2,\cdots)$ is the beta-expansion of $x\in(0,1]$ under some base $\beta>1$ depends heavily on the expansion of the unit $1$ under the same base (see Lemma \ref{L:admin}). When $x=1$, Lemma \ref{L:admin} provides a necessary and sufficient criterion so that we only need to compare the sequence $(\varepsilon_1, \varepsilon_2,\cdots)$ with its shifts. While for a fixed $x\in (0,1)$,
      there is by no means to get any explicit information from the expansion of $x$ to the expansion of 1, nor the inverse direction. So there is no general criterion of which sequence can be the expansion of $x$ under some base $\beta>1$.
\end{enumerate}

It seems hopeless to formulate some principles to overcome the difficulties arising in (1) and (3), so we have to go around to find some other way out. The notation {\em full cylinder} introduced by Dajani \& Kraaikamp \cite{DaKr} plays an important role in the afore mentioned works. However, the current knowledge
for full cylinders is inadequate to get the Lebesgue measure of $E_{x}(\{x_{n}\},\varphi)$. So some substantial materials have to be established.
We discover
that full cylinders take up a positive proportion among all cylinders (see Proposition \ref{P:numperf}).
We also find that there is a close relation between the cylinders in beta-expansion for a fixed $\beta$ and that in the
parameter space $\{\beta\in \mathbb{R}\colon \beta>1\}$ (see Lemma \ref{L:winu} and Lemma \ref{L:CcC}), which gives us effective control on the number of (full) cylinders in the parameter space.
These facts open up the step towards a complete characterization on the
Lebesgue measure of the set $E_{x}(\{x_{n}\},\varphi)$ for any $x\in (0,1]$ and $\{x_n\}_{n\geq 1}\subset [0,1]$. We prove that

\begin{theorem}\label{T:main}
  Let $\{x_{n}\}_{n\geq 1}$ be a sequence of points in $[0,1]$ and $\varphi\colon \mathbb{N}\rightarrow (0,1]$ be a positive function. Then for any $x\in(0,1]$, the set $ E_{x}(\{x_{n}\},\varphi)$ is of zero or full Lebesgue measure in $(1,+\infty)$ according to $\sum \varphi(n)<+\infty$ or not.
\end{theorem}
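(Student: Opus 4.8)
The plan is to realize $E_{x}(\{x_{n}\},\varphi)$ as a $\limsup$ set and to establish the zero--full dichotomy through the two halves of a Borel--Cantelli argument carried out in the parameter space. Writing
\[
A_{n}=\{\beta>1\colon |T^{n}_{\beta}x-x_{n}|<\varphi(n)\},
\]
we have $E_{x}(\{x_{n}\},\varphi)=\limsup_{n}A_{n}$. Since the statement concerns full or null measure, and both properties are local with respect to the cylinder filtration of $\{\beta>1\}$, I would fix an arbitrary bounded window $[1,M]$ and work cylinder by cylinder. The two structural facts I would lean on throughout are: (i) on each parameter cylinder $I(w)$ of order $n$ the map $f(\beta):=T^{n}_{\beta}x$ is continuous, strictly increasing, and maps $I(w)$ onto an interval $[0,t_{w})$, with $t_{w}=1$ precisely when $I(w)$ is \emph{full}; and (ii) Proposition \ref{P:numperf}, that full cylinders occupy a positive proportion at every order. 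Combined with a bounded-distortion estimate for $f$ on $I(w)$ (so that $f'\asymp t_{w}/|I(w)|$, read off from Lemma \ref{L:Cful} and Lemmas \ref{L:winu}, \ref{L:CcC}), these give the basic measure estimate $\mathcal{L}(A_{n}\cap I(w))\asymp \varphi(n)\,|I(w)|$ on full cylinders.

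For the convergence half ($\sum\varphi(n)<\infty$) I would prove the crude bound $\mathcal{L}(A_{n}\cap[1,M])\lesssim \varphi(n)$. The point is that $g_{n}(\beta):=T^{n}_{\beta}x$ is piecewise monotone with one increasing branch per order-$n$ cylinder, so
\[
\mathcal{L}(A_{n}\cap[1,M])=\int_{x_{n}-\varphi(n)}^{x_{n}+\varphi(n)}\ \sum_{\beta\in g_{n}^{-1}(y)\cap[1,M]}\frac{1}{g_{n}'(\beta)}\,dy,
\]
and the distortion control together with the counting of (full and non-full) cylinders furnished by Lemmas \ref{L:winu} and \ref{L:CcC} bounds the inner sum uniformly in $y$ by a constant depending only on $M$. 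Summing over $n$ and applying the first Borel--Cantelli lemma yields $\mathcal{L}(E_{x}(\{x_{n}\},\varphi)\cap[1,M])=0$ for every $M$, hence $\mathcal{L}(E_{x}(\{x_{n}\},\varphi))=0$.

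For the divergence half ($\sum\varphi(n)=\infty$) I would show that $E_{x}(\{x_{n}\},\varphi)$ is \emph{locally} of positive density and then promote this to full measure by the Lebesgue density theorem. Concretely, I would fix an arbitrary cylinder $J$ and prove the uniform lower bound $\mathcal{L}(E_{x}(\{x_{n}\},\varphi)\cap J)\ge c\,|J|$ with $c>0$ independent of $J$. The first moment is easy: restricting to the full descendants of $J$ of order $m$, which cover a fixed proportion of $J$ by Proposition \ref{P:numperf}, gives $\mathcal{L}(A_{m}\cap J)\gtrsim \varphi(m)\,|J|$, so $\sum_{m}\mathcal{L}(A_{m}\cap J)=\infty$. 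To run a quantitative (Kochen--Stone / Paley--Zygmund) Borel--Cantelli lemma I then need the quasi-independence second-moment estimate
\[
\sum_{m,m'\le N}\mathcal{L}(A_{m}\cap A_{m'}\cap J)\ \lesssim\ \Big(\sum_{m\le N}\mathcal{L}(A_{m}\cap J)\Big)^{2},
\]
after which $\mathcal{L}(\limsup_{m}A_{m}\cap J)\ge c\,|J|$ with $c$ uniform, and the density theorem forces $E_{x}(\{x_{n}\},\varphi)$ to have full measure in $(1,+\infty)$.

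The hard part will be the second-moment estimate, that is, controlling $\mathcal{L}(A_{m}\cap A_{m'}\cap J)$ for $m<m'$. Here one must understand how the higher-order event $A_{m'}$ redistributes inside the order-$m$ cylinders on which $A_{m}$ already holds, and this is exactly where the passage between the fixed-$\beta$ cylinders and the parameter-space cylinders (Lemmas \ref{L:winu}, \ref{L:CcC}) and the positive proportion of full cylinders (Proposition \ref{P:numperf}) must be combined: on a positive proportion of the order-$m'$ full subcylinders of a given order-$m$ cylinder, $A_{m'}$ again occupies a $\varphi(m')$-proportion, essentially independently of the constraint at level $m$. The two genuine difficulties I anticipate are obtaining this refinement uniformly (the map $\beta\mapsto T^{m'}_{\beta}x$ must be controlled relative to the order-$m$ partition, not merely on a single full cylinder), and keeping all distortion constants independent of $m,m'$ and $J$, so that the resulting constant $c$ in the density bound does not degenerate.
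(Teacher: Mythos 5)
Your overall architecture (limsup set, first Borel--Cantelli for convergence, a quantitative second-moment Borel--Cantelli plus a local-to-global device for divergence) is indeed the paper's architecture, but both of your central estimates have genuine gaps, and they are exactly the two points the paper has to work hardest to establish. In the convergence half, your claim that the weighted preimage count $\sum_{\beta\in g_{n}^{-1}(y)}1/g_{n}'(\beta)$ is uniformly bounded in $y$ is unjustified and is precisely the crux: for $y$ near $0$ \emph{every} order-$n$ parameter cylinder $I(v)$ with $\underline{\beta}(v)>1$ contributes a preimage (each image $J(v)$ is an interval starting at $0$), with weight comparable to $(\underline{\beta}(v))^{1-n}$, and for \emph{bad} cylinders --- those $w$ with $w0^{n-1}1\notin\Omega_{2n}(x)$, for which $I(w)=I(w0^{n})$ by Lemma \ref{L:0k1} and hence $|I(w)|\leq x^{-1}(\underline{\beta}(w))^{1-2n}$ --- this weight is exponentially larger than $|I(w)|$, so the weighted count cannot be compared to Lebesgue measure cylinder by cylinder; a strip-counting argument over $\underline{\beta}(v)\in[\beta_N,M]$ only yields a bound of order $n$, giving $\mathcal{L}(A_{n}\cap[1,M])\lesssim n\varphi(n)$, which fails whenever $\sum\varphi(n)<\infty$ but $\sum n\varphi(n)=\infty$. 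The paper circumvents this by localizing to a common prefix of order $q=\lfloor n/2\rfloor$ so that $(\overline{\beta}(w)/\underline{\beta}(w))^{n}$ is bounded (Proposition \ref{P:ov/un<3}), splitting into good and bad words, showing the bad cylinders have total \emph{length} at most $|I(u)|/q^{2}$ (Proposition \ref{P:1/n2}), and for good words extracting a genuine lower bound on $|I(w)|$ from fullness of $w0^{k_{0}+1}$ via Proposition \ref{P:Cfulen}; the resulting estimate is $\mathcal{L}(\textbf{E}_{n}(x)\cap I(u))\leq(C_{u}\varphi(n)+9/n^{2})|I(u)|$, i.e.\ with an additive summable error, not the clean $\lesssim\varphi(n)$ you assert. (Your window $[1,M]$ also includes a neighborhood of $\beta=1$ where every constant degenerates; the paper instead exhausts $(1,\infty)$ by $[\beta_{N},+\infty)$ with $\beta_{N}\searrow 1$.)

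In the divergence half, your first-moment bound misreads Proposition \ref{P:numperf}: that proposition counts full words in a \emph{fixed} base $\beta$, and a positive proportion \emph{by count} of full parameter cylinders does not give a positive proportion of Lebesgue \emph{measure} of $J$, since non-full cylinders can dominate in number while full ones must be shown to carry length. The paper needs the transfer lemma \ref{L:CcC} (full words in base $\underline{\beta}(w)$ extend full parameter cylinders), the length lower bound of Proposition \ref{P:Cfulen}, and once more the good/bad decomposition with the $I(\varpi)=I(\varpi 0^{k_{0}})$ trick (Steps 1--4 of Proposition \ref{P:geq}) to get $\mathcal{L}(F_{l}(\tau;\varphi))\geq D_{u}\varphi(l)|I(\tau)|$. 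More seriously, the second-moment estimate --- which you correctly flag as the hard part --- is left without a mechanism, and for the raw events $A_{m}\cap J$ it is not established in the paper either; instead the paper proves quasi-independence only for trimmed sub-events $F_{l}(\tau;\varphi)$ supported on full cylinders, where for each full $w$ of order $n$ the two lexicographically extreme order-$l$ words meeting $I(w;\varphi)$ are deleted (the sets $\mathbb{H}^{\ast}_{l}(w)$ in $(\ref{E:Bltau})$), forcing every surviving $I(v)\subset I(w;\varphi)$ and hence $\sum_{v}|I(v)|\leq|I(w;\varphi)|$; combined with Proposition \ref{P:Cfulen} this yields $\mathcal{L}(F_{n}\cap F_{l})\leq D_{u}K_{u}\varphi(l)\mathcal{L}(F_{n})$ for $l\geq n+c_{u}$, with the near-diagonal pairs $n<l<n+c_{u}$ handled separately inside the Chung--Erd\"{o}s application. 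Your endgame via the density theorem is fine in spirit (the paper uses Knopp's lemma to the same effect), but without the measure transfer and the boundary-trimming device, both pillars of the dichotomy remain unproven in your proposal.
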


Specifying $x_n=y\in [0,1]$ for all $n\in \mathbb{N}$, and using the two dimensional Fubini's theorem on the set of $(\beta, y)$'s,
we can obtain the following corollary, which strengthens (\ref{E:xx0=1}) significantly.
\begin{corollary} Let $x\in (0,1]$. For $\mathcal{L}$-almost all $\beta>1$, the set
$$\Big\{y\in [0,1]: |T^n_{\beta}x-y|<\varphi(n) \text{ for infinitely many } n\in \mathbb{N}\Big\}
$$ is of Lebesgue measure $0$ or $1$ according to
$$
\sum_{n=1}^{\infty}\varphi(n)<\infty \text{ or } =\infty.
$$
\end{corollary}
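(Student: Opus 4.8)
The plan is to deduce the corollary from Theorem~\ref{T:main} by a Fubini/Tonelli argument applied to the planar set
\begin{equation*}
  F=\Big\{(\beta,y)\in(1,\infty)\times[0,1]\colon |T^{n}_{\beta}x-y|<\varphi(n)\text{ for infinitely many }n\in\mathbb{N}\Big\}.
\end{equation*}
For a \emph{fixed} target $y$, the vertical section $F_{y}:=\{\beta>1\colon(\beta,y)\in F\}$ is exactly the set $E_{x}(\{x_{n}\},\varphi)$ attached to the constant sequence $x_{n}\equiv y$, which is a legitimate special case of Theorem~\ref{T:main}. Hence, for \emph{every} $y\in[0,1]$, the section $F_{y}$ has zero or full Lebesgue measure in $(1,\infty)$ according as $\sum\varphi(n)<\infty$ or $=\infty$. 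The corollary should then follow by transferring this information from the vertical sections to the horizontal sections $\{y\colon(\beta,y)\in F\}$.

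Before invoking Fubini I would verify that $F$ is a measurable subset of the product. For each fixed $n$ the map $\beta\mapsto T^{n}_{\beta}x$ is piecewise continuous (it is continuous and strictly increasing on each cylinder of the parameter space, as recalled in the introduction), hence Borel; therefore $(\beta,y)\mapsto|T^{n}_{\beta}x-y|$ is jointly measurable and
\begin{equation*}
  F=\bigcap_{N\ge 1}\bigcup_{n\ge N}\big\{(\beta,y)\colon|T^{n}_{\beta}x-y|<\varphi(n)\big\}
\end{equation*}
is measurable as a $\limsup$ set. Since $(1,\infty)$ equipped with Lebesgue measure is $\sigma$-finite, Tonelli's theorem applies to the nonnegative indicator $\mathbbm{1}_{F}$, and both iterated integrals compute $\mathcal{L}\times\mathcal{L}(F)$.

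In the convergence case $\sum\varphi(n)<\infty$, every vertical section $F_{y}$ is null, so integrating first in $\beta$ gives $\mathcal{L}\times\mathcal{L}(F)=0$; integrating in the other order then forces the horizontal section $\{y\colon(\beta,y)\in F\}$ to be null for $\mathcal{L}$-almost every $\beta$, which is the convergence half. For the divergence case $\sum\varphi(n)=\infty$ I would run the identical scheme on the complement $F^{c}=\big((1,\infty)\times[0,1]\big)\setminus F$: now each vertical section $(F^{c})_{y}$ is null because $F_{y}$ is full, so $\mathcal{L}\times\mathcal{L}(F^{c})=0$, and Tonelli yields that $\{y\colon(\beta,y)\in F^{c}\}$ is null for almost every $\beta$; equivalently the horizontal section $\{y\colon(\beta,y)\in F\}$ has full measure $1$ in $[0,1]$ for almost every $\beta$. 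The only points demanding care are the joint measurability of $F$ and the correct handling of the complement in the divergence case; neither is a genuine obstacle, so once Theorem~\ref{T:main} is available this corollary is a soft, purely measure-theoretic consequence.
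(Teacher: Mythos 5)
Your proposal is correct and is essentially the paper's own argument: the paper derives the corollary in one line by specializing Theorem \ref{T:main} to the constant sequence $x_n\equiv y$ and then applying the two-dimensional Fubini theorem to the set of pairs $(\beta,y)$, exactly as you do with the planar set $F$. Your extra care about joint measurability of $F$ and the passage to the complement $F^{c}$ in the divergence case (needed because the sections $F_y$ live in the infinite-measure space $(1,\infty)$, so one applies Tonelli to $\mathbbm{1}_{F^{c}}$ rather than to $\mathbbm{1}_{F}$) simply fills in details the paper leaves implicit.
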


The following set is a variant of ${E}_{x}(\{x_{n}\},\varphi)$. Let $\{l_{n}\}_{n\geq 1}$ be a sequence of non-negative real numbers. For any $x\in(0,1]$, define
\begin{equation*}
    \mathcal{E}_{x}(\{x_{n}\},\{l_{n}\})=\{\beta>1\colon |T^{n}_{\beta}x-x_{n}|<\beta^{-l_{n}} \text{ for infinitely many } n\in\mathbb{N}\}.
\end{equation*}
As a corollary of Theorem \ref{T:main}, we can determine the exact Lebesgue measure of the set $\mathcal{E}_{x}(\{x_{n}\},\{l_{n}\})$ for all $x\in(0,1]$. Let
$$\beta^{\star}=\inf\{\beta>1\colon \sum\beta^{-l_{n}}<+\infty\}=\sup\{\beta>1\colon \sum\beta^{-l_{n}}=+\infty\},$$
where we define $\inf\varnothing=+\infty$ and $\sup\varnothing=1$ for the empty set $\varnothing$.

\begin{corollary}\label{T:ln}
  Let $\{x_{n}\}_{n\geq 1}\subset [0,1]$ and $\{l_{n}\}_{n\geq 1}\subset [0,+\infty)$ be two sequences of real numbers. Then for any $x\in(0,1]$,
$$\mathcal{L}(\mathcal{E}_{x}(\{x_{n}\},\{l_{n}\}))=\beta^{\star}-1.$$
\end{corollary}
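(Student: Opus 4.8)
The plan is to deduce the corollary from Theorem \ref{T:main} by a localization-and-monotonicity argument, the point being that the only obstruction to applying the theorem verbatim is that the approximating function $\beta^{-l_n}$ depends on the base $\beta$. The first observation I would record is that, for each fixed $n$, the quantity $\beta^{-l_n}$ is non-increasing in $\beta$ (strictly decreasing when $l_n>0$), so the series $\sum_n \beta^{-l_n}$ is a non-increasing function of $\beta$. Consequently $\{\beta>1\colon\sum\beta^{-l_n}<\infty\}$ is an up-set and $\{\beta>1\colon\sum\beta^{-l_n}=\infty\}$ is a down-set, whence both expressions defining $\beta^{\star}$ agree and for every $\beta>\beta^{\star}$ the series converges while for every $\beta<\beta^{\star}$ it diverges. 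The conventions $\inf\varnothing=+\infty$ and $\sup\varnothing=1$ take care of the degenerate cases $\beta^{\star}=+\infty$ and $\beta^{\star}=1$.

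For the convergence (upper) bound I would show $\mathcal{L}(\mathcal{E}_{x}(\{x_{n}\},\{l_{n}\})\cap(\beta^{\star},\infty))=0$. Fix any $\beta_{1}>\beta^{\star}$ and set $\varphi_{1}(n)=\beta_{1}^{-l_{n}}$; since $l_{n}\geq 0$ and $\beta_{1}>1$ this is a positive function into $(0,1]$, and $\sum_{n}\varphi_{1}(n)<\infty$ because $\beta_{1}>\beta^{\star}$. For every $\beta\geq\beta_{1}$ and every $n$ one has $\beta^{-l_{n}}\leq\beta_{1}^{-l_{n}}=\varphi_{1}(n)$, so the inequality $|T^{n}_{\beta}x-x_{n}|<\beta^{-l_{n}}$ forces $|T^{n}_{\beta}x-x_{n}|<\varphi_{1}(n)$. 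Hence $\mathcal{E}_{x}(\{x_{n}\},\{l_{n}\})\cap[\beta_{1},\infty)\subseteq E_{x}(\{x_{n}\},\varphi_{1})$, and Theorem \ref{T:main} gives $\mathcal{L}(E_{x}(\{x_{n}\},\varphi_{1}))=0$. Letting $\beta_{1}$ decrease to $\beta^{\star}$ along a countable sequence yields the claim.

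For the divergence (lower) bound I would show $\mathcal{L}(\mathcal{E}_{x}(\{x_{n}\},\{l_{n}\})\cap(1,\beta^{\star}))=\beta^{\star}-1$. Fix any $\beta_{2}<\beta^{\star}$ and set $\varphi_{2}(n)=\beta_{2}^{-l_{n}}\in(0,1]$; now $\sum_{n}\varphi_{2}(n)=\infty$. For every $\beta\leq\beta_{2}$ and every $n$ one has $\varphi_{2}(n)=\beta_{2}^{-l_{n}}\leq\beta^{-l_{n}}$, so $|T^{n}_{\beta}x-x_{n}|<\varphi_{2}(n)$ implies $|T^{n}_{\beta}x-x_{n}|<\beta^{-l_{n}}$. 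Therefore $E_{x}(\{x_{n}\},\varphi_{2})\cap(1,\beta_{2}]\subseteq\mathcal{E}_{x}(\{x_{n}\},\{l_{n}\})\cap(1,\beta_{2}]$. Because $\sum\varphi_{2}(n)=\infty$, Theorem \ref{T:main} says $E_{x}(\{x_{n}\},\varphi_{2})$ has full measure in $(1,\infty)$, so $\mathcal{L}(\mathcal{E}_{x}(\{x_{n}\},\{l_{n}\})\cap(1,\beta_{2}])=\beta_{2}-1$ (the ambient interval forces equality). Letting $\beta_{2}$ increase to $\beta^{\star}$ gives measure $\beta^{\star}-1$ on $(1,\beta^{\star})$. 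Combining the two bounds, and noting that the single point $\beta^{\star}$ is Lebesgue-negligible, yields $\mathcal{L}(\mathcal{E}_{x}(\{x_{n}\},\{l_{n}\}))=\beta^{\star}-1$.

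I would expect no genuine obstacle here: the entire content is the monotonicity of $\beta\mapsto\beta^{-l_{n}}$, which lets one sandwich the base-dependent target between base-independent targets on the sub-intervals $[\beta_{1},\infty)$ and $(1,\beta_{2}]$ and then invoke the zero–one dichotomy of Theorem \ref{T:main}. The only points demanding a little care are checking that $\varphi_{1},\varphi_{2}$ land in $(0,1]$ (guaranteed by $l_{n}\geq 0$), handling the boundary values $\beta^{\star}\in\{1,+\infty\}$ through the stated conventions, and arranging the exhaustions $\beta_{1}\downarrow\beta^{\star}$ and $\beta_{2}\uparrow\beta^{\star}$ along countable sequences so that the relevant null and full sets are preserved under countable unions.
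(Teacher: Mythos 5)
Your proposal is correct and is essentially the paper's own argument: both proofs exploit the monotonicity of $\beta\mapsto\beta^{-l_{n}}$ to sandwich the base-dependent target between the constant functions $\varphi(n)=t^{-l_{n}}$ (divergent side) and $\widetilde{\varphi}(n)=s^{-l_{n}}$ (convergent side) and then invoke Theorem \ref{T:main} locally. The only cosmetic difference is that the paper works with arbitrary compact subintervals $[s,t]$ of $(1,\beta^{\star})$ and $(\beta^{\star},+\infty)$, whereas you use the half-lines $[\beta_{1},\infty)$ and $(1,\beta_{2}]$ with a countable exhaustion as $\beta_{1}\downarrow\beta^{\star}$ and $\beta_{2}\uparrow\beta^{\star}$.
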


We remark that
  if $\beta^{\star}=+\infty$, the above equality means that the set $\mathcal{E}_{x}(\{x_{n}\},\{l_{n}\})$ is of full Lebesgue measure in $(1,+\infty)$.

\begin{remark}
The method used in the proof of Theorem \ref{T:main} can also be applied to deal with analogous problems in a fixed dynamical system.
\end{remark}

Fix $\beta>1$ and let $L\colon [0,1]\rightarrow[0,1]$ be a Lipschitz function. Define
\begin{equation*}
    \mathfrak{R}_{\beta}(L,\varphi)=\{x\in [0,1]\colon|T^{n}_{\beta}x-L(x)|<\varphi(n) \text{ for infinitely many } n\in\mathbb{N}\}.
\end{equation*}
The Hausdorff dimension of the set $\mathfrak{R}_{\beta}(L,\varphi)$ for $L(x)=x$ was given in \cite{TaWa}. Here, with the same idea used in the proof of Theorem \ref{T:main}, we can obtain its Lebesgue measure, which strengthens Boshernitzan's work on quantitative recurrence properties in $\beta$-expansion.
\begin{theorem}\label{T:rec}
  Let $L\colon [0,1]\rightarrow[0,1]$ be a Lipschitz function and $\varphi\colon \mathbb{N}\rightarrow (0,1]$ be a positive function. Then for any $\beta>1$,
  \begin{equation*}
    \mathcal{L}(\mathfrak{R}_{\beta}(L,\varphi))=\left\{
       \begin{array}{ll}
         0, & \hbox{if $\sum\varphi(n)<+\infty$;} \\
         1, & \hbox{if $\sum\varphi(n)=+\infty$.}
       \end{array}
     \right.
  \end{equation*}
\end{theorem}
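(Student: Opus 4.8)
The plan is to write $\mathfrak{R}_{\beta}(L,\varphi)=\limsup_{n}A_{n}$ with $A_{n}:=\{x\in[0,1]\colon |T^{n}_{\beta}x-L(x)|<\varphi(n)\}$, and to handle the two cases by the first and second Borel--Cantelli lemmas respectively. Throughout, let $K$ denote the Lipschitz constant of $L$, and recall that every cylinder of order $n$ has length at most $\beta^{-n}$, that there are $O(\beta^{n})$ of them, and that $T^{n}_{\beta}$ is affine with slope $\beta^{n}$ on each, mapping it onto an interval $[0,t)$. Consequently the nested cylinders containing a given point shrink to it and form a Lebesgue differentiation basis; this is what will let me upgrade a uniform local estimate to full measure in the divergent case.

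For the convergent case $\sum\varphi(n)<\infty$ I would bound $\mathcal{L}(A_{n})$ directly. Fix an order-$n$ cylinder $I$. Since $L$ is $K$-Lipschitz and $|I|\le\beta^{-n}$, the values $L(x)$ with $x\in I$ all lie in a band of width $2K\beta^{-n}$; hence $A_{n}\cap I$ is contained in the $T^{n}_{\beta}$-preimage of an interval of length $2\varphi(n)+2K\beta^{-n}$, which has measure at most $(2\varphi(n)+2K\beta^{-n})\beta^{-n}$. Summing over the $O(\beta^{n})$ cylinders gives $\mathcal{L}(A_{n})=O(\varphi(n))+O(\beta^{-n})$, so $\sum_{n}\mathcal{L}(A_{n})<\infty$ and the first Borel--Cantelli lemma yields $\mathcal{L}(\mathfrak{R}_{\beta}(L,\varphi))=0$.

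For the divergent case $\sum\varphi(n)=\infty$ the crux is a \emph{uniform local} lower estimate: I claim there is a constant $c>0$, independent of the cylinder, with $\mathcal{L}(\mathfrak{R}_{\beta}(L,\varphi)\cap J)\ge c\,\mathcal{L}(J)$ for every cylinder $J$. Granting this, if $\mathfrak{R}_{\beta}(L,\varphi)$ were not of full measure its complement would have a Lebesgue density point, and a small enough cylinder $J$ around that point would violate the claim; hence $\mathcal{L}(\mathfrak{R}_{\beta}(L,\varphi))=1$. To prove the claim I first produce a one-scale lower bound. Inside $J$ (say of order $m$), Proposition \ref{P:numperf}, applied within $J$, supplies full cylinders $I$ of order $n>m$ whose union occupies at least a fixed proportion $c_{0}$ of $J$. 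On such a full $I$ the function $g(x)=T^{n}_{\beta}x-L(x)$ runs from $-L(\cdot)\le 0$ at the left endpoint (where $T^{n}_{\beta}=0$) to $1-L(\cdot)\ge 0$ at the right and, once $\beta^{n}>K$, is strictly increasing with slope between $\beta^{n}-K$ and $\beta^{n}+K$; by the intermediate value theorem it has a zero $x^{*}$, around which $\{|g|<\varphi(n)\}$ contains an interval of length $\gtrsim\varphi(n)\beta^{-n}\asymp\varphi(n)\mathcal{L}(I)$. Summing over the full $I\subseteq J$ gives $\mathcal{L}(A_{n}\cap J)\ge c_{1}\varphi(n)\,\mathcal{L}(J)$ for all large $n$, so $\sum_{n}\mathcal{L}(A_{n}\cap J)=\infty$.

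The remaining, and hardest, ingredient is a \emph{quasi-independence} estimate of the correlations $\mathcal{L}(A_{m}\cap A_{n}\cap J)$ for $m<n$, which together with the divergence of $\sum_{n}\mathcal{L}(A_{n}\cap J)$ would let me invoke the quantitative (Kochen--Stone / Chung--Erd\H{o}s) form of the second Borel--Cantelli lemma to conclude $\mathcal{L}(\limsup_{n}A_{n}\cap J)\ge c\,\mathcal{L}(J)$, which is the claim. The moving target $L(x)$ is the genuine difficulty: $A_{m}$ is not a union of cylinders but a band cut out by the constraint $T^{m}_{\beta}x\approx L(x)$, so $A_{m}$ and $A_{n}$ are not literally independent. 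The way through is to exploit the self-similarity on full cylinders: on a full cylinder $I$ of order $m$ the map $T^{m}_{\beta}|_{I}\colon I\to[0,1)$ is an affine bijection, and it turns the restriction of $A_{n}$ to $I$ into an event of exactly the same type, now governed by the rescaled target $y\mapsto L((T^{m}_{\beta}|_{I})^{-1}y)$, whose Lipschitz constant has shrunk by the factor $\beta^{-m}$. The Lipschitz hypothesis is precisely what keeps the displacement of the target across an order-$m$ cylinder, namely $O(\beta^{-m})$, negligible against the band widths at level $n>m$, and I expect the bound $\mathcal{L}(A_{m}\cap A_{n}\cap J)\le C\,\mathcal{L}(A_{m}\cap J)\,\mathcal{L}(A_{n}\cap J)/\mathcal{L}(J)+(\text{summable error})$ to follow from this renormalization combined with the positive-proportion count for full cylinders. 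Controlling this correlation uniformly, in the presence of the $x$-dependent target, is the main obstacle, and it is exactly where the full-cylinder technology developed for Theorem \ref{T:main} is brought to bear.
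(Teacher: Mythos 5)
Your convergent half is correct and essentially coincides with the paper's (the paper bounds each $|\mathfrak{I}(w;L,\varphi)|$ by $3\varphi(n)\beta^{-n}$ via the monotonicity of $\mathfrak{f}_{w}(x)=T^{n}_{\beta}x-L(x)$; your frozen-target variant, with the extra summable $O(\beta^{-n})$ term, is equally valid). Your divergent skeleton also matches: a uniform local bound $\mathcal{L}(\mathfrak{R}_{\beta}(L,\varphi)\cap J)\geq c\,\mathcal{L}(J)$ on cylinders via Chung--Erd\H{o}s, upgraded to full measure (you use density points where the paper uses Knopp's lemma; both work, since nested cylinder partitions form a differentiation basis), and your one-scale lower bound via a zero of $g=T^{n}_{\beta}-L$ on each full cylinder is exactly the paper's lemma. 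But there is a genuine gap: the quasi-independence estimate, which you yourself flag as ``the main obstacle'' and only ``expect'' to follow from renormalization, is the entire content of the hard half, and the renormalization sketch as stated does not deliver it. The difficulty is that $A_{m}\cap I$, for $I$ a full cylinder of order $m$, is a single subinterval (a band) of $I$, not a union of level-$n$ cylinders; pulling back by the affine bijection $T^{m}_{\beta}|_{I}$ therefore does not turn $A_{m}\cap A_{n}\cap I$ into a rescaled copy of the same problem --- you end up intersecting the renormalized event with an interval that cuts through level-$(n-m)$ cylinders, and the boundary cylinders it cuts can a priori carry all of the intersection's measure. The Lipschitz-shrinking observation controls the displacement of the target but not this boundary effect.

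The paper's resolution is combinatorial rather than dynamical, and you would need it or an equivalent: one works not with $A_{l}\cap J$ but with the pruned sets $\mathfrak{F}_{l}(\tau;L,\varphi)=\bigcup_{v\in\widetilde{\mathbb{B}}_{l}(\tau)}\mathfrak{I}(v;L,\varphi)$, where $v$ ranges over full cylinders of order $l$ from which, for every intermediate order $n$ and every full $w$ of order $n$, the two lexicographically extreme elements of $\widetilde{\mathbb{H}}_{l}(w)=\{v\colon \mathfrak{I}(w;L,\varphi)\cap\mathfrak{I}(v;L,\varphi)\neq\varnothing\}$ are deleted. Since $\mathfrak{I}(w;L,\varphi)$ is an interval, every surviving $v$ meeting it satisfies $\mathfrak{I}(v)\subset\mathfrak{I}(w;L,\varphi)$, whence $\mathcal{L}(\mathfrak{I}(w;L,\varphi)\cap\mathfrak{F}_{l})\leq 3\varphi(l)\sum_{v}|\mathfrak{I}(v)|\leq 3\varphi(l)\,|\mathfrak{I}(w;L,\varphi)|$ --- an exact interval comparison with no error term --- while Proposition \ref{P:numperf} together with the choice of the gap $c$ shows the pruning removes at most half of $\widetilde{\mathbb{A}}_{l}(\tau)$, so the lower bound $\mathcal{L}(\mathfrak{F}_{l})\geq\frac{\lambda}{8}\varphi(l)|\mathfrak{I}(\tau)|$ survives. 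Only for these pruned subsets are both inputs to Chung--Erd\H{o}s available; quasi-independence of the sets $A_{m}\cap J$, $A_{n}\cap J$ themselves is never established (the mixing-based argument of \cite{HuLiSiWa} does that, but the point here is to avoid mixing). A smaller omission: your local estimate must hold on \emph{every} cylinder $J$, yet producing full cylinders inside $J$ by concatenation (item $(iii)$ of Lemma \ref{L:full}) requires $J$ itself to be full; for general $J=\mathfrak{I}(u)$ one needs the paper's reduction $\mathfrak{I}(u)=\mathfrak{I}(u0^{k_{0}})$ with $u0^{k_{0}+1}\in\Xi_{m+k_{0}+1}(\beta)$, at the harmless cost of a factor $1/\beta$.
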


\medskip

It should be also mentioned that the properties for a fixed $\beta$ have been well studied in the literature, see  \cite{Bl, Da, Ge, Ho, Pa, Ph, Sc, Th}, etc.

The rest of this paper is organized as follows: in the next section, we will introduce some notions and known results about beta-expansions. Section $3$ and Section $4$ are devoted to the study of the distribution properties of full cylinders in beta-dynamical systems and in the parameter space $\{\beta\in \mathbb{R}\colon \beta>1\}$, respectively. In Section $5$, we shall prove the convergent part of  Theorem \ref{T:main} with the help of the Borel-Cantelli lemma. After that, using the Chung-Erd\"{o}s inequality and Knopp's lemma, we deal with the divergent part of Theorem \ref{T:main} in Section $6$. The proofs of Corollary \ref{T:ln} and Theorem \ref{T:rec} are given in the last section.

\section{Preliminaries}
In this section, we fix some notions, terminologies and some known results about beta-expansions. We mainly focus on the properties of {\em full cylinder} introduced by Dajani \& Kraaikamp \cite{DaKr} which play fundamental roles in the metric theory of $\beta$-expansions both for a fixed $\beta$ and for the parameter space. For more details, the reader is referred to the papers of R\'enyi \cite{Re}, Parry \cite{Pa}, Schmeling \cite{Sc}, Persson \& Schmeling \cite{PeSc}, Li et al. \cite{LiPeWaWu}, Bugeaud \& Liao \cite{BuL} and L\"{u} \& Wu \cite{LuWu}.

\subsection{Notation}
Let $u=u_{1}\cdots u_{m}$ and $w=w_{1}\cdots w_{n}$ be two words of nonnegative integers with $m,n\in \mathbb{N}$ and $\xi=\xi_{1}\xi_{2}\cdots$
be a sequence of nonnegative integers. Denote the length of $u$ by $|u|:=m$. For any $1\leq k\leq m$, let $u|_{k}=u_{1}u_{2}\cdots u_{k}$.
Write $uw=u_{1}\cdots u_{m}w_{1}\cdots w_{n}$ and the sequence $u\xi=u_{1}\cdots u_{m}\xi_{1}\xi_{2}\cdots$ for the concatenations. Call $u$ a prefix of the word $w$ if $1\le m\le n$ and $w|_{m}=u$, and that $u$ a prefix of the sequence $\xi$ if $\xi_{1}\cdots \xi_{m}=u$. For any $k\geq 0$, denote $u^{k}$ for the concatenations of $k$ many $u$. Put $u^{\infty}=uu\cdots$ for the sequence consisting of infinitely many copies of $u$.

The lexicographical order $\prec$ between two sequences $\xi=\xi_{1}\xi_{2}\cdots$, $ \eta=\eta_{1}\eta_{2}\cdots$
of nonnegative integers is defined as follows: $\xi\prec\eta$
if there exists an integer $i_{0}\ge 0$ such that
$\xi_{i}=\eta_{i}$ for all $i\le i_{0}$ and $\xi_{i_{0}+1}<\eta_{i_{0}+1}$. The notion $\xi\preceq \eta$ means that $\xi\prec\eta$ or $\xi=\eta$. Moreover, the lexicographical order can be extended to words: for two words $u$, $w$, one says $u\prec w$ if $u0^{\infty}\prec w0^{\infty}$.

\subsection{Beta-expansion}
Now, we recall some basic properties of beta-expansion for a fixed base. Let $\beta>1$ be a real number.
By the algorithm (\ref{f1}), every point $x\in [0,1]$ can be expanded into a finite or infinite series as
\begin{align}\label{E:x=Tn}
   x&=\frac{\varepsilon_{1}(x,\beta)}{\beta}+\frac{\varepsilon_{2}(x,\beta)}{\beta^{2}}+\cdots
   +\frac{\varepsilon_{n}(x,\beta)+T^{n}_{\beta}x}{\beta^{n}}=\sum^{\infty}_{n=1}\frac{\varepsilon_{n}(x,\beta)}{\beta^{n}},
\end{align}
where $\varepsilon_{n}(x,\beta)=\lfloor\beta T^{n-1}_{\beta} x\rfloor$ for all $n\in \mathbb{N}$. For simplicity, we also call the sequence  $$\varepsilon(x,\beta):=\varepsilon_{1}(x,\beta)\varepsilon_{2}(x,\beta)\cdots\varepsilon_{n}(x,\beta)\cdots$$ as the beta-expansion of $x$ in base $\beta$.

We write $\varepsilon_{n}(\beta):=\varepsilon_{n}(1,\beta)$ and $\varepsilon(\beta):=\varepsilon(1,\beta)$
for the expansion of 1.
If the sequence $\varepsilon(\beta)$ ends with $0^{\infty}$, let $i_0$ be the smallest integer such that $\varepsilon_{i_0}(\beta)\ne 0$,
and define the sequence $\varepsilon^{\ast}(\beta)$ by $$\varepsilon^{\ast}(\beta)
=\Big(\varepsilon_{1}(\beta)\cdots\varepsilon_{i_{0}-1}(\beta)\varepsilon^{-}_{i_{0}}(\beta)\Big)^{\infty},$$ where $\varepsilon^{-}_{i_{0}}(\beta)=\varepsilon_{i_{0}}(\beta)-1$. Otherwise, we define the sequence $\varepsilon^{\ast}(\beta)$ to be the same with $\varepsilon(\beta)$.
The sequence $\varepsilon^{\ast}(\beta)$ is usually called the infinite beta-expansion of $1$ in base $\beta$.

The next proposition is about the properties of the sequence $\varepsilon^{\ast}(\beta)$.
\begin{proposition}\label{P:asteps}
$(i)$  For any $\beta>1$, we have $\varepsilon^{\ast}(\beta)\preceq\varepsilon(\beta)$, $\varepsilon^{\ast}_{1}(\beta)\geq 1$ and
\begin{equation*}
    \sum^{\infty}_{i=1}\frac{\varepsilon^{\ast}_{i}(\beta)}{\beta^{i}}=\sum^{\infty}_{i=1}\frac{\varepsilon_{i}(\beta)}{\beta^{i}}=1,
    \quad\sum^{\infty}_{i=n+1}\frac{\varepsilon^{\ast}_{i}(\beta)}{\beta^{i}}\leq \frac{1}{\beta^{n}}\quad\text{for all } n\in \mathbb{N}.
\end{equation*}
$(ii)$  For any $1<\beta_{1}<\beta_{2}$, we have $\varepsilon(\beta_{1})\prec\varepsilon^{\ast}(\beta_{2})$.
\end{proposition}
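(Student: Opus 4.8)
The plan is to treat the two cases \emph{separately} according to whether the greedy expansion $\varepsilon(\beta)$ terminates. If $\varepsilon(\beta)$ does not end with $0^{\infty}$, then $\varepsilon^{\ast}(\beta)=\varepsilon(\beta)$ and every assertion in $(i)$ reduces to a statement about $\varepsilon(\beta)$ itself; if it does end with $0^{\infty}$, with last nonzero digit at position $i_{0}$, then $\varepsilon^{\ast}(\beta)=w^{\infty}$ with $w=\varepsilon_{1}(\beta)\cdots\varepsilon_{i_{0}-1}(\beta)(\varepsilon_{i_{0}}(\beta)-1)$, and the assertions follow from summing a geometric series. Throughout I write $v_{\beta}(\xi)=\sum_{i\ge1}\xi_{i}\beta^{-i}$ for the value of a digit sequence $\xi$ in base $\beta$.

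First I would record the basic identities for the greedy expansion. Applying $(\ref{E:x=Tn})$ to $x=1$ gives $\sum_{i=1}^{n}\varepsilon_{i}(\beta)\beta^{-i}=1-\beta^{-n}T^{n}_{\beta}1$ for every $n$; letting $n\to\infty$ yields $v_{\beta}(\varepsilon(\beta))=1$, and using $T^{n}_{\beta}1\in[0,1]$ yields the tail bound $\sum_{i>n}\varepsilon_{i}(\beta)\beta^{-i}\le\beta^{-n}$ for $\varepsilon(\beta)$. This settles $(i)$ in the non-terminating case at once. In the terminating case, since $\sum_{i=1}^{i_{0}}\varepsilon_{i}(\beta)\beta^{-i}=1$, the block $w$ satisfies $\sum_{i=1}^{i_{0}}w_{i}\beta^{-i}=1-\beta^{-i_{0}}$, whence summing over the periods gives $v_{\beta}(\varepsilon^{\ast}(\beta))=(1-\beta^{-i_{0}})/(1-\beta^{-i_{0}})=1$. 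For the tail bound I would compute, for $n=qi_{0}+s$ with $0\le s<i_{0}$, that $\sum_{i=1}^{qi_{0}}\varepsilon^{\ast}_{i}(\beta)\beta^{-i}=1-\beta^{-qi_{0}}$, and reduce the claim $\sum_{i>n}\varepsilon^{\ast}_{i}(\beta)\beta^{-i}\le\beta^{-n}$ to the inequality $\sum_{i=1}^{s}\varepsilon_{i}(\beta)\beta^{-i}\ge 1-\beta^{-s}$, which is exactly the greedy tail bound already proved. The relation $\varepsilon^{\ast}(\beta)\preceq\varepsilon(\beta)$ is then immediate (equality in the first case; in the second the sequences first differ at position $i_{0}$, where $\varepsilon^{\ast}$ is smaller), and $\varepsilon^{\ast}_{1}(\beta)\ge1$ follows from $\varepsilon_{1}(\beta)=\lfloor\beta\rfloor\ge1$, the only delicate point being $i_{0}=1$, which forces $\beta$ to be an integer $\ge2$ and hence $\varepsilon^{\ast}_{1}(\beta)=\beta-1\ge1$.

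For $(ii)$ I would argue by contradiction, evaluating everything in the \emph{larger} base $\beta_{2}$ and exploiting the tail bound from $(i)$. Suppose $\varepsilon^{\ast}(\beta_{2})\preceq\varepsilon(\beta_{1})$. If the two sequences are equal, then $1=v_{\beta_{2}}(\varepsilon^{\ast}(\beta_{2}))=v_{\beta_{2}}(\varepsilon(\beta_{1}))<v_{\beta_{1}}(\varepsilon(\beta_{1}))=1$, using $\beta_{1}<\beta_{2}$ and $\varepsilon_{1}(\beta_{1})\ge1$, a contradiction. Otherwise let $i_{0}\ge0$ be the first index with $\varepsilon^{\ast}_{i_{0}+1}(\beta_{2})<\varepsilon_{i_{0}+1}(\beta_{1})$, so $\varepsilon^{\ast}_{i_{0}+1}(\beta_{2})\le\varepsilon_{i_{0}+1}(\beta_{1})-1$. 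Splitting $1=v_{\beta_{2}}(\varepsilon^{\ast}(\beta_{2}))$ at position $i_{0}+1$ and bounding the tail by $\beta_{2}^{-(i_{0}+1)}$ gives $1\le\sum_{i=1}^{i_{0}+1}\varepsilon_{i}(\beta_{1})\beta_{2}^{-i}$; but the right-hand side is strictly smaller than $\sum_{i=1}^{i_{0}+1}\varepsilon_{i}(\beta_{1})\beta_{1}^{-i}\le v_{\beta_{1}}(\varepsilon(\beta_{1}))=1$, again a contradiction. Hence $\varepsilon(\beta_{1})\prec\varepsilon^{\ast}(\beta_{2})$.

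The one genuinely load-bearing estimate is the tail bound $\sum_{i>n}\varepsilon^{\ast}_{i}(\beta)\beta^{-i}\le\beta^{-n}$ in $(i)$, since it is precisely what converts the purely lexicographic hypothesis in $(ii)$ into a numerical inequality; everything else is geometric-series bookkeeping. The main thing to watch is the terminating case, where one must check the tail bound at \emph{every} $n$, not only at multiples of the period $i_{0}$, the reduction to $\sum_{i=1}^{s}\varepsilon_{i}(\beta)\beta^{-i}\ge1-\beta^{-s}$ being the exact place where the greedy tail bound is reused. I expect no serious obstacle beyond this careful case separation.
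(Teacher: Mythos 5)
Your proof is correct. There is nothing in the paper to compare it against: Proposition \ref{P:asteps} is stated in the Preliminaries without proof, as classical background on $\beta$-expansions going back to R\'enyi and Parry, and your argument is exactly the standard one. All the steps check out: the identity $\sum_{i=1}^{n}\varepsilon_{i}(\beta)\beta^{-i}=1-\beta^{-n}T^{n}_{\beta}1$ from $(\ref{E:x=Tn})$ with $x=1$ gives both the value $1$ and the tail bound for $\varepsilon(\beta)$; in the terminating case your reduction of the tail bound at $n=qi_{0}+s$ to $\sum_{i=1}^{s}\varepsilon_{i}(\beta)\beta^{-i}\geq 1-\beta^{-s}$ is the right move and is precisely where the greedy bound gets reused (this is indeed the one point where a careless argument would only verify the bound at multiples of $i_{0}$); and in $(ii)$ the strictness of $\sum_{i=1}^{i_{0}+1}\varepsilon_{i}(\beta_{1})\beta_{2}^{-i}<\sum_{i=1}^{i_{0}+1}\varepsilon_{i}(\beta_{1})\beta_{1}^{-i}$ is legitimately guaranteed by $\varepsilon_{1}(\beta_{1})=\lfloor\beta_{1}\rfloor\geq 1$, which you also use to dispose of the equality case. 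Two minor remarks: you correctly read $i_{0}$ as the position of the \emph{last} nonzero digit of $\varepsilon(\beta)$ — the paper's phrase ``smallest integer such that $\varepsilon_{i_{0}}(\beta)\neq 0$'' is a slip, since that would always give $i_{0}=1$ — and your handling of the genuinely delicate subcase $i_{0}=1$ (forcing $\beta\in\mathbb{Z}$, $\beta\geq 2$, so $\varepsilon^{\ast}_{1}(\beta)=\beta-1\geq 1$) is exactly what is needed for $\varepsilon^{\ast}_{1}(\beta)\geq 1$.
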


\subsection{Admissible sequence}

For any $n\in \mathbb{N}$, let
 \begin{equation*}
    \Sigma_{n}(\beta)=\{\varepsilon_{1}(x,\beta)\cdots\varepsilon_{n}(x,\beta)\colon x\in[0,1)\},
 \end{equation*}
i.e., the collection of all possible prefixes of length $n$ of the beta-expansion of some $x\in[0,1)$ in base $\beta$, called admissible words/sequences.
The following two lemmas present a characterization of elements in $\Sigma_{n}(\beta)$ and other basic properties due to Parry \cite{Pa} and R\'{e}nyi \cite{Re}.

\begin{lemma}[\cite{Pa}]\label{L:admin}
$(i)$ Let $\beta>1$. A sequence $\xi=\xi_{1}\xi_{2}\cdots$ of nonnegative integers is the beta-expansion of some $x\in[0,1)$ in base $\beta$ if and only if
\begin{equation*}
    \sigma^{i}\xi\prec \varepsilon^{\ast}(\beta) \quad\text{ for all } i\geq 0,
\end{equation*}
where $\sigma$ is the shift operator such that $\sigma\xi=\xi_{2}\xi_{3}\cdots$. So, for any $w\in \Sigma_n(\beta)$, the sequence $w0^{\infty}$ is the beta-expansion of some $x\in[0,1)$ in base $\beta$.

$(ii)$  A sequence $\xi=\xi_{1}\xi_{2}\cdots$ of nonnegative integers with $\xi\neq 10^{\infty}$ is the beta-expansion of $1$ in some base $\beta>1$ if and only if
  \begin{equation*}
    \sigma^{i}\xi\prec \xi \quad\text{ for all } i\geq 1.
\end{equation*}

$(iii)$ For any $x\in (0,1]$, the map $\beta\mapsto \varepsilon(x,\beta)$ is strictly increasing, i.e., $1<\beta_{1}<\beta_{2}$ if and only if
   $\varepsilon(x,\beta_{1})\prec \varepsilon(x,\beta_{2})$. The map $\beta\mapsto \varepsilon^{\ast}(\beta)$ is also strictly increasing.
\end{lemma}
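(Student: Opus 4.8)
These three statements are classical facts due to R\'enyi and Parry, and I would derive all of them from two elementary properties of the greedy map $x\mapsto\varepsilon(x,\beta)$ together with the valuation $\pi_\beta(\xi):=\sum_{n\ge1}\xi_n\beta^{-n}$. Fix $\beta>1$. From $\varepsilon_n(x,\beta)=\lfloor\beta T^{n-1}_\beta x\rfloor$ one reads off the shift identity $\sigma\varepsilon(x,\beta)=\varepsilon(T_\beta x,\beta)$, hence $\sigma^i\varepsilon(x,\beta)=\varepsilon(T^i_\beta x,\beta)$ with $T^i_\beta x\in[0,1)$ whenever $x\in[0,1)$. The second property is strict order preservation on $[0,1)$: $x<y$ iff $\varepsilon(x,\beta)\prec\varepsilon(y,\beta)$. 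I would prove this by comparing $x$ and $y$ through $(\ref{E:x=Tn})$ at the first index $k$ where their expansions disagree; if $\varepsilon_k(y,\beta)-\varepsilon_k(x,\beta)\ge1$ then, since the two remainders $\beta^{-k}T^k_\beta x$ and $\beta^{-k}T^k_\beta y$ differ by less than $\beta^{-k}$, one gets $y>x$, and the converse follows by contraposition. The same computation shows that $\pi_\beta$ is monotone with respect to $\prec$ on admissible sequences (using the tail bound of Proposition \ref{P:asteps}$(i)$), which is the only control on values I shall need.

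For the forward part of $(i)$ I use that $\varepsilon^\ast(\beta)$ is the lexicographic supremum of $\{\varepsilon(y,\beta)\colon y\in[0,1)\}$. If $\xi=\varepsilon(x,\beta)$ then each shift $\sigma^i\xi=\varepsilon(T^i_\beta x,\beta)$ is the expansion of a point of $[0,1)$, whence $\sigma^i\xi\prec\varepsilon^\ast(\beta)$; when $\varepsilon(\beta)$ is infinite this is just order preservation against $\varepsilon(\beta)=\varepsilon^\ast(\beta)=\varepsilon(1,\beta)$, while if $\varepsilon(\beta)$ terminates the periodic sequence $\varepsilon^\ast(\beta)$ is attained by no $\varepsilon(y,\beta)$ with $y<1$, again forcing strictness. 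Conversely, given $\xi$ with $\sigma^i\xi\prec\varepsilon^\ast(\beta)$ for all $i\ge0$, set $x:=\pi_\beta(\xi)$; monotonicity of $\pi_\beta$ and $\pi_\beta(\varepsilon^\ast(\beta))=1$ give $x\in[0,1)$, while the conditions on all shifts say exactly that every tail $\sum_{n>i}\xi_n\beta^{-(n-i)}$ lies in $[0,1)$, which is what is required to verify inductively that the greedy algorithm applied to $x$ returns $\xi$, i.e.\ $\varepsilon_{i+1}(x,\beta)=\xi_{i+1}$ for all $i\ge0$.

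Part $(ii)$ then follows quickly. If $\xi=\varepsilon(\beta)$, then for $i\ge1$ the point $T^i_\beta1\in[0,1)$ has expansion $\sigma^i\xi$, so $(i)$ together with $\varepsilon^\ast(\beta)\preceq\varepsilon(\beta)=\xi$ yields $\sigma^i\xi\prec\xi$. Conversely, let $\xi\ne10^\infty$ satisfy $\sigma^i\xi\prec\xi$ for all $i\ge1$; this already forces $\xi_1\ge1$, so $\beta\mapsto\pi_\beta(\xi)$ is continuous and strictly decreasing and equals $1$ at a unique $\beta>1$. The self-domination $\sigma^i\xi\prec\xi$ is precisely what lets the greedy algorithm on $1$ read off $\xi$ step by step, giving $\varepsilon(\beta)=\xi$; the exclusion of $10^\infty$ removes the degenerate solution $\beta=1$.

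For $(iii)$ I argue by contradiction: if $1<\beta_1<\beta_2$ but $\varepsilon(x,\beta_1)\succeq\varepsilon(x,\beta_2)$ then, since $x\in(0,1]$ makes $\varepsilon(x,\beta_1)$ nonzero, monotonicity of $\pi_{\beta_2}$ in the sequence variable and strict decrease of $\pi_\beta(\eta)$ in $\beta$ give
\begin{equation*}
x=\pi_{\beta_2}\big(\varepsilon(x,\beta_2)\big)\le\pi_{\beta_2}\big(\varepsilon(x,\beta_1)\big)<\pi_{\beta_1}\big(\varepsilon(x,\beta_1)\big)=x,
\end{equation*}
a contradiction, and the reverse implication is immediate by the same monotonicity; the assertion for $\varepsilon^\ast$ is Proposition \ref{P:asteps}$(ii)$. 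The main obstacle throughout is the quasi-greedy boundary phenomenon: one must consistently work with $\varepsilon^\ast(\beta)$ rather than $\varepsilon(\beta)$ and split into the terminating and infinite cases, and in the converse of $(i)$ one must check with care that the hypothesis on every shift is exactly strong enough to prevent the greedy digit at any step from being off by one.
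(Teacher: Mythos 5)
The paper itself gives no proof of this lemma: it is stated as a quotation of Parry's classical results \cite{Pa}, so the only fair comparison is with the standard classical argument, and your reconstruction does follow that route faithfully. The scaffolding you set up is exactly right: the shift identity $\sigma\varepsilon(x,\beta)=\varepsilon(T_{\beta}x,\beta)$, strict order preservation of $x\mapsto\varepsilon(x,\beta)$ on $[0,1)$, monotonicity of the valuation $\xi\mapsto\sum_{n\geq 1}\xi_{n}\beta^{-n}$ with respect to $\prec$ \emph{restricted to admissible sequences} (you correctly note this needs the tail bound of Proposition \ref{P:asteps}$(i)$ --- the valuation is not lexicographically monotone on arbitrary sequences), and the inductive verification in the converse of $(i)$ that the shift hypotheses place every tail in $[0,1)$ so the greedy algorithm reproduces $\xi$. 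Your part $(ii)$ (self-domination forces $\xi_{1}\geq 1$, the map $\beta\mapsto\sum\xi_{n}\beta^{-n}$ is strictly decreasing and hits $1$ at a unique $\beta>1$, the exclusion of $10^{\infty}$ removing the degenerate solution) and part $(iii)$ (the chain $x\leq\pi_{\beta_{2}}(\varepsilon(x,\beta_{1}))<\pi_{\beta_{1}}(\varepsilon(x,\beta_{1}))=x$, legitimate because $\varepsilon(x,\beta_{1})$ is $\beta_{2}$-admissible) are both sound, and the final clause of $(i)$ about $w0^{\infty}$ for $w\in\Sigma_{n}(\beta)$ follows at once from your criterion since truncating a tail to zeros only decreases each shift lexicographically.

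One step is genuinely under-justified: in the forward direction of $(i)$, in the terminating case you support the claim that $\varepsilon^{\ast}(\beta)$ is the lexicographic supremum of $\{\varepsilon(y,\beta)\colon y\in[0,1)\}$ only by remarking that $\varepsilon^{\ast}(\beta)$ ``is attained by no $\varepsilon(y,\beta)$ with $y<1$.'' Non-attainment yields strictness only \emph{after} one knows the domination $\varepsilon(y,\beta)\preceq\varepsilon^{\ast}(\beta)$, and that is not a consequence of order preservation against $\varepsilon(\beta)$: when $\varepsilon(\beta)=\varepsilon_{1}\cdots\varepsilon_{i_{0}}0^{\infty}$ there are plenty of sequences lying strictly between $\varepsilon^{\ast}(\beta)$ and $\varepsilon(\beta)$ (anything beginning $\varepsilon_{1}\cdots\varepsilon_{i_{0}-1}(\varepsilon_{i_{0}}-1)$ followed by large digits), so you must rule out that some greedy expansion falls in that gap. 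The standard repair is recursive: if $\varepsilon(y,\beta)$ began with the full prefix $\varepsilon_{1}\cdots\varepsilon_{i_{0}}$, then since $\sum_{j=1}^{i_{0}}\varepsilon_{j}\beta^{-j}=1$ one gets $T^{i_{0}}_{\beta}y=\beta^{i_{0}}(y-1)<0$ for $y<1$, impossible; hence the $i_{0}$-th digit is at most $\varepsilon_{i_{0}}-1$, and if $\varepsilon(y,\beta)$ agrees with $\varepsilon^{\ast}(\beta)$ through the whole first period one iterates the same dichotomy on $T^{i_{0}}_{\beta}y\in[0,1)$, equality forever forcing $y=1$. A second, more minor omission of the same flavour: your assertion that $x=\sum\xi_{n}\beta^{-n}<1$ (and that each tail is $<1$) in the converse of $(i)$ uses that $\varepsilon^{\ast}(\beta)$ has infinitely many nonzero digits, together with a small bootstrap showing first that all tails are $\leq 1$ before strictness can be extracted at the first disagreement. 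Both points are routine to fill in, and with them your proof is complete and is essentially Parry's original one.
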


\begin{lemma}[\cite{Pa,Re}]\label{L:number}
$(i)$  If $1<\beta_{1}<\beta_{2}$, then for any $n\in \mathbb{N}$, $\Sigma_{n}(\beta_{1})\subset \Sigma_{n}(\beta_{2})$.
$(ii)$ Let $\beta>1$. For any $n\in \mathbb{N}$,
\begin{equation*}\beta^{n}\leq\#\Sigma_{n}(\beta)\leq \frac{\beta^{n+1}}{\beta-1},\end{equation*}
where $\#$ denotes the cardinality of a finite set.
\end{lemma}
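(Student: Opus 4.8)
For part $(i)$, the plan is to translate membership in $\Sigma_n(\beta)$ into a purely lexicographic condition and then exploit the monotonicity already recorded in Lemma \ref{L:admin}. Concretely, by Lemma \ref{L:admin}$(i)$ a word $w$ of length $n$ lies in $\Sigma_n(\beta)$ exactly when $w0^{\infty}$ is the beta-expansion of some point of $[0,1)$, that is, when $\sigma^{i}(w0^{\infty})\prec\varepsilon^{\ast}(\beta)$ for every $i\ge 0$. Since $\beta\mapsto\varepsilon^{\ast}(\beta)$ is strictly increasing by Lemma \ref{L:admin}$(iii)$, one has $\varepsilon^{\ast}(\beta_{1})\prec\varepsilon^{\ast}(\beta_{2})$ whenever $\beta_{1}<\beta_{2}$. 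Thus any $w\in\Sigma_{n}(\beta_{1})$ automatically satisfies $\sigma^{i}(w0^{\infty})\prec\varepsilon^{\ast}(\beta_{1})\prec\varepsilon^{\ast}(\beta_{2})$ for all $i\ge 0$, whence $w\in\Sigma_{n}(\beta_{2})$, giving the inclusion $\Sigma_{n}(\beta_{1})\subset\Sigma_{n}(\beta_{2})$.

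For part $(ii)$, the plan is to count admissible words by tracking how the cylinders branch under one further application of $T_{\beta}$. Write $a_{n}=\#\Sigma_{n}(\beta)$. Since every admissible word of length $n$ has a unique admissible prefix of length $n-1$, one obtains the exact identity $a_{n}=\sum_{w\in\Sigma_{n-1}(\beta)}\#\{a\colon wa\in\Sigma_{n}(\beta)\}$. The geometric input is that for each $w\in\Sigma_{n-1}(\beta)$ the cylinder $I_{n-1}(w):=\{x\in[0,1)\colon \varepsilon_{i}(x,\beta)=w_{i},\,1\le i\le n-1\}$ is an interval on which $T^{n-1}_{\beta}$ acts affinely with slope $\beta^{n-1}$, so its image is an interval $[0,r_{w})$ with $r_{w}=\beta^{n-1}|I_{n-1}(w)|\le 1$; the admissible continuations of $w$ are then precisely the digits $0,1,\dots,\lceil\beta r_{w}\rceil-1$, so that $\#\{a\colon wa\in\Sigma_{n}(\beta)\}=\lceil\beta r_{w}\rceil$. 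Because the order-$(n-1)$ cylinders partition $[0,1)$ we have $\sum_{w}|I_{n-1}(w)|=1$, hence $\sum_{w}r_{w}=\beta^{n-1}$, and the elementary sandwich $\beta r_{w}\le\lceil\beta r_{w}\rceil\le\beta r_{w}+1$ yields at once $a_{n}\ge\beta\sum_{w}r_{w}=\beta^{n}$ and $a_{n}\le\beta^{n}+a_{n-1}$. Iterating the latter recursion from $a_{0}=1$ gives $a_{n}\le\sum_{k=0}^{n}\beta^{k}=\frac{\beta^{n+1}-1}{\beta-1}\le\frac{\beta^{n+1}}{\beta-1}$, which is the asserted two-sided bound. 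As an alternative, the lower bound also follows directly from the partition identity $1=\sum_{w}|I_{n}(w)|$ together with $|I_{n}(w)|\le\beta^{-n}$, the latter being just the expanding property of $T_{\beta}$.

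The hard part will be the geometric facts underpinning the branching count, namely that an order-$(n-1)$ cylinder is an interval carried affinely with slope $\beta^{n-1}$ onto $[0,r_{w})$, and that its number of admissible continuations is exactly $\lceil\beta r_{w}\rceil$. These rest on the piecewise-linear expanding structure of $T_{\beta}$ and demand some care with half-open endpoints and with the borderline case in which $\beta r_{w}$ is an integer (in particular when $\beta$ itself is an integer). Once these are established the remaining estimates are purely arithmetic, so I expect no further essential difficulty.
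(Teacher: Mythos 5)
Your proof is correct, but note that the paper offers no internal proof of this lemma at all: it is quoted as a known result of R\'{e}nyi and Parry (\cite{Pa,Re}), so the only comparison available is with the classical arguments. Part $(i)$ is exactly the standard route: the equivalence $w\in\Sigma_{n}(\beta)\Leftrightarrow\sigma^{i}(w0^{\infty})\prec\varepsilon^{\ast}(\beta)$ for all $i\geq 0$ is precisely what Lemma \ref{L:admin}$(i)$ supplies (including the statement that $w0^{\infty}$ is itself an expansion), and the strict monotonicity of $\beta\mapsto\varepsilon^{\ast}(\beta)$ from Lemma \ref{L:admin}$(iii)$ gives the inclusion by transitivity of $\prec$. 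Part $(ii)$ is where you genuinely deviate from the usual treatments: the classical lower bound is your ``alternative'' (the partition identity $1=\sum_{w}|\mathfrak{I}(w)|$ with $|\mathfrak{I}(w)|\leq\beta^{-n}$), while the classical upper bound is typically obtained by decomposing an admissible word according to its longest prefix agreeing with $\varepsilon^{\ast}(\beta)$ and inducting. Your branching count is a clean and legitimate replacement: since each cylinder $\mathfrak{I}(w)$, $w\in\Sigma_{n-1}(\beta)$, is a half-open interval with left endpoint $\sum_{i}w_{i}\beta^{-i}$ on which $T^{n-1}_{\beta}$ is affine with slope $\beta^{n-1}$ (both facts are recorded in Section $2$ of the paper, via $(\ref{E:x=Tn})$ and the displayed form of $\mathfrak{I}(w)$), its image is exactly $[0,r_{w})$, and $\lfloor\beta\,\cdot\,\rfloor$ on the half-open interval $[0,\beta r_{w})$ takes precisely the $\lceil\beta r_{w}\rceil$ values $0,\dots,\lceil\beta r_{w}\rceil-1$; the half-openness is what makes your ceiling formula handle the borderline case of integer $\beta r_{w}$ automatically, so the ``hard part'' you flagged is indeed already discharged by the paper's own description of cylinders. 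The sandwich $\beta r_{w}\leq\lceil\beta r_{w}\rceil\leq\beta r_{w}+1$ together with $\sum_{w}r_{w}=\beta^{n-1}$ then yields both $a_{n}\geq\beta^{n}$ and the recursion $a_{n}\leq\beta^{n}+a_{n-1}$, whose iteration from $a_{0}=1$ gives the stated geometric-sum bound; what this approach buys is a two-sided estimate from a single exact identity $a_{n}=\sum_{w}\lceil\beta r_{w}\rceil$, at the modest cost of invoking the geometry of cylinders rather than pure combinatorics on words. One cosmetic caution: you write $I_{n-1}(w)$ for cylinders in $[0,1)$, whereas the paper reserves $I(\cdot)$ for cylinders in the parameter space and uses $\mathfrak{I}(\cdot)$ for the ones you need; adopt the latter to avoid a notational clash.
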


\subsection{Cylinders in beta-expansion}
Fix $\beta>1$. For any $n\in \mathbb{N}$ and $w\in \Sigma_{n}(\beta)$, let $$\mathfrak{I}(w)=\{x\in[0,1)\colon \varepsilon_{1}(x,\beta)\cdots\varepsilon_{n}(x,\beta)=w\}$$ and call it a cylinder of order $n$. From the algorithm of beta-expansion, one has
$$
\mathfrak{I}(w)=\left[\sum_{i=1}^n\frac{w_i}{\beta^i}, \ t\right),
$$
and its length satisfies $|\mathfrak{I}(w)|\le \beta^{-n}$. So call $\mathfrak{I}(w)$ a {\em full cylinder} of order $n$ if $|\mathfrak{I}(w)|=\beta^{-n}$.

For any $n\in \mathbb{N}$, let
\begin{equation*}
    \Xi_{n}(\beta)=\{w\in \Sigma_{n}(\beta)\colon |\mathfrak{I}(w)|=\beta^{-n}\},
\end{equation*}
i.e., the collection of all $w\in \Sigma_{n}(\beta)$ such that $\mathfrak{I}(w)$ is a full cylinder of order $n$.

The following lemma collects some properties of full cylinders.
\begin{lemma}[\cite{BuWa, FaWa,LiLi}]\label{L:full}\label{L:fulcyl}
 Let $\beta>1$ and $w=w_{1}\cdots w_{n-1}w_{n}$ be a word of nonnegative integers with $n\in \mathbb{N}$.

$(i)$ $w\in \Xi_{n}(\beta)$, if and only if
$\sigma^{i}w_{1}\cdots w_{n-1}w^{+}_{n} \preceq \varepsilon_{1}(\beta)\cdots \varepsilon_{n-i}(\beta)$ for all $0\leq i\leq n-1$, where $w^{+}_{n}=w_n+1$.

$(ii)$ $w\in\Xi_{n}(\beta)$ if and only if for any $m\in \mathbb{N}$ and $v\in\Sigma_{m}(\beta)$, one has $wv\in\Sigma_{n+m}(\beta)$.

$(iii)$ If $w\in\Xi_{n}(\beta)$, then for any $m\in \mathbb{N}$ and $v\in\Xi_{m}(\beta)$, one has $wv\in\Xi_{n+m}(\beta)$.

$(iv)$ Among every $n+1$ consecutive cylinders of order $n$, there exists at least one full cylinder.
\end{lemma}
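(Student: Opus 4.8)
The plan is to anchor everything on one geometric fact: on the cylinder $\mathfrak{I}(w)=[\langle w\rangle,t)$ with $\langle w\rangle:=\sum_{i=1}^{n}w_i\beta^{-i}$, the map $T^{n}_{\beta}$ acts as the affine bijection $x\mapsto\beta^{n}x-\beta^{n}\langle w\rangle$ carrying $\mathfrak{I}(w)$ onto $[0,\beta^{n}(t-\langle w\rangle))$. Since $|\mathfrak{I}(w)|\le\beta^{-n}$, fullness is equivalent to $T^{n}_{\beta}\mathfrak{I}(w)=[0,1)$. With this, part $(ii)$ is immediate in both directions. If $w$ is full, then for any $v\in\Sigma_{m}(\beta)$ the order-$m$ cylinder $\mathfrak{I}(v)\subset[0,1)$ meets the image $T^{n}_{\beta}\mathfrak{I}(w)=[0,1)$, so some $x\in\mathfrak{I}(w)$ has $T^{n}_{\beta}x\in\mathfrak{I}(v)$; using the shift identity $\sigma^{n}\varepsilon(x,\beta)=\varepsilon(T^{n}_{\beta}x,\beta)$, this $x$ witnesses $wv\in\Sigma_{n+m}(\beta)$. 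Conversely, if $w$ is not full then $T^{n}_{\beta}\mathfrak{I}(w)=[0,s)$ with $s<1$; choosing $m$ with $\beta^{-m}<\min(1-y,\,y-s)$ for some fixed $y\in(s,1)$ forces the order-$m$ cylinder $\mathfrak{I}(v)$ containing $y$ to lie inside $(s,1)$ (its length being at most $\beta^{-m}$), whence no point of $\mathfrak{I}(w)$ continues with $v$, i.e. $wv\notin\Sigma_{n+m}(\beta)$.

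Part $(iii)$ I would deduce from $(ii)$ with no further geometry: to show $wv$ is full it suffices, by $(ii)$, to verify $(wv)u=w(vu)\in\Sigma_{n+m+k}(\beta)$ for every $k$ and every $u\in\Sigma_{k}(\beta)$; since $v$ is full, $(ii)$ gives $vu\in\Sigma_{m+k}(\beta)$, and since $w$ is full, $(ii)$ applied once more gives $w(vu)\in\Sigma_{n+m+k}(\beta)$. For part $(i)$ the starting observation is that the right endpoint always satisfies $t\le\langle w\rangle+\beta^{-n}=\langle w^{+}\rangle$, where $w^{+}:=w_{1}\cdots w_{n-1}w^{+}_{n}$, so $w$ is full exactly when $t=\langle w^{+}\rangle$, i.e. when points just below $\langle w^{+}\rangle$ still carry the prefix $w$. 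This happens precisely when $\langle w^{+}\rangle$ is the left endpoint of the successor cylinder, equivalently when the greedy algorithm reads back $w^{+}$ as the first $n$ digits of $\langle w^{+}\rangle$. By the lexicographic admissibility criterion of Lemma \ref{L:admin}, that condition is a system of inequalities on the shifts $\sigma^{i}(w^{+})$ for $0\le i\le n-1$; truncating each to its first $n-i$ coordinates and invoking Proposition \ref{P:asteps} to pass between $\varepsilon^{\ast}(\beta)$ and $\varepsilon(\beta)$ collapses it to the finite family $\sigma^{i}(w_{1}\cdots w_{n-1}w^{+}_{n})\preceq\varepsilon_{1}(\beta)\cdots\varepsilon_{n-i}(\beta)$ of $(i)$, the increment $w_{n}\mapsto w^{+}_{n}$ being exactly what encodes the right (rather than left) endpoint.

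The hard part will be $(iv)$, which is purely combinatorial and where I expect the real work. Since order-$n$ cylinders tile $[0,1)$ and their lexicographic order agrees with their position, the claim is equivalent to: there is no run of $n+1$ consecutive non-full cylinders. The plan is to attach to each non-full cylinder, via $(i)$, a \emph{failure index} --- the least $i\in\{0,\dots,n-1\}$ at which $\sigma^{i}(w_{1}\cdots w_{n-1}w^{+}_{n})\succ\varepsilon_{1}(\beta)\cdots\varepsilon_{n-i}(\beta)$ --- and to track how it evolves under passage to the successor cylinder. The structural input is that the right endpoint of a non-full cylinder must agree, from its failure position onward, with a prefix of the expansion $\varepsilon(\beta)$ of $1$; as one moves right through a maximal block of non-full cylinders this matched prefix can only lengthen, and since a word has length $n$ the block cannot exceed $n$ cylinders. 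I would make this precise either through such prefix-matching bookkeeping, or by induction on $n$ using the refinement of order-$n$ cylinders into order-$(n+1)$ ones: a full parent, being carried onto $[0,1)$ by $T^{n}_{\beta}$, splits into sub-cylinders exhibiting exactly the order-$1$ pattern (full ones followed by a single truncated one), and controlling how runs straddle parent boundaries yields the bound $n+1$. The delicate point in either route is the behaviour at block boundaries, which is precisely where the increment $w_{n}\mapsto w^{+}_{n}$ and the admissibility of $1$ interact.
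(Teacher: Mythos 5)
The paper does not prove this lemma at all --- it is imported verbatim from \cite{BuWa, FaWa, LiLi} --- so your proposal can only be measured against the standard proofs in those references. Your parts $(ii)$ and $(iii)$ are correct and complete, and follow the standard route: identifying $T^{n}_{\beta}$ on $\mathfrak{I}(w)$ with the affine map $x\mapsto \beta^{n}(x-\sum_{i=1}^{n}w_{i}\beta^{-i})$, so that fullness is exactly $T^{n}_{\beta}\mathfrak{I}(w)=[0,1)$, yields the forward direction of $(ii)$ via the shift relation $\varepsilon_{n+i}(x,\beta)=\varepsilon_{i}(T^{n}_{\beta}x,\beta)$, and your separation argument for the converse (choosing $m$ with $\beta^{-m}<\min(1-y,\,y-s)$ so the order-$m$ cylinder around $y$ sits inside $(s,1)$) is sound; deducing $(iii)$ purely formally from $(ii)$ through $w(vu)$ is exactly how the literature does it.

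Parts $(i)$ and $(iv)$, however, contain genuine gaps. For $(i)$, the step ``the greedy algorithm reads back $w^{+}$ as the first $n$ digits of $\langle w^{+}\rangle$, and truncating plus Proposition \ref{P:asteps} collapses this to the stated family'' is where all the content lives, and it is asserted rather than proved. Concretely: the supremum of $\mathfrak{I}(w)$ is approached by expansions of points from the \emph{left}, which converge to the quasi-greedy sequence $w\varepsilon^{\ast}(\beta)$ (since $\beta^{-n}=\sum_{i\geq 1}\varepsilon^{\ast}_{i}(\beta)\beta^{-n-i}$), not to the greedy expansion of $\langle w^{+}\rangle$; translating the resulting conditions $\sigma^{i}(w\varepsilon^{\ast}(\beta))\preceq\varepsilon^{\ast}(\beta)$ into the finite criterion with \emph{non-strict} $\preceq$ against truncations of $\varepsilon(\beta)$ --- note, of $\varepsilon(\beta)$, not of $\varepsilon^{\ast}(\beta)$ --- is precisely the fiddly equivalence your sketch skips. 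The criterion even allows equality $w^{+}=\varepsilon_{1}(\beta)\cdots\varepsilon_{n}(\beta)$ with $w^{+}\notin\Sigma_{n}(\beta)$ (golden mean, $w=10$), and $\langle w^{+}\rangle$ may exceed $1$ (golden mean, $w=1$), in which case ``the digits of $\langle w^{+}\rangle$'' do not exist; neither case is handled. For $(iv)$, what you offer is a plan, and the two load-bearing claims --- that a non-full word must agree with a prefix of $\varepsilon(\beta)$ from its failure index onward, and that this matched prefix strictly lengthens as one passes to the successor cylinder within a run of non-full cylinders --- are in substance correct (they match the run-structure of non-full words analyzed by Li--Li and underlying Bugeaud--Wang's original proof), but they \emph{are} the theorem: proving them requires identifying the successor word of a non-full cylinder explicitly and controlling exactly the boundary interaction you flag as delicate. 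As written, $(i)$ and $(iv)$ are unproven outlines, so the proposal as a whole cannot be accepted as a proof of the lemma.
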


\subsection{Cylinders in parameter space}
Fix some $x\in (0,1]$. For any $n\in \mathbb{N}$, let
\begin{equation*}
    \Omega_{n}(x)=\{\varepsilon_{1}(x,\beta)\cdots\varepsilon_{n}(x,\beta)\colon \beta>1\},
\end{equation*}
i.e., the collection of all possible prefixes of length $n$ of the beta-expansion of $x$ in some base $\beta>1$.
Different to Parry's lexicographic characterization of admissible word/sequence,
it is hard to present a general characterization of the words in $\Omega_{n}(x)$ in analogy with item $(i)$ in Lemma \ref{L:admin}.

For any  $n\in\mathbb{N}$ and $w\in\Omega_{n}(x)$, let $\underline{\beta}(w)=1$ if $w=0^{n}$; otherwise, let $\underline{\beta}(w)\ge 1$ be the unique positive solution of the equation
\begin{equation*}\label{E:1=beta}
    x=\frac{w_{1}}{\beta}+\frac{w_{2}}{\beta^{2}}+\cdots+\frac{w_{n}}{\beta^{n}}.
\end{equation*}
Since $\varepsilon_{1}(\beta)\geq 1$ for any $\beta>1$, then
\begin{equation}\label{E:beta=1or}
    \underline{\beta}(w)=1 \Longleftrightarrow  \text{``} x\in(0,1) \text{ and } w=0^{n} \text{" or ``} x=1 \text{ and } w=10^{n-1} \text{"}.
\end{equation}
For any $k\in \mathbb{N}$, denote by $w^{(k)}$ the lexicographically largest word in $\Omega_{n+k}(x)$ with $w$ as a prefix.

\begin{lemma}[\cite{LuWu}]\label{L:unovbeta}
  Let $w\in\Omega_{n}(x)$ with $n\in \mathbb{N}$. The following hold:
  \begin{enumerate}
    \item If $\underline{\beta}(w)>1$, then $\varepsilon(x,\underline{\beta}(w))=w0^{\infty}$;
    \item The limit of the sequence $\left\{\underline{\beta}\left(w^{(k)}\right)\right\}_{k\ge 1}$  exists. If denote it by $\overline{\beta}(w)$, then $\overline{\beta}(w)>\underline{\beta}(w)\geq 1$.
  \end{enumerate}
\end{lemma}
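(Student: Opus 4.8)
The plan is to handle the two assertions in turn, with the first feeding the second. For part (1), set $\beta_0=\underline{\beta}(w)>1$ and introduce the forced remainders $R_k(\beta)=\beta^k x-\sum_{i=1}^k w_i\beta^{k-i}$, so that $R_0\equiv x$, the recursion $R_k(\beta)=\beta R_{k-1}(\beta)-w_k$ holds, and the defining equation of $\underline{\beta}(w)$ gives $R_n(\beta_0)=0$ together with $R_k(\beta_0)=\sum_{i=k+1}^{n}w_i\beta_0^{-(i-k)}\ge 0$. The key reduction is that the greedy algorithm at $\beta_0$ returns exactly $w0^{\infty}$ as soon as $R_k(\beta_0)\in[0,1)$ for every $1\le k\le n-1$: then $\lfloor\beta_0 R_{k-1}(\beta_0)\rfloor=w_k$ and $T^{k}_{\beta_0}x=R_k(\beta_0)$, while $R_n(\beta_0)=0$ forces every later digit to vanish. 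Nonnegativity is automatic, so only the bound $R_k(\beta_0)<1$ is at issue.

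To obtain that bound I would compare $\beta_0$ with a base realizing $w$. Since $w\in\Omega_n(x)$, fix $b>1$ with $\varepsilon(x,b)|_n=w$; then $R_k(b)=T^{k}_{b}x\in[0,1)$ for $1\le k\le n$, and because $x$ lies in the order-$n$ cylinder of $w$ in base $b$ we have $\sum_{i=1}^{n}w_i b^{-i}\le x=\sum_{i=1}^{n}w_i\beta_0^{-i}$, whence $b\ge\beta_0$ by monotonicity of $\beta\mapsto\sum_i w_i\beta^{-i}$. I would then show by induction on $k$ that on $[\beta_0,b]$ each $R_k$ is nonnegative and nondecreasing: differentiating the recursion gives $R_k'=R_{k-1}+\beta R_{k-1}'$, so nonnegativity and monotonicity propagate from $R_{k-1}$ to $R_k$, and $R_k(\beta_0)\ge 0$ keeps $R_k\ge 0$ across the whole interval. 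Evaluating the resulting monotonicity at the endpoints yields $R_k(\beta_0)\le R_k(b)=T^{k}_{b}x<1$, which is precisely the missing estimate; hence $\varepsilon(x,\beta_0)=w0^{\infty}$.

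For part (2) I would first prove the words $w^{(k)}$ are nested, i.e.\ $w^{(k)}$ is a prefix of $w^{(k+1)}$: the length-$(n+k)$ prefix of $w^{(k+1)}$ lies in $\Omega_{n+k}(x)$ and extends $w$, and were it lexicographically smaller than $w^{(k)}$ then any base realizing $w^{(k)}$ would produce a length-$(n+k+1)$ extension of $w$ exceeding $w^{(k+1)}$, contradicting its maximality. Next, extending a word never decreases $\underline{\beta}$, since a prefix $u$ of $u'$ satisfies $\sum_i u_i\beta^{-i}\le\sum_i u'_i\beta^{-i}$ and the two strictly decreasing defining equations then force $\underline{\beta}(u)\le\underline{\beta}(u')$; thus $\{\underline{\beta}(w^{(k)})\}_{k\ge1}$ is nondecreasing. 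It is bounded above because, by part (1), each value $\underline{\beta}(w^{(k)})>1$ realizes $w^{(k)}$ and so lies in $B_w:=\{\beta>1:\varepsilon(x,\beta)|_n=w\}$, while $B_w$ is bounded since $\varepsilon_1(x,\beta)=\lfloor\beta x\rfloor\to\infty$. A bounded nondecreasing sequence converges, giving the existence of $\overline{\beta}(w)$ with $\overline{\beta}(w)\ge\underline{\beta}(w)\ge 1$.

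The delicate point --- and the step I expect to be the main obstacle --- is the strict inequality $\overline{\beta}(w)>\underline{\beta}(w)$, which amounts to $B_w$ being a nondegenerate interval. I would first establish nondegeneracy: by part (1) the order-$n$ cylinder of $w$ in base $\beta_0$ has $x$ as its left endpoint, and as the cylinder of an admissible word it is a nonempty half-open interval $[x,t)$ with $t>x$; as $\beta$ increases past $\beta_0$ its left endpoint $\sum_i w_i\beta^{-i}$ drops continuously below $x$ while $w$ remains admissible by Lemma~\ref{L:number}(i), so $\varepsilon(x,\beta)|_n=w$ persists on an interval to the right of $\beta_0$ and $\sup B_w>\beta_0$. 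Fixing $\beta_1\in B_w$ with $\beta_1>\beta_0$, strict monotonicity of $\beta\mapsto\varepsilon(x,\beta)$ (Lemma~\ref{L:admin}(iii)) gives $\varepsilon(x,\beta_1)\succ\varepsilon(x,\beta_0)=w0^{\infty}$, so for large $k$ the prefix $\varepsilon(x,\beta_1)|_{n+k}$, and hence $w^{(k)}$, which dominates it lexicographically and still begins with $w$, carries a positive digit beyond position $n$; then $\sum_{i=1}^{n+k}(w^{(k)})_i\beta_0^{-i}>\sum_{i=1}^{n}w_i\beta_0^{-i}=x$, and since this sum decreases in $\beta$ and equals $x$ at $\underline{\beta}(w^{(k)})$ we conclude $\underline{\beta}(w^{(k)})>\beta_0$, whence $\overline{\beta}(w)\ge\underline{\beta}(w^{(k)})>\underline{\beta}(w)$. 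The real care lies in making the endpoint-continuity step rigorous and in treating the boundary case $\underline{\beta}(w)=1$ (where $\beta_0$ is not an admissible base, e.g.\ $w=0^n$), for which $B_w$ is instead described by a direct computation.
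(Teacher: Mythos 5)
The paper states this lemma without proof (it is quoted from \cite{LuWu}), so there is no in-paper argument to compare against and your proposal must stand on its own. Most of it does. Part (1) is complete and correct: the remainders $R_k(\beta)=\beta^k x-\sum_{i=1}^k w_i\beta^{k-i}$ satisfy $R_k(\underline{\beta}(w))\geq 0$ directly from the defining equation, your induction via $R_k'=R_{k-1}+\beta R_{k-1}'$ legitimately propagates nonnegativity and monotonicity, and comparison with a realizing base $b\geq \underline{\beta}(w)$ gives $R_k(\underline{\beta}(w))\leq R_k(b)=T_b^k x<1$, which pins down the greedy digits; this works equally at $x=1$. In part (2), the nestedness of the $w^{(k)}$, the monotonicity of $\underline{\beta}$ under extension, the boundedness of $B_w=\{\beta>1\colon \varepsilon_1(x,\beta)\cdots\varepsilon_n(x,\beta)=w\}$, and the deduction of strictness from the existence of some $\beta_1\in B_w$ with $\beta_1>\underline{\beta}(w)$ (a positive digit of $w^{(k)}$ beyond position $n$ forces $\underline{\beta}(w^{(k)})>\underline{\beta}(w)$) are all sound; so is your remark that $\underline{\beta}(w)=1$ is handled directly, since there any $\beta_1\in B_w$ works: $\varepsilon(x,\beta_1)=w0^{\infty}$ would force $x=0$ when $w=0^n$, or $\beta_1=1$ when $x=1$, $w=10^{n-1}$.

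The genuine gap is the nondegeneracy step $\sup B_w>\beta_0:=\underline{\beta}(w)$ in the case $\beta_0>1$, and the cylinder-endpoint justification you offer does not hold up. First, for $x=1$ with $\beta_0>1$ its premise is false: $\varepsilon(1,\beta_0)=w0^{\infty}$ means $1$ has a finite expansion in base $\beta_0$, whence $w0^{\infty}\succ\varepsilon^{\ast}(\beta_0)$, so by Lemma \ref{L:admin} one has $w\notin\Sigma_n(\beta_0)$ and there is no ``nonempty cylinder $[x,t)$'' at all --- you flagged $\underline{\beta}(w)=1$ as the boundary case, but that one is harmless; the dangerous case is $x=1$, $\underline{\beta}(w)>1$. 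Second, even for $x\in(0,1)$, knowing that the left endpoint $\sum_i w_i\beta^{-i}$ drops below $x$ while $w$ stays admissible (Lemma \ref{L:number}) does not place $x$ in the base-$\beta$ cylinder of $w$: the right endpoint --- equivalently the cylinder length, which for non-full cylinders can be much smaller than $\beta^{-n}$ --- is given no continuity or lower bound by your argument, and controlling it is exactly the content of the claim. (Invoking Lemma \ref{L:I(w)} here would be circular, since it presupposes $\overline{\beta}(w)$.) Fortunately the repair lies inside your own part (1): $\varepsilon_1(x,\beta)\cdots\varepsilon_n(x,\beta)=w$ if and only if $R_k(\beta)\in[0,1)$ for $1\leq k\leq n$; your induction uses only $R_k(\beta_0)\geq 0$, hence shows each $R_k$ is nondecreasing on all of $[\beta_0,+\infty)$, so $R_k(\beta)\geq 0$ there, and since $R_k(\beta_0)<1$ for $1\leq k\leq n$ (with $R_n(\beta_0)=0$) and the $R_k$ are polynomials, continuity yields $\delta>0$ with $[\beta_0,\beta_0+\delta)\subseteq B_w$. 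This gives nondegeneracy uniformly for $x\in(0,1]$, with no reference to cylinders in $[0,1)$, after which your strictness argument closes the proof.
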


For any $n\in \mathbb{N}$ and $w\in\Omega_{n}(x)$, define $$I(w)=\{\beta>1\colon\varepsilon_{1}(x,\beta)\cdots\varepsilon_{n}(x,\beta)=w\}$$ and call it a cylinder of order $n$ in the parameter space $\{\beta\in\mathbb{R}\colon\beta>1\}$.

\begin{lemma}[\cite{LuWu}]\label{L:I(w)}
  Let $w\in\Omega_{n}(x)$ with $n\in \mathbb{N}$. \begin{itemize}\item If $\underline{\beta}(w)>1$, the cylinder $I(w)$ is a half open interval $[\underline{\beta}(w),\overline{\beta}(w))$;

  \item If $\underline{\beta}(w)=1$, the cylinder $I(w)$ is an open interval $(1,\overline{\beta}(w))$.

  \item The length $|I(w)|$ of the interval $I(w)$ satisfies $|I(w)|\leq x^{-1}\left(\overline{\beta}(w)\right)^{1-n}$.\end{itemize}
\end{lemma}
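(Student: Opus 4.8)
The backbone of the argument is the strict monotonicity of the coding map $\beta\mapsto\varepsilon(x,\beta)$ recorded in Lemma~\ref{L:admin}(iii). The plan is to first use it to show that $I(w)$ is an interval, then to locate the two endpoints at $\underline{\beta}(w)$ and $\overline{\beta}(w)$ with the help of Lemma~\ref{L:unovbeta}, and finally to bound the length by comparing the equations defining $x$ at the two ends. Write $\underline b=\underline{\beta}(w)$, $\overline b=\overline{\beta}(w)$ and $b_k=\underline{\beta}(w^{(k)})$. For the interval structure, suppose $\beta_1<\beta'<\beta_2$ with $\beta_1,\beta_2\in I(w)$; by Lemma~\ref{L:admin}(iii) we have $\varepsilon(x,\beta_1)\prec\varepsilon(x,\beta')\prec\varepsilon(x,\beta_2)$, and since the outer two share the length-$n$ prefix $w$, the first $n$ digits $v$ of $\varepsilon(x,\beta')$ cannot satisfy $v\prec w$ (this would give $\varepsilon(x,\beta')\prec w0^\infty\preceq\varepsilon(x,\beta_1)$) nor $v\succ w$ (this would give $\varepsilon(x,\beta_2)\prec\varepsilon(x,\beta')$); hence $v=w$ and $\beta'\in I(w)$.

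For the left endpoint, if $\underline b>1$ then Lemma~\ref{L:unovbeta}(1) gives $\varepsilon(x,\underline b)=w0^\infty$, so $\underline b\in I(w)$, while for $1<\beta<\underline b$ monotonicity forces $\varepsilon(x,\beta)\prec w0^\infty$, which cannot begin with $w$; thus $\underline b=\min I(w)$ and the left end is closed. If $\underline b=1$ then by \eqref{E:beta=1or} either $x<1,w=0^n$ or $x=1,w=10^{n-1}$, and a direct check via \eqref{E:x=Tn} (e.g. $\varepsilon_i(x,\beta)=\lfloor\beta^i x\rfloor=0$ for $i\le n$ once $w=0^n$) shows $\varepsilon(x,\beta)$ begins with $w$ for all $\beta$ near $1$, so $\inf I(w)=1$ is not attained and the interval is open on the left. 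For the right endpoint, each $w^{(k)}$ begins with $w$, so Lemma~\ref{L:unovbeta}(1) applied to $w^{(k)}$ yields $\varepsilon(x,b_k)=w^{(k)}0^\infty$, whence $b_k\in I(w)$ and, by Lemma~\ref{L:unovbeta}(2), $b_k\uparrow\overline b$, giving $\overline b\le\sup I(w)$. Conversely, for $\beta\in I(w)$ the prefix $\varepsilon(x,\beta)|_{n+k}$ extends $w$, hence $\preceq w^{(k)}$ by maximality; if it is $\prec w^{(k)}$ for some $k$ then $\varepsilon(x,\beta)\prec w^{(k)}0^\infty=\varepsilon(x,b_k)$ and $\beta<b_k\le\overline b$, and the only alternative, that all prefixes equal $w^{(k)}$, forces $\varepsilon(x,\beta)=\lim_k w^{(k)}=:W$ and then $\beta=\overline b$ (pass to the limit in $x=\sum_{i\le n+k}w^{(k)}_i b_k^{-i}$ and use that $\sum_i W_i s^{-i}$ is strictly decreasing in $s$, as $x>0$). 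Hence $\sup I(w)=\overline b$.

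It remains to exclude $\overline b$ from $I(w)$, and this I expect to be the delicate point, since it encodes the left-discontinuity of the greedy expansion at the transition base $\overline b$. The cleanest route is the right-continuity of $\beta\mapsto\varepsilon(x,\beta)$: each digit $\varepsilon_i(x,\beta)=\lfloor\beta T^{i-1}_\beta x\rfloor$ is right-continuous in $\beta$ (the floor is right-continuous and $T^{i-1}_\beta x$ is right-continuous by induction on $i$). As $\overline b=\sup I(w)$, every $\beta>\overline b$ has $\varepsilon(x,\beta)|_n\succ w$, so the word $\varepsilon(x,\beta)|_n$ is constant $\succ w$ on some $(\overline b,\overline b+\delta)$; letting $\beta\downarrow\overline b$ and using right-continuity gives $\varepsilon(x,\overline b)|_n\ne w$, i.e. $\overline b\notin I(w)$. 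Combined with the endpoint analysis this yields $I(w)=[\underline b,\overline b)$ when $\underline b>1$ and $I(w)=(1,\overline b)$ when $\underline b=1$.

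Finally, for the length bound, equation \eqref{E:x=Tn} applied at $\beta\in I(w)$ gives $x-\sum_{i=1}^n w_i\beta^{-i}=\beta^{-n}T^n_\beta x\in[0,\beta^{-n})$; letting $\beta\uparrow\overline b$ within $I(w)$ yields the key estimate $x-\sum_{i=1}^n w_i\overline b^{-i}\le\overline b^{-n}$. When $\underline b>1$ we also have $x=\sum_{i=1}^n w_i\underline b^{-i}$, so subtracting and using $\underline b^{-i}-\overline b^{-i}=\underline b^{-i}\bigl(1-(\underline b/\overline b)^i\bigr)\ge\underline b^{-i}(\overline b-\underline b)/\overline b$ for $i\ge1$ gives $\tfrac{\overline b-\underline b}{\overline b}\,x\le\overline b^{-n}$, that is, $|I(w)|=\overline b-\underline b\le x^{-1}\overline b^{1-n}$. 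The two cases $\underline b=1$ follow directly from the key estimate: for $w=0^n$ it reads $x\le\overline b^{-n}$, whence $|I(w)|=\overline b-1\le\overline b\le x^{-1}\overline b^{1-n}$, and for $x=1,w=10^{n-1}$ it reads $\overline b-1\le\overline b^{1-n}=x^{-1}\overline b^{1-n}$. (Proposition~\ref{P:asteps} provides the tail bound underlying the estimate $\beta^{-n}T^n_\beta x<\beta^{-n}$.) This completes the plan for Lemma~\ref{L:I(w)}.
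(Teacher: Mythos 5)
The paper itself never proves this lemma: it is imported verbatim from \cite{LuWu}, so there is no internal proof to compare against and your argument has to stand alone. It essentially does, and its backbone — strict monotonicity of $\beta\mapsto\varepsilon(x,\beta)$ from Lemma \ref{L:admin}$(iii)$ to get order-convexity of $I(w)$, the points $b_k=\underline{\beta}(w^{(k)})\in I(w)$ accumulating at $\overline{\beta}(w)$ via Lemma \ref{L:unovbeta} to pin down the right endpoint, and the subtraction of the two defining equations for $x$ at $\underline{\beta}(w)$ and $\overline{\beta}(w)$ to get the length — is exactly the circle of ideas behind the original proof. Your length computation deserves credit for recovering the correct exponent: a naive mean-value estimate using (\ref{E:f'(beta)}) only gives $|I(w)|\le x^{-1}(\underline{\beta}(w))^{1-n}$, whereas your elementary inequality $\underline{b}^{-i}-\overline{b}^{-i}\ge \underline{b}^{-i}(\overline{b}-\underline{b})/\overline{b}$ yields the stronger bound $x^{-1}(\overline{\beta}(w))^{1-n}$ stated in the lemma, and your three-case treatment (including $w=0^n$ and $w=10^{n-1}$) is correct.

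Two points need patching, one of them genuinely. Minor: applying Lemma \ref{L:unovbeta}(1) to $w^{(k)}$ silently assumes $\underline{\beta}(w^{(k)})>1$. This is true but should be said: by (\ref{E:beta=1or}) the only offenders would be $w^{(k)}=0^{n+k}$ (for $x<1$) or $10^{n+k-1}$ (for $x=1$), and since $I(0^{m})=(1,x^{-1/m})$ and $I(10^{m-1})=\{\beta>1\colon\beta^{m-1}(\beta-1)<1\}$ strictly shrink as $m$ grows, the lexicographically \emph{largest} extension of $w$ is never the all-further-zeros word. More substantive: your justification of right-continuity of $\beta\mapsto\varepsilon_i(x,\beta)$ — ``the floor is right-continuous and $T^{i-1}_\beta x$ is right-continuous by induction'' — is not valid as a composition principle: the floor of a merely right-continuous inner function need not be right-continuous, because $\beta T^{i-1}_{\beta}x$ could a priori approach its limit \emph{from below} as $\beta\downarrow\beta_0$, exactly where the floor jumps. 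The induction is salvageable inside your own framework: once digits $1,\dots,i-1$ are constant $=w|_{i-1}$ on $[\beta_0,\beta_0+\delta)$, one has $T^{i-1}_\beta x=f_{w|_{i-1}}(\beta)$ there, which is increasing by (\ref{E:f'(beta)}), so $\beta T^{i-1}_\beta x$ decreases to its value at $\beta_0$, i.e.\ approaches it from above, and the floor is constant on $[t_0,\lfloor t_0\rfloor+1)$. Equivalently and more cleanly: the prefix equals $w|_i$ iff $0\le x-\sum_{l\le j}w_l\beta^{-l}<\beta^{-j}$ for all $j\le i$; the lower inequality is automatically preserved as $\beta$ increases (each $w_l\beta^{-l}$ decreases), and the strict upper inequality is an open condition in $\beta$. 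With that one repair, your proof of the lemma is complete and correct.
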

It is trivial that if $u\in\Omega_{m}(x)$ is a prefix of $w\in \Omega_{n}(x)$, one has
$$I(w)\subset I(u)\quad \text{and}  \quad\underline{\beta}(u)\leq \underline{\beta}(w)<\overline{\beta}(w)\leq \overline{\beta}(u).$$

\subsection{Full cylinders in parameter space}
For any $n\in \mathbb{N}$ and $w\in \Omega_{n}(x)$, define the function $f_{w}\colon (1,+\infty)\rightarrow [0,+\infty)$ by
\begin{equation}\label{f2}
  f_{w}(\beta)=
  \beta^{n}\left(x-\sum_{i=1}^{n}\frac{w_{i}}{\beta^{i}}\right),\quad\beta\in (1,+\infty).
\end{equation}
In fact, on the interval $I(w)$, the function $f_{w}(\beta)$ is just $T^{n}_{\beta}x$ by $(\ref{E:x=Tn})$ by viewing the latter as a function of $\beta$ since $x$ is fixed. Note that the function $f_{w}$ is continuous and strictly increasing on the interval $I(w)$, since
\begin{align}\label{E:f'(beta)}
    f'_{w}(\beta)&=\beta^{n-1}\left(nx-\sum_{i=1}^{n-1}\frac{(n-i)w_i}{\beta^i}\right)\ge x\beta^{n-1}>0.
\end{align}

Write $$
J(w):=\{T^{n}_{\beta}x\colon\beta\in I(w)\}=f_w(I(w)),
$$ which is a  subinterval of $[0,1)$ since $I(w)$ is an interval. More precisely, by Lemma \ref{L:I(w)},
\begin{itemize}
  \item if $\underline{\beta}(w)>1$, then $J(w)=[0,t)$ for some $t\in (0,1]$ with $f_{w}(\overline{\beta}(w))=t$;
  \item if $\underline{\beta}(w)=1$, then $J(w)=(x,t)$ or $(0,t)$ for $x\in (0,1)$ or $x=1$, respectively (see $(\ref{E:beta=1or})$).
\end{itemize}

In analogy with full cylinders in the beta-expansion for a fixed $\beta$, the same notion can also be defined in the parameter space. Let $w\in \Omega_{n}(x)$ with $n\in \mathbb{N}$. Call $I(w)$ a {\em full cylinder} of order $n$ in the parameter space $\{\beta\in \mathbb{R}\colon \beta>1\}$ if $J(w)=[0,1)$.

For any $n\in \mathbb{N}$, let
\begin{equation*}
    \Lambda_{n}(x)=\{w\in \Omega_{n}(x)\colon J(w)=[0,1)\},
\end{equation*}
i.e., the collection of all $w\in \Omega_{n}(x)$ such that $I(w)$ is a full cylinder of order $n$ in the parameter space $\{\beta\in \mathbb{R}\colon \beta>1\}$. Note that by the discussion above, we have $\underline{\beta}(w)>1$ for all $w\in \Lambda_{n}(x)$.

\begin{lemma}[\cite{LuWu}]\label{L:Cful}
$(i)$  For any $n\in \mathbb{N}$ and $w\in \Omega_{n}(x)$ with $\underline{\beta}(w)>1$, we have $w\in \Lambda_{n}(x)$ if and only if $f_{w}(\overline{\beta}(w))=1$, i.e.,
\begin{equation*}
    x=\frac{w_{1}}{\overline{\beta}(w)}+\frac{w_{2}}{(\overline{\beta}(w))^{2}}+\cdots
+\frac{w_{n}}{(\overline{\beta}(w))^{n}}+\frac{1}{(\overline{\beta}(w))^{n}}.
  \end{equation*}
$(ii)$  Let $w=w_{1}\cdots w_{n-1}w_{n}$ be a word of nonnegative integers with $n\ge 2$. If $w_{1}\cdots w_{n-1}w^{+}_{n}\in \Omega_{n}(x)$, then $w\in \Omega_{n}(x)$. If furthermore $\underline{\beta}(w)>1$, then $w\in \Lambda_{n}(x)$.
\\
$(iii)$  For any $n\in \mathbb{N}$, among every $n+1$ consecutive cylinders of order $n$ in the parameter space $\{\beta\in \mathbb{R}\colon \beta>1\}$, there exists at least one full cylinder.
\end{lemma}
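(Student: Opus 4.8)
The plan is to establish the three parts in order, using throughout two facts recorded in the excerpt: on each cylinder the function $f_{w}$ is continuous and strictly increasing by \eqref{E:f'(beta)}, and by Lemma~\ref{L:I(w)} one has $I(w)=[\underline{\beta}(w),\overline{\beta}(w))$ whenever $\underline{\beta}(w)>1$. For part $(i)$ I would simply combine the displayed description of $J(w)$ given just before the lemma with this monotonicity. Since $\underline{\beta}(w)>1$ forces $x=\sum_{i=1}^{n}w_{i}\underline{\beta}(w)^{-i}$, we get $f_{w}(\underline{\beta}(w))=0$, and by continuity and strict monotonicity the image is $J(w)=[0,f_{w}(\overline{\beta}(w)))$, where $f_{w}(\overline{\beta}(w))$ is the limit at the right endpoint. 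Thus $J(w)=[0,1)$, i.e. $w\in\Lambda_{n}(x)$, exactly when $f_{w}(\overline{\beta}(w))=1$, and rewriting this equality through \eqref{f2} produces the displayed formula. This part is essentially a one-line consequence of the endpoint values of $f_{w}$.

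For part $(ii)$ write $u=w_{1}\cdots w_{n-1}$ and $w'=uw_{n}^{+}$, and use two elementary observations. First, on $I(u)$ the $n$-th digit equals $\varepsilon_{n}(x,\beta)=\lfloor\beta f_{u}(\beta)\rfloor$, and $\beta\mapsto\beta f_{u}(\beta)$ is continuous, strictly increasing, and starts from $0$ at the left endpoint; hence its floor attains every integer from $0$ up to its maximum. Since $w'\in\Omega_{n}(x)$ means the value $w_{n}+1$ is attained, the value $w_{n}$ is attained too, giving $w\in\Omega_{n}(x)$. Second, a direct computation from \eqref{f2} gives the identity $f_{w}(\beta)=f_{w'}(\beta)+1$ for all $\beta$, because $w'$ and $w$ differ only in the last digit by one. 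Now $w'$ is the immediate lexicographic successor of $w$ among words of length $n$, and since the cylinders tile the parameter space consistently with the monotonicity in Lemma~\ref{L:admin}$(iii)$, the cylinders $I(w)$ and $I(w')$ are adjacent, so $\overline{\beta}(w)=\underline{\beta}(w')$. When $\underline{\beta}(w)>1$ we also have $\underline{\beta}(w')>1$, hence $f_{w'}(\underline{\beta}(w'))=0$, and therefore $f_{w}(\overline{\beta}(w))=f_{w'}(\underline{\beta}(w'))+1=1$; part $(i)$ then yields $w\in\Lambda_{n}(x)$.

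For part $(iii)$ the strategy is a descent on the order $n$. Part $(ii)$ shows that, inside a fixed order-$(n-1)$ cylinder $I(u)$ whose children have contiguous last digits $0,1,\dots,\deg(u)$, every non-top child with $\underline{\beta}>1$ is full; consequently any non-full cylinder of order $n$, apart from the single leftmost one with $\underline{\beta}=1$, must be the top child of its parent. Examining the predecessor of such a top child then forces the following: in any run of consecutive non-full cylinders of order $n$, every cylinder after the first must be the unique child $u0$ of a parent $u$ with $\deg(u)=0$, and a parent with $\deg(u)=0$ is itself non-full (a full cylinder has $\deg\ge 1$). These parents are moreover consecutive of order $n-1$, so a run of $m+1$ consecutive non-full cylinders of order $n$ produces a run of $m$ consecutive non-full cylinders of order $n-1$. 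At order $1$ only the leftmost cylinder can fail to be full, since $a+1\in\Omega_{1}(x)$ for every admissible $a$; thus the maximal run length is $1$ at order $1$, and the descent bounds it by $n$ at order $n$. Hence no $n+1$ consecutive cylinders can all be non-full, which is exactly $(iii)$.

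The hardest part will be $(iii)$, specifically organizing the descent cleanly: verifying that after its first element a run of non-full cylinders consists only of single-child cylinders, that single-child parents are genuinely non-full, and that the anomalous leftmost cylinders with $\underline{\beta}(w)=1$ contribute at most one extra non-full cylinder per level and so do not inflate the bound. A secondary point requiring care is the adjacency $\overline{\beta}(w)=\underline{\beta}(w')$ used in $(ii)$, which rests on the cylinders covering $(1,+\infty)$ with no gaps; I would justify it from the strict monotonicity of $\beta\mapsto\varepsilon(x,\beta)$ in Lemma~\ref{L:admin}$(iii)$ together with the fact that every $\beta$ determines some length-$n$ prefix.
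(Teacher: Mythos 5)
There is no in-paper proof to compare against: the paper states Lemma~\ref{L:Cful} as a quotation from \cite{LuWu}, its only comment being that item $(iii)$ goes back to Persson and Schmeling \cite{PeSc} for $x=1$. So I assess your argument on its own merits, and it is essentially correct. Parts $(i)$ and $(ii)$ are sound: for $(i)$, the paper's discussion preceding the lemma already records $J(w)=[0,t)$ with $t=f_{w}(\overline{\beta}(w))$ when $\underline{\beta}(w)>1$, so your one-line reduction is exactly right; for $(ii)$, the identity $f_{w}=f_{w'}+1$ together with the adjacency $\overline{\beta}(w)=\underline{\beta}(w')$ --- correctly grounded in the facts that no length-$n$ word lies strictly between $w$ and $w'=uw_{n}^{+}$ and that the order-$n$ cylinders tile $(1,+\infty)$ in lexicographic order by Lemma~\ref{L:admin}$(iii)$ --- yields $f_{w}(\overline{\beta}(w))=1$, whence $(i)$ applies. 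One slip: $\beta\mapsto\beta f_{u}(\beta)$ does not ``start from $0$'' when $\underline{\beta}(u)=1$ (for $u=0^{n-1}$ its infimum on $I(u)$ is $x$, not $0$); but since this infimum is always $<1$, the floor still runs through all of $\{0,\dots,\deg(u)\}$, so the contiguity of children and the first assertion of $(ii)$ are unaffected.

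In $(iii)$, your blanket claim that every run element after the first is a singleton child $u0$ with $\deg(u)=0$ fails precisely in the corner case you flag: if $v^{(1)}$ is the leftmost cylinder (the only one with $\underline{\beta}=1$, automatically non-full), then $v^{(2)}$ may instead be the top child $u1$ of the leftmost order-$(n-1)$ word $u$, with $\deg(u)=1$. The count survives this exception: that $u$ has $\underline{\beta}(u)=1$, hence is non-full, and $I(u)=I(v^{(1)})\cup I(v^{(2)})$ is still adjacent to the later parents, so a run of $m+1$ consecutive non-full cylinders at order $n$ again produces $m$ at order $n-1$. Two further patches are needed. First, justify ``full implies $\deg\ge 1$'' directly: if $u$ is full then $\beta f_{u}(\beta)\to\overline{\beta}(u)>1$ as $\beta\uparrow\overline{\beta}(u)$, so the top digit is at least $1$; do not reach for Lemma~\ref{L:CcC} here, since its proof in this paper invokes item $(ii)$ of the present lemma and would be circular. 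Second, the order-$1$ base case cannot cite $(ii)$, which is stated only for $n\ge 2$; instead compute directly that $I(a)=[a/x,(a+1)/x)\cap(1,+\infty)$ and $f_{a}(\beta)=\beta x-a$, so every order-$1$ cylinder except the leftmost is full. With these repairs the recursion $R_{n}\le R_{n-1}+1$, $R_{1}=1$ bounds a run of non-full cylinders by $n$, which is exactly $(iii)$.
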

Item $(iii)$ in Lemma \ref{L:Cful} was observed by Persson \& Schmeling \cite{PeSc}  when $x=1$.

\section{Full cylinders in beta-expansion for fixed $\beta$}

The distribution properties of full cylinders for a fixed $\beta$ have been widely used in the study of the metric
properties of beta-expansions. The item $(iv)$ in Lemma \ref{L:full}
indicates that full cylinders are well distributed, which is enough for one to estimate the Hausdorff dimensions of related sets,
 see \cite{BuWa, FaWa, ShWa}, etc.

However, when dealing with the Lebesgue measures of related sets, the help of this lemma is limited, because it only guarantees us a small collection of full cylinders.
In fact, full cylinders take up a positive proportion among all cylinders as shown below. It will in turn ensure us a large collection of full cylinders in the parameter space (Lemma \ref{L:CcC}) which is a fundamental step to the proof of our main result (see the remark (2) before Theorem \ref{T:main}).

\begin{proposition}\label{P:numperf}
  Let $\beta>1$. Suppose that $\lambda\in(0, 1)$ is a real number such that
   \begin{equation*}
\lambda-\lambda\ln \lambda<\frac{(\beta-1)^{2}}{\beta^{3}}\ {\text{and}}\ \lambda<\beta^{-1}.
\end{equation*}
Then for any $n\geq -\log_{\beta} \lambda$, we have $\#\Xi_{n}(\beta)\geq \lambda\#\Sigma_{n}(\beta)$.
\end{proposition}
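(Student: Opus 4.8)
The plan is to recast the problem as a counting estimate for \emph{full} cylinders and to control their number through a linear recursion. Write $a_{n}=\#\Sigma_{n}(\beta)$ and $b_{n}=\#\Xi_{n}(\beta)$, and let $D_{n}=a_{n}-b_{n}$ be the number of non-full cylinders of order $n$. The first observation is a structural one: for $w\in\Sigma_{n-1}(\beta)$ the map $T^{n-1}_{\beta}$ sends $\mathfrak{I}(w)$ affinely onto an interval $[0,s)$, so the order-$n$ subcylinders of $\mathfrak{I}(w)$ are the pullbacks of the pieces $[j/\beta,(j+1)/\beta)\cap[0,s)$, all of which have length $\beta^{-n}$ except possibly the rightmost one. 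Hence every cylinder of order $n-1$ has \emph{at most one} non-full child, the parent map injects the non-full cylinders of order $n$ into $\Sigma_{n-1}(\beta)$, and therefore
\begin{equation*}
  \#\Xi_{n}(\beta)=a_{n}-D_{n}\ge a_{n}-a_{n-1}=\#\Sigma_{n}(\beta)-\#\Sigma_{n-1}(\beta).
\end{equation*}

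Since $a_{n}\le\beta^{n+1}/(\beta-1)$ by Lemma \ref{L:number}, it then suffices to prove that $r_{n}:=b_{n}\beta^{-n}\ge\lambda\beta/(\beta-1)$ for every $n\ge-\log_{\beta}\lambda$. To obtain a usable lower bound on $r_{n}$ I would sharpen the count above into an exact recursion. Following the rightmost-child chains shows that the non-full cylinders of order $m$ carry the orbit values $T^{j}_{\beta}1$ ($j\ge1$) with multiplicity $\#\Xi_{m-j}(\beta)$, while a cylinder whose $T^{m}_{\beta}$-image is $[0,s)$ has exactly $\lfloor\beta s\rfloor$ full children; combined with $\lfloor\beta T^{j}_{\beta}1\rfloor=\varepsilon_{j+1}(\beta)$ this yields
\begin{equation*}
  b_{n}=\sum_{i=1}^{n}\varepsilon_{i}(\beta)\,b_{n-i},\qquad b_{0}=1.
\end{equation*}

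Because $\sum_{i\ge1}\varepsilon_{i}(\beta)\beta^{-i}=1$ (Proposition \ref{P:asteps}), dividing by $\beta^{n}$ turns this into a \emph{renewal / convex-averaging} identity
\begin{equation*}
  r_{n}=\sum_{i=1}^{n}p_{i}\,r_{n-i},\qquad p_{i}=\varepsilon_{i}(\beta)\beta^{-i}\ge0,\quad \sum_{i=1}^{n}p_{i}=1-\beta^{-n}T^{n}_{\beta}1\ge 1-\beta^{-n},
\end{equation*}
with $r_{0}=1$ and $0\le r_{k}\le 1$ (the last bound because full cylinders are disjoint and of length $\beta^{-n}$). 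Each $r_{n}$ is thus a sub-convex combination of earlier values, and unfolding the recursion writes $r_{n}=\sum_{i_{1}+\cdots+i_{m}=n}p_{i_{1}}\cdots p_{i_{m}}$ as a sum over compositions of $n$; standard renewal asymptotics give $r_{n}\to 1/\sum_{i}i\,p_{i}$ and hence $b_{n}/a_{n}\to(\beta-1)/\beta$, comfortably above any $\lambda$ allowed by the hypothesis.

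The main obstacle is that this convergence is \emph{not} uniform as $\beta\downarrow 1$: when $\beta$ is close to $1$ the digits $\varepsilon_{i}(\beta)$ vanish for all $1<i\lesssim(\beta-1)^{-1}$, so the mass of $(p_{i})$ sits at large indices and $r_{n}$ actually decays like $\beta^{-n}$ throughout an initial ``dip'' (indeed $b_{n}=1$, the single cylinder $0^{n}$, while $n$ is small). This is precisely what forces the threshold $n\ge-\log_{\beta}\lambda$, i.e. $\beta^{-n}\le\lambda$: only past it can one guarantee that enough of the weights $p_{i}$ have been ``seen'' for the averaging to restore $r_{n}$ to order $(\beta-1)/\beta$. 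The crux of the proof is therefore an \emph{effective} lower bound for $r_{n}$ on $n\ge-\log_{\beta}\lambda$ that quantifies this recovery. I read the quantity $\lambda-\lambda\ln\lambda$ in the hypothesis as the footprint of that estimate, the $\ln\lambda$ reflecting the number $\asymp\log_{\beta}(1/\lambda)$ of scales one must sum (equivalently, the entropy of the relevant compositions), and the factor $(\beta-1)^{2}/\beta^{3}$ matching the size of $r_{n}$ one can certify at the threshold; I expect the bulk of the technical work to lie exactly in making this recovery estimate explicit with the stated constants.
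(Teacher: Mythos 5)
Your proposal does not close the proof, and the missing step is precisely the content of the proposition. Your opening observation (each cylinder of order $n-1$ has at most one non-full child, hence $\#\Xi_{n}(\beta)\ge\#\Sigma_{n}(\beta)-\#\Sigma_{n-1}(\beta)$) is exactly the paper's starting point, and your renewal recursion $b_{n}=\sum_{i=1}^{n}\varepsilon_{i}(\beta)b_{n-i}$ is in fact correct (it is essentially the full-word counting identity of Li--Li \cite{LiLi}; your rightmost-child chain analysis is sound, and the degenerate case $T_{\beta}^{j}1=0$ self-annihilates because then $\varepsilon_{j+1}(\beta)=0$). But from there you invoke ``standard renewal asymptotics'' and then explicitly defer the crux --- an \emph{effective} lower bound for $r_{n}=\#\Xi_{n}(\beta)\beta^{-n}$ valid for every $n\ge-\log_{\beta}\lambda$, uniformly in $\beta$, with the stated constants --- to unspecified future work. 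The renewal theorem gives only a limit, with no rate and no uniformity as $\beta\downarrow1$, which is exactly the regime you identify as the obstacle; so what remains open is the entire quantitative assertion. Moreover your stated limit $b_{n}/a_{n}\to(\beta-1)/\beta$ is false in general: for $\beta=2$ every cylinder is full and the ratio is $1$; since $a_{n}=\sum_{j\ge0,\,T^{j}_{\beta}1>0}b_{n-j}$, the true limit is $\bigl(\sum_{j\ge0:\,T^{j}_{\beta}1>0}\beta^{-j}\bigr)^{-1}$, which is merely bounded below by $(\beta-1)/\beta$. As written, then, this is a plausible program whose central estimate is absent.

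What makes the gap avoidable is that the paper finishes from your first inequality alone, with no renewal structure. Writing $a_{n}=\#\Sigma_{n}(\beta)$, $b_{n}=\#\Xi_{n}(\beta)$, telescoping $a_{i}-b_{i}\le a_{i-1}$ gives $\beta^{n}\le a_{n}\le\sum_{i=2}^{n}b_{i}+a_{1}$; one then adds the monotonicity $b_{i}\le b_{n}$ for $i\le n$ (if $w\in\Xi_{i}(\beta)$ then $w0\in\Xi_{i+1}(\beta)$, by items $(i)$--$(ii)$ of Lemma \ref{L:full}), which your write-up does not record. Now suppose $b_{n}<\lambda a_{n}\le\lambda\beta^{n+1}/(\beta-1)$ for some $n\ge-\log_{\beta}\lambda$, and split the sum at an integer $k$ with $n+\log_{\beta}\lambda<k\le n+1+\log_{\beta}\lambda$ (the hypothesis $\lambda<\beta^{-1}$ makes $k<n$, and $n\ge-\log_{\beta}\lambda$ makes $k\ge0$). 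Then, using $n-k<-\log_{\beta}\lambda$, $\beta^{k}\le\lambda\beta^{n+1}$, Lemma \ref{L:number}, and $\ln\beta\ge(\beta-1)/\beta$,
\begin{align*}
\sum_{i=2}^{n}b_{i}+a_{1}&\le (n-k)\,b_{n}+\sum_{i=1}^{k}a_{i}
\le \frac{-\lambda\ln\lambda}{\ln\beta}\cdot\frac{\beta^{n+1}}{\beta-1}+\frac{\lambda\beta^{n+3}}{(\beta-1)^{2}}\\
&\le \bigl(\lambda-\lambda\ln\lambda\bigr)\,\frac{\beta^{3}}{(\beta-1)^{2}}\cdot\beta^{n}<\beta^{n},
\end{align*}
contradicting $\beta^{n}\le\sum_{i=2}^{n}b_{i}+a_{1}$. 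This is where $\lambda-\lambda\ln\lambda$ and $(\beta-1)^{2}/\beta^{3}$ actually come from: the factor $-\ln\lambda$ counts the $\asymp\log_{\beta}(1/\lambda)$ top scales in the split sum --- close to your reading in spirit --- but the recovery estimate you postponed reduces to this elementary two-line bookkeeping rather than to any renewal or entropy analysis.
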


\begin{proof}
(1). At first, one notes that the sequence $\{\#\Xi_{n}(\beta)\}_{n\geq 1}$ is non-decreasing. In fact, for any $n\in \mathbb{N}$ and $w\in \Xi_{n}(\beta)$, one has that $w0\in \Xi_{n+1}(\beta)$. This is because $w\varepsilon^{\ast}_{1}(\beta)\in \Sigma_{n+1}(\beta)$ by item $(ii)$ in Lemma \ref{L:full}, then
item $(i)$ in Lemma \ref{L:full} is applied, since $\varepsilon^{\ast}_{1}(\beta)\geq 1$.

(2). Next, we show that
\begin{equation}\label{E:Signum}
    \#\Sigma_{n}(\beta)-\#\Xi_{n}(\beta)\leq \#\Sigma_{n-1}(\beta)\quad\text{for all } n\ge 2.
\end{equation}
Note that $$
\Sigma_{n}(\beta)=\bigcup_{u\in \Sigma_{n-1}(\beta)}\Big\{u\varsigma \in \Sigma_{n}(\beta): 0\le \varsigma\le \lfloor \beta \rfloor\Big\}.
$$ Among the latter set for a fixed $u\in \Sigma_{n-1}(\beta)$, by item $(i)$ in Lemma \ref{L:full}, only the word $u\varepsilon_{\text{max}}$ may not be in $\Xi_{n}(\beta)$ where $\varsigma_{\text{max}}$ is the maximal digit $\varsigma$ such that $u\varsigma\in \Sigma_{n}(\beta)$. This yields (\ref{E:Signum}).

(3). By an iteration of (\ref{E:Signum}) and Lemma \ref{L:number} on $\# \Sigma_n(\beta)$, one has
\begin{align}\label{E:rn}
    \beta^{n}&\leq \#\Sigma_{n}(\beta) \leq \#\Xi_{n}(\beta)+\#\Sigma_{n-1}(\beta)\leq \sum_{i=2}^n\#\Xi_{i}(\beta)+\#\Sigma_{1}(\beta).
  \end{align}
This equality enables us to conclude that $\#\Xi_{n}(\beta)$ should not be so small compared with $\#\Sigma_{n}(\beta)$. More precisely, assume $\#\Xi_{n}(\beta)< \lambda\#\Sigma_{n}(\beta)$ for some $n\geq -\log_{\beta} \lambda$. Then
by the monotonicity of $\# \Xi_{n}(\beta)$, for any $k<n$, \begin{align*}
 \sum_{i=2}^n\#\Xi_{i}(\beta)+\#\Sigma_{1}(\beta)\le (n-k)\# \Xi_n(\beta)+\sum_{i=1}^k\# \Sigma_i(\beta)\\
 \le (n-k)\lambda \frac{\beta^{n+1}}{\beta-1}+\sum_{i=1}^k\frac{\beta^{i+1}}{\beta-1}.
 \end{align*}
 Specifying the integer $k$ such that $$0\leq n+\log_{\beta}\lambda <k\leq n+1+\log_{\beta}\lambda<n,$$  one has
\begin{align*}
(n-k)\lambda \frac{\beta^{n+1}}{\beta-1}+\sum_{i=1}^k\frac{\beta^{i+1}}{\beta-1}\le \frac{-\lambda\ln \lambda}{\ln \beta}\cdot \frac{\beta^{n+1}}{\beta-1}+\frac{\lambda\beta^{n+3}}{(\beta-1)^2},
\end{align*}
which is smaller than $\beta^n$ when $\lambda$ is sufficient small.  This contradicts (\ref{E:rn}).
\end{proof}

\section{Full cylinders in parameter space}
From now on until to the the proof of Theorem \ref{T:rec} in Section 7, let $x\in (0,1]$ be a fixed real number. In this section, we aim at upper bound on the number of cylinders (Lemma \ref{L:winu}) and lower bound on the number of full cylinders (Lemma \ref{L:CcC}) in the parameter space.

The following close link between the cylinders in a fixed beta-expansion and those in the parameter space $\{\beta\in \mathbb{R}\colon\beta>1\}$
give us those  bounds effectively. 
\begin{lemma}\label{L:winu}
 Let $w\in\Omega_{n}(x)$ with $n\in \mathbb{N}$. Then for any $\beta>\underline{\beta}(w)\geq 1$, one has $w\in \Sigma_{n}(\beta)$. In particular, if $u$ is a prefix of $w$, then $w\in \Sigma_{n}(\overline{\beta}(u))$.
\end{lemma}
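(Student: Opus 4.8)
The plan is to combine the monotonicity of $\beta\mapsto\Sigma_{n}(\beta)$ with the fact that, by the very definition of the parameter cylinder, $w$ is a genuine expansion prefix for every base inside $I(w)$. Concretely, fix $\beta>\underline{\beta}(w)$. Since $\overline{\beta}(w)>\underline{\beta}(w)$, the interval $\big(\underline{\beta}(w),\min\{\beta,\overline{\beta}(w)\}\big)$ is a nonempty open subinterval of $I(w)$, and any $\beta'$ chosen there satisfies $\beta'<\beta$ and $\varepsilon_{1}(x,\beta')\cdots\varepsilon_{n}(x,\beta')=w$. If I can show $w\in\Sigma_{n}(\beta')$ for such a $\beta'$, then Lemma \ref{L:number}(i), i.e. $\Sigma_{n}(\beta')\subset\Sigma_{n}(\beta)$, immediately yields $w\in\Sigma_{n}(\beta)$. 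Thus the whole statement reduces to producing one suitable $\beta'\in I(w)$ with $w\in\Sigma_{n}(\beta')$, and this single reduction treats uniformly the bases inside and beyond $I(w)$.

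For $x\in(0,1)$ the reduction is essentially free: here $x\in[0,1)$, so $w=\varepsilon_{1}(x,\beta')\cdots\varepsilon_{n}(x,\beta')$ is literally a length-$n$ prefix of the $\beta'$-expansion of a point of $[0,1)$, which is exactly membership in $\Sigma_{n}(\beta')$. The substance of the lemma is therefore entirely in the endpoint case $x=1$, where $w$ is a prefix of the expansion of $1\notin[0,1)$ and need not be admissible on its own. I expect this to be the main difficulty, and it also explains why the strict inequality $\beta>\underline{\beta}(w)$ cannot be dropped: at $\beta=\underline{\beta}(w)$ the word $w$ may fail to lie in $\Sigma_{n}$ (for $x=1$, the word $w=11$ at the golden mean $\underline{\beta}(11)=\tfrac{1+\sqrt5}{2}$ is the typical obstruction, $11$ being inadmissible there).

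To handle $x=1$ I would first arrange that $\beta'$ additionally has an infinite expansion of $1$, i.e. $\varepsilon(\beta')$ does not end in $0^{\infty}$; this costs nothing because, by Lemma \ref{L:admin}(ii) together with the injectivity of $\beta\mapsto\varepsilon(\beta)$ from Lemma \ref{L:admin}(iii), the bases whose expansion of $1$ is finite correspond to finite admissible words and hence form a countable set, so the admissible $\beta'$ remain dense in $\big(\underline{\beta}(w),\min\{\beta,\overline{\beta}(w)\}\big)$. For this $\beta'$ one has $\varepsilon^{\ast}(\beta')=\varepsilon(\beta')=w\eta$, and since $\beta'>\underline{\beta}(w)$ forces $\sum_{i\le n}w_{i}\beta'^{-i}<1$, the tail satisfies $\eta\neq0^{\infty}$. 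I would then verify the admissibility criterion of Lemma \ref{L:admin}(i): for $0\le i\le n-1$,
\[
\sigma^{i}(w0^{\infty})=w_{i+1}\cdots w_{n}0^{\infty}\preceq w_{i+1}\cdots w_{n}\eta=\sigma^{i}\varepsilon(\beta')\preceq\varepsilon(\beta')=\varepsilon^{\ast}(\beta'),
\]
where the second $\preceq$ is Parry's inequality $\sigma^{i}\varepsilon(\beta')\prec\varepsilon(\beta')$ for $i\ge1$ (Lemma \ref{L:admin}(ii)), while for $i=0$ the first $\preceq$ is already strict because $\eta\neq0^{\infty}$; in either case the comparison is strict. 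For $i\ge n$ the shift is $0^{\infty}\prec\varepsilon^{\ast}(\beta')$ trivially. Hence $\sigma^{i}(w0^{\infty})\prec\varepsilon^{\ast}(\beta')$ for all $i\ge0$, so $w0^{\infty}$ is the $\beta'$-expansion of some $y\in[0,1)$, whose first $n$ digits are $w$, giving $w\in\Sigma_{n}(\beta')$.

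Finally, the monotonicity step closes the argument: $w\in\Sigma_{n}(\beta')$ and $\beta'<\beta$ give $w\in\Sigma_{n}(\beta)$ for every $\beta>\underline{\beta}(w)$. The ``in particular'' assertion is then immediate from the nesting recorded after Lemma \ref{L:I(w)}: if $u$ is a prefix of $w$, then $\overline{\beta}(u)\ge\overline{\beta}(w)>\underline{\beta}(w)$, so $\overline{\beta}(u)>\underline{\beta}(w)$ and the main part applies with $\beta=\overline{\beta}(u)$ to yield $w\in\Sigma_{n}(\overline{\beta}(u))$.
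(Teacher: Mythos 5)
Your proof is correct, but it takes a genuinely different route from the paper's. The paper works at the left endpoint of the cylinder: Lemma \ref{L:unovbeta} gives $\varepsilon(x,\underline{\beta}(w))=w0^{\infty}$, so Parry's criterion (Lemma \ref{L:admin}, items $(i)$/$(ii)$) yields $\sigma^{i}w0^{\infty}\preceq\varepsilon(\underline{\beta}(w))$, and the cross-base strict inequality $\varepsilon(\beta_{1})\prec\varepsilon^{\ast}(\beta_{2})$ of Proposition \ref{P:asteps}$(ii)$ then transfers admissibility to every $\beta>\underline{\beta}(w)$ in one stroke, with the degenerate case $\underline{\beta}(w)=1$ (i.e. $w=0^{n}$ or $10^{n-1}$) checked separately. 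You instead work at an interior base $\beta'\in\big(\underline{\beta}(w),\min\{\beta,\overline{\beta}(w)\}\big)\subset I(w)$ and transfer via the monotonicity $\Sigma_{n}(\beta')\subset\Sigma_{n}(\beta)$ of Lemma \ref{L:number}$(i)$; for $x\in(0,1)$ membership in $\Sigma_{n}(\beta')$ is then definitional since $x\in[0,1)$, while for $x=1$ you make a countability/density selection of $\beta'$ with infinite expansion of $1$, so that $\varepsilon^{\ast}(\beta')=\varepsilon(\beta')=w\eta$ with $\eta\neq 0^{\infty}$, and verify Parry's criterion directly (strictness coming from $\eta\neq 0^{\infty}$ for $0\le i\le n-1$ and from Lemma \ref{L:admin}$(ii)$ for the shifts of $\varepsilon(\beta')$); the countability of the bases with finite expansion of $1$ via injectivity of $\beta\mapsto\varepsilon(\beta)$ is also sound. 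What your route buys: it treats $\underline{\beta}(w)=1$ and $\underline{\beta}(w)>1$ uniformly, dispenses with Lemma \ref{L:unovbeta}$(1)$ and Proposition \ref{P:asteps}$(ii)$ altogether, and correctly isolates where the content lies (the endpoint case $x=1$) --- your golden-mean example for why $\beta>\underline{\beta}(w)$ must be strict is apt. What it costs: a nonconstructive selection step and a reliance on $\overline{\beta}(w)>\underline{\beta}(w)$ (Lemma \ref{L:unovbeta}$(2)$) to guarantee the interval of candidate $\beta'$ is nonempty, whereas the paper's argument is shorter and purely lexicographic once Proposition \ref{P:asteps}$(ii)$ is available. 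The ``in particular'' step is handled identically in both proofs.
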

\begin{proof}
If $\underline{\beta}(w)=1$, then by $(\ref{E:beta=1or})$, we have $w=0^{n}$ or $10^{n-1}$.
It is easy to see that $w\in \Sigma_{n}(\beta)$ by item $(i)$ in Lemma \ref{L:admin} on the criterion of admissibility.

If $\underline{\beta}(w)>1$, then by Lemma \ref{L:unovbeta}, we have $\varepsilon(x,\underline{\beta}(w))=w0^{\infty}$. Since $\varepsilon^{\ast}(\underline{\beta}(w))\preceq \varepsilon(\underline{\beta}(w))$, by item $(i)$ for $x\in (0,1)$ and item $(ii)$ for $x=1$ in Lemma \ref{L:admin}, we obtain that
\begin{equation*}
\sigma^{i}w0^{\infty}\preceq \varepsilon(\underline{\beta}(w))\prec \varepsilon^{\ast}(\beta)\quad \text{ for all } i\geq 0,
\end{equation*} where the last inequality follows from item $(ii)$ in Proposition \ref{P:asteps}.
Then by item $(i)$ in Lemma \ref{L:admin} again, it follows that $w\in \Sigma_n(\beta)$.

If $u$ is a prefix of $w$, then $\overline{\beta}(u)>\underline{\beta}(w)$,
 and thus $w\in \Sigma_{n}(\overline{\beta}(u))$.
\end{proof}

 The following result, together with Proposition \ref{P:numperf}, gives us a sufficiently large collection of full cylinders in the parameter space.
\begin{lemma}\label{L:CcC}
  Let $w\in \Lambda_{n}(x)$ with $n\in \mathbb{N}$. Then for any $m\in \mathbb{N}$ and  $v\in \Sigma_{m}(\underline{\beta}(w))$, we have $wv\in \Omega_{n+m}(x)$.
  Furthermore, if $v\in \Xi_{m}(\underline{\beta}(w))$, then $wv\in \Lambda_{n+m}(x)$.
\end{lemma}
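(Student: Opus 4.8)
The plan is to exploit that, since $w\in\Lambda_{n}(x)$, the map $f_{w}\colon I(w)\to[0,1)$ is a continuous, strictly increasing bijection with $I(w)=[\underline{\beta}(w),\overline{\beta}(w))$, $f_{w}(\underline{\beta}(w))=0$ and $f_{w}(\overline{\beta}(w))=1$ (the last equality being exactly the fullness of $I(w)$, Lemma \ref{L:Cful}$(i)$). Write $\beta_{0}:=\underline{\beta}(w)$ and, for $v=v_{1}\cdots v_{m}$, set $\ell(\beta):=\sum_{j=1}^{m}v_{j}\beta^{-j}$, the left endpoint of the base-$\beta$ cylinder $\mathfrak{I}(v)$. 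The key observation is that for every $\beta\ge\beta_{0}$ one has $v\in\Sigma_{m}(\beta_{0})\subseteq\Sigma_{m}(\beta)$ by Lemma \ref{L:number}$(i)$, so $\mathfrak{I}(v)$ is a nonempty subinterval of $[0,1)$ in every such base and in particular $\ell(\beta)<1$. Moreover, for $\beta\in I(w)$ the digits $\varepsilon_{n+1}(x,\beta),\dots$ of $x$ coincide with the base-$\beta$ digits of $T^{n}_{\beta}x=f_{w}(\beta)$, which is the mechanism that lets me read off the concatenation $wv$.

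For the first assertion I would run an intermediate value argument. Consider $\phi(\beta):=f_{w}(\beta)-\ell(\beta)$ on $I(w)$. It is continuous, $\phi(\beta_{0})=-\ell(\beta_{0})\le 0$, and $\lim_{\beta\to\overline{\beta}(w)^{-}}\phi(\beta)=1-\ell(\overline{\beta}(w))>0$ since $\ell(\overline{\beta}(w))<1$. Hence there is $\beta_{*}\in I(w)$ with $f_{w}(\beta_{*})=\ell(\beta_{*})$, i.e. $T^{n}_{\beta_{*}}x$ equals the left endpoint of $\mathfrak{I}(v)$ in base $\beta_{*}$; by Lemma \ref{L:admin}$(i)$ that endpoint has base-$\beta_{*}$ expansion $v0^{\infty}$, so $\varepsilon_{n+1}(x,\beta_{*})\cdots\varepsilon_{n+m}(x,\beta_{*})=v$ and therefore $wv\in\Omega_{n+m}(x)$.

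For the second assertion I first upgrade $v\in\Xi_{m}(\beta_{0})$ to fullness at \emph{every} $\beta\ge\beta_{0}$. This monotonicity follows from Lemma \ref{L:fulcyl}$(i)$: fullness of $v$ is the family of inequalities $v_{i+1}\cdots v_{m-1}v_{m}^{+}\preceq\varepsilon_{1}(\beta)\cdots\varepsilon_{m-i}(\beta)$ for $0\le i\le m-1$, and since $\beta\mapsto\varepsilon(\beta)$ is lexicographically increasing (from $\varepsilon(\beta_{0})\prec\varepsilon^{\ast}(\beta)\preceq\varepsilon(\beta)$ via Proposition \ref{P:asteps}), each prefix $\varepsilon_{1}(\beta)\cdots\varepsilon_{m-i}(\beta)$ can only grow with $\beta$, so the inequalities persist for all $\beta\ge\beta_{0}$. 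Thus $\mathfrak{I}(v)=[\ell(\beta),\ell(\beta)+\beta^{-m})$ for all $\beta\in I(w)$, with $T^{m}_{\beta}z=\beta^{m}(z-\ell(\beta))$ on it. Now fix any $y\in[0,1)$ and apply the intermediate value theorem to $\psi_{y}(\beta):=f_{w}(\beta)-\ell(\beta)-y\beta^{-m}$: here $\psi_{y}(\beta_{0})\le 0$, while $\lim_{\beta\to\overline{\beta}(w)^{-}}\psi_{y}(\beta)=1-\ell(\overline{\beta}(w))-y\,\overline{\beta}(w)^{-m}>0$, because $\ell(\beta)+\beta^{-m}\le 1$ (right endpoint of a full cylinder) and $y<1$. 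The resulting zero $\beta_{y}\in I(w)$ satisfies $f_{w}(\beta_{y})=\ell(\beta_{y})+y\beta_{y}^{-m}\in\mathfrak{I}(v)$, so $\beta_{y}\in I(wv)$, and $T^{n+m}_{\beta_{y}}x=\beta_{y}^{m}\big(f_{w}(\beta_{y})-\ell(\beta_{y})\big)=y$. Hence $J(wv)\supseteq[0,1)$, and as $J(wv)\subseteq[0,1)$ is an interval, $J(wv)=[0,1)$, i.e. $wv\in\Lambda_{n+m}(x)$.

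I expect the main obstacle to be the persistence of fullness across the whole parameter interval $[\beta_{0},\overline{\beta}(w))$: the zeros $\beta_{*},\beta_{y}$ produced by the intermediate value theorem land at bases strictly larger than $\beta_{0}$, so the full-cylinder structure of $v$ must be shown to survive the increase of $\beta$, and this is precisely where the monotonicity of $\varepsilon(\beta)$ (through Proposition \ref{P:asteps} and Lemma \ref{L:fulcyl}$(i)$) is essential. The remaining work is bookkeeping at the right endpoint $\overline{\beta}(w)$: checking that the limits of $\phi$ and $\psi_{y}$ are strictly positive (so the zeros are genuinely interior to $I(w)$) and that the strict inequality $y<1$ keeps $f_{w}(\beta_{y})$ inside the half-open cylinder $\mathfrak{I}(v)$.
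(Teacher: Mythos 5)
Your proof is correct, and its second half takes a genuinely different route from the paper's. For the first assertion the two arguments essentially coincide: the paper defines $\beta_{1}$ as the unique positive root of $x=\sum_{i=1}^{n}w_{i}\beta^{-i}+\sum_{j=1}^{m}v_{j}\beta^{-(n+j)}$, which is exactly your equation $f_{w}(\beta)=\ell(\beta)$, and it locates $\beta_{1}$ inside $I(w)$ by the same two facts your intermediate value endpoints encode ($v\in\Sigma_{m}(\beta_{1})$ gives $y<1$, and Lemma \ref{L:Cful}$(i)$ gives $f_{w}(\overline{\beta}(w))=1$, forcing $\beta_{1}<\overline{\beta}(w)$); the difference is only existence-by-IVT versus existence-by-explicit-root. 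For the second assertion you diverge: the paper never computes $J(wv)$ directly, but instead runs the same root construction on the incremented word $wv_{1}\cdots v_{m-1}v_{m}^{+}$, establishing its membership in $\Omega_{n+m}(x)$ via Lemma \ref{L:fulcyl}$(i)$ at $\underline{\beta}(w)$ combined with $\varepsilon(\underline{\beta}(w))\prec\varepsilon^{\ast}(\beta_{2})$, and then concludes $wv\in\Lambda_{n+m}(x)$ from the combinatorial criterion Lemma \ref{L:Cful}$(ii)$. You instead verify the definition of a full parameter cylinder head-on: you first upgrade $v\in\Xi_{m}(\underline{\beta}(w))$ to fullness at every $\beta\geq\underline{\beta}(w)$ --- a correct monotonicity observation resting on the same two ingredients (Lemma \ref{L:fulcyl}$(i)$ and prefix-monotonicity of $\beta\mapsto\varepsilon(\beta)$ via Proposition \ref{P:asteps}) that the paper combines once at the single base $\beta_{2}$ --- and then solve $f_{w}(\beta)=\ell(\beta)+y\beta^{-m}$ for each target $y\in[0,1)$ to get $J(wv)=[0,1)$ outright. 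Your route is more self-contained (it dispenses with Lemma \ref{L:Cful}$(ii)$) and isolates a reusable fact, namely that fullness of a word persists as the base increases; the paper's route is shorter given its imported machinery, needing only one algebraic root rather than a one-parameter family $\{\beta_{y}\}$. Your endpoint bookkeeping also checks out: $\psi_{y}(\underline{\beta}(w))\leq 0$ degenerates to equality only when $v=0^{m}$ and $y=0$, in which case $\underline{\beta}(w)$ itself witnesses the claim because $\varepsilon(x,\underline{\beta}(w))=w0^{\infty}$ by Lemma \ref{L:unovbeta}, and the strict positivity of the limits at $\overline{\beta}(w)^{-}$ follows from $\ell(\overline{\beta}(w))+\overline{\beta}(w)^{-m}\leq 1$, which your persistence claim legitimately supplies.
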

\begin{proof}
(1) We prove that if $v\in \Sigma_{m}(\underline{\beta}(w))$, then $wv\in \Omega_{n+m}(x)$. Recall that for any $w\in \Lambda_{n}(x)$, we have $\underline{\beta}(w)>1$. The definition of $\underline{\beta}(w)$ says that
\begin{equation*}
    x=\frac{w_{1}}{\underline{\beta}(w)}+\cdots+\frac{w_{n}}{(\underline{\beta}(w))^{n}}.
\end{equation*}
Let $\beta_{1}$ be the unique positive solution of the equation
\begin{equation}\label{E:x=wvbeta}
    x=\frac{w_{1}}{\beta}+\cdots+\frac{w_{n}}{\beta^{n}}+\frac{v_{1}}{\beta^{n+1}}+\cdots+\frac{v_{m}}{\beta^{n+m}}.
\end{equation}
Then it is clear that $\beta_{1}\geq \underline{\beta}(w)>1$. We shall show that $\varepsilon_{1}(x,\beta_{1})\cdots\varepsilon_{n+m}(x,\beta_{1})=wv$, and thus $wv\in \Omega_{n+m}(x)$.

Since $v\in \Sigma_{m}(\underline{\beta}(w))$ and $\beta_{1}\geq \underline{\beta}(w)$, one has $v\in \Sigma_{m}(\beta_{1})$ by Lemma \ref{L:number}.
Then, by item $(i)$ in Lemma \ref{L:admin}, the sequence $v0^{\infty}$ is the beta-expansion of some $y\in[0,1)$ in base $\beta_{1}$. Thus, by $(\ref{E:x=wvbeta})$, we have
\begin{equation*}
    x=\frac{w_{1}}{\beta_{1}}+\cdots+\frac{w_{n}}{\beta_{1}^{n}}+\frac{y}{\beta_{1}^{n}}<\frac{w_{1}}{\beta_{1}}
    +\cdots+\frac{w_{n}}{\beta_{1}^{n}}+\frac{1}{\beta_{1}^{n}}.
\end{equation*}
Hence, by item $(i)$ in Lemma \ref{L:Cful} on the definition of $\overline{\beta}(w)$ for $w\in \Lambda_{n}(x)$, we obtain $\beta_{1}<\overline{\beta}(w)$.

Combining the above bounds on $\beta_1$ together, i.e.
$$\underline{\beta}(w)\le \beta_{1}<\overline{\beta}(w),$$
 one has $\beta_{1}\in I(w)$. Therefore, $\varepsilon_{1}(x,\beta_{1})\cdots\varepsilon_{n}(x,\beta_{1})=w$.

Since $\beta_{1}\in I(w)$, by $(\ref{E:x=Tn})$ it follows that
\begin{equation*}
  T^{n}_{\beta_{1}}x=\beta_{1}^{n}\left(x-\sum^{n}_{i=1}\frac{w_{i}}{\beta_{1}^{i}}\right)=y.
\end{equation*}
This leads to that for all $1\leq i\leq m$,
\begin{equation*}
  \varepsilon_{n+i}(x,\beta_{1})=\lfloor \beta_{1} T^{n+i-1}_{\beta_{1}}x\rfloor=\lfloor \beta_{1} T^{i-1}_{\beta_{1}}(T^{n}_{\beta_{1}}x)\rfloor=\lfloor \beta_{1} T^{i-1}_{\beta_{1}}y\rfloor=\varepsilon_{i}(y,\beta_{1})=v_{i},
\end{equation*}
i.e., $\varepsilon_{n+1}(x,\beta_{1})\cdots\varepsilon_{n+m}(x,\beta_{1})=v$. Therefore, $\varepsilon_{1}(x,\beta_{1})\cdots\varepsilon_{n+m}(x,\beta_{1})=wv$.

(2) We prove that if $v\in \Xi_{m}(\underline{\beta}(w))$, then $wv_{1}\cdots v_{m-1}v^{+}_{m}\in \Omega_{n+m}(x)$. Once this is proven, we can conclude by item $(ii)$ in Lemma \ref{L:Cful} that $wv\in \Lambda_{n+m}(x)$, since $\underline{\beta}(wv)\geq \underline{\beta}(w)>1$.

Let $\beta_{2}$ be the unique positive solution of the equation
\begin{equation*}
    x=\frac{w_{1}}{\beta}+\cdots+\frac{w_{n}}{\beta^{n}}+\frac{v_{1}}{\beta^{n+1}}+\cdots+\frac{v_{m-1}}{\beta^{n+m-1}}+\frac{v^{+}_{m}}{\beta^{n+m}}.
\end{equation*}
It is clear that $\beta_{2}>\underline{\beta}(w)>1$. By Proposition \ref{P:asteps}, we have $\varepsilon(\underline{\beta}(w))\prec \varepsilon^{\ast}(\beta_{2})$. Hence, applying item $(i)$ in Lemma \ref{L:fulcyl} to $v$, it follows that for all $0\leq i\leq m-1$,
\begin{equation*}
    \sigma^{i}v_{1}\cdots v_{m-1}v^{+}_{m}\preceq \varepsilon_{1}(\underline{\beta}(w))\cdots \varepsilon_{m-i}(\underline{\beta}(w))\preceq \varepsilon^{\ast}_{1}(\beta_{2})\cdots \varepsilon^{\ast}_{m-i}(\beta_{2}),
\end{equation*}
which implies $v_{1}\cdots v_{m-1}v^{+}_{m}\in \Sigma_{m}(\beta_{2})$ by item $(i)$ in Lemma \ref{L:admin}. Through the same process as in the proof of part (1),
we conclude that
\begin{equation*}
    \varepsilon_{1}(x,\beta_{2})\cdots\varepsilon_{n+m}(x,\beta_{2})=wv_{1}\cdots v_{m-1}v^{+}_{m}.
\end{equation*}
Therefore, $wv_{1}\cdots v_{m-1}v^{+}_{m}\in \Omega_{n+m}(x)$.
\end{proof}

The following proposition provides a lower bound on the lengths of full cylinders in the parameter space $\{\beta\in \mathbb{R}\colon \beta>1\}$.
\begin{proposition}\label{P:Cfulen}
  For any $n\in \mathbb{N}$ and $w\in \Lambda_{n}(x)$, we have
\begin{equation*}
    |I(w)|\geq (\underline{\beta}(w)-1)^{2}(\overline{\beta}(w))^{-1-n}.
\end{equation*}
\end{proposition}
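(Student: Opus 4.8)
The plan is to read off the length of $I(w)$ directly from the two equations that pin down its endpoints, rather than going through the mean value theorem on $f_w$ (which, with crude derivative bounds, only yields a spurious factor of $n$). Since $w\in\Lambda_n(x)$ we have $\underline{\beta}(w)>1$, so Lemma \ref{L:I(w)} gives $I(w)=[\underline{\beta}(w),\overline{\beta}(w))$ and hence $|I(w)|=\overline{\beta}(w)-\underline{\beta}(w)$. Abbreviate $a=\underline{\beta}(w)$ and $b=\overline{\beta}(w)$. The definition of $a$ yields $x=\sum_{i=1}^{n}w_i a^{-i}$, while fullness together with item $(i)$ of Lemma \ref{L:Cful} yields $x=\sum_{i=1}^{n}w_i b^{-i}+b^{-n}$. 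Subtracting these two identities for the single quantity $x$ is the heart of the argument.

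After subtraction I obtain the clean identity
\begin{equation*}
\sum_{i=1}^{n}w_i\bigl(a^{-i}-b^{-i}\bigr)=b^{-n}.
\end{equation*}
I would then estimate each difference of powers by factoring: writing $a^{-i}-b^{-i}=(b^i-a^i)/(ab)^i$ and bounding $b^i-a^i=(b-a)\sum_{j=0}^{i-1}b^{j}a^{i-1-j}\le (b-a)\,i\,b^{i-1}$, I get $a^{-i}-b^{-i}\le (b-a)\,i\,a^{-i}b^{-1}$. Substituting into the identity gives
\begin{equation*}
b^{-n}\le (b-a)\,b^{-1}\sum_{i=1}^{n} i\,w_i\,a^{-i},
\qquad\text{i.e.}\qquad
b-a\ge \frac{b^{1-n}}{\sum_{i=1}^{n} i\,w_i\,a^{-i}}.
\end{equation*}

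It remains to bound the weighted sum from above, and this is where the only real input is needed: because $a=\underline{\beta}(w)>1$, item $(1)$ of Lemma \ref{L:unovbeta} tells us $\varepsilon(x,a)=w0^{\infty}$, so $w$ is genuinely the base-$a$ expansion of $x$ and its digits satisfy $w_i\le\lfloor a\rfloor\le a$. Hence $\sum_{i=1}^{n} i\,w_i\,a^{-i}\le a\sum_{i=1}^{\infty} i\,a^{-i}=a\cdot a/(a-1)^2=a^2/(a-1)^2$, using the closed form $\sum_{i\ge1}i z^{i}=z/(1-z)^2$ with $z=a^{-1}$. Plugging this in gives $b-a\ge b^{1-n}(a-1)^2/a^{2}$, and since $a<b$ implies $a^{-2}>b^{-2}$, this is at least $(a-1)^2 b^{-1-n}$, which is exactly the claimed bound. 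I expect the main obstacle to be precisely the presence of the factor $i$ in the weighted sum: a naive use of $x=\sum w_i a^{-i}\le 1$ loses this factor, so one genuinely needs both the digit bound $w_i\le a$ coming from Lemma \ref{L:unovbeta} and the explicit summation of the arithmetic–geometric series; the difference-of-powers estimate and the final comparison of $a^{-2}$ with $b^{-2}$ are then routine.
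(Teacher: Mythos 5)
Your proof is correct and takes essentially the same route as the paper: both read off $|I(w)|=\overline{\beta}(w)-\underline{\beta}(w)$ from Lemma \ref{L:I(w)}, subtract the two endpoint identities coming from the definition of $\underline{\beta}(w)$ and item $(i)$ of Lemma \ref{L:Cful}, and invoke the digit bound $w_i\le \lfloor\underline{\beta}(w)\rfloor$ via Lemma \ref{L:unovbeta}. The only (harmless) deviation is in estimating $\sum_{i=1}^{n} w_i\bigl(\underline{\beta}(w)^{-i}-\overline{\beta}(w)^{-i}\bigr)$: the paper compares it with the difference of two infinite geometric series, obtaining the intermediate bound $(\underline{\beta}(w)-1)(\overline{\beta}(w)-1)\,\underline{\beta}(w)^{-1}\overline{\beta}(w)^{-n}$ and thereby avoiding your factor $i$ altogether, whereas you factor each difference of powers and sum an arithmetic--geometric series to get $(\underline{\beta}(w)-1)^{2}\,\underline{\beta}(w)^{-2}\,\overline{\beta}(w)^{1-n}$; both dominate the claimed bound.
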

\begin{proof}
  Recall that for any $w\in\Lambda_{n}(x)$, we have $\underline{\beta}(w)>1$. By Lemma \ref{L:unovbeta}, it follows that $\varepsilon(x,\underline{\beta}(w))=w0^{\infty}$. Then, by the definition of beta-expansion, we know that $w_{i}\in \{0,1,\cdots,\left\lfloor\underline{\beta}(w)\right\rfloor\}$ for all $1\leq i\leq n$. Recall the definition of $\underline{\beta}(w)$:
\begin{equation*}\label{E:x=unbeta}
    x=\frac{w_{1}}{\underline{\beta}(w)}+\cdots+\frac{w_{n}}{(\underline{\beta}(w))^{n}}
=\sum^{n}_{i=1}\frac{w_{i}}{(\underline{\beta}(w))^{i}}.
\end{equation*}
On the other hand, by Lemma \ref{L:Cful}, we have
 \begin{equation*}\label{E:x=ovbeta}
    x=\frac{w_{1}}{\overline{\beta}(w)}+\cdots
+\frac{w_{n}}{(\overline{\beta}(w))^{n}}+\frac{1}{(\overline{\beta}(w))^{n}}
=\sum^{n}_{i=1}\frac{w_{i}}{(\overline{\beta}(w))^{i}}+\frac{1}{(\overline{\beta}(w))^{n}}.
  \end{equation*}
Thus, it follows that
\begin{align*}
    \frac{1}{(\overline{\beta}(w))^{n}}&=\sum^{n}_{i=1}\frac{w_{i}}{(\underline{\beta}(w))^{i}}-\sum^{n}_{i=1}\frac{w_{i}}{(\overline{\beta}(w))^{i}}
\leq\underline{\beta}(w)\left(\sum_{i=1}^{\infty}\frac{1}{(\underline{\beta}(w))^{i}}-\sum_{i=1}^{\infty}\frac{1}{(\overline{\beta}(w))^{i}}\right)\\
&=\frac{\underline{\beta}(w)(\overline{\beta}(w)-\underline{\beta}(w))}{(\underline{\beta}(w)-1)(\overline{\beta}(w)-1)}.
\end{align*}
Therefore,
\begin{equation*}
    |I(w)|=\overline{\beta}(w)-\underline{\beta}(w)\geq\frac{(\underline{\beta}(w)-1)(\overline{\beta}(w)-1)}{\underline{\beta}(w)(\overline{\beta}(w))^{n}}\geq (\underline{\beta}(w)-1)^{2}(\overline{\beta}(w))^{-1-n}.
\end{equation*}
\end{proof}

The following lemma will be used several times in the sequel.
\begin{lemma}\label{L:0k1}
 Let $k\geq 0$ and $w\in\Omega_{n}(x)$ with $n\ge 1$. If $w0^{k}1\notin \Omega_{n+k+1}(x)$, then $I(w)=I(w0^{k+1})$, i.e., $$\varepsilon_{1}(x,\beta)\cdots \varepsilon_{n+k+1}(x,\beta)=w0^{k+1} \  {\text{for all}}\  \beta\in I(w).$$
\end{lemma}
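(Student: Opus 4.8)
The plan is to argue by contradiction, reducing the claim to a single application of the intermediate value theorem to one monotone function on the cylinder $I(w)$. Since every $\beta\in I(w)$ satisfies $\varepsilon_{1}(x,\beta)\cdots\varepsilon_{n}(x,\beta)=w$, formula $(\ref{E:x=Tn})$ gives $T^{n}_{\beta}x=f_{w}(\beta)$ there, and the function I would work with is
$$
h(\beta):=\beta^{k+1}f_{w}(\beta)=\beta^{\,n+k+1}\Big(x-\sum_{i=1}^{n}\frac{w_{i}}{\beta^{i}}\Big),\qquad \beta\in I(w).
$$
By $(\ref{E:f'(beta)})$ the factor $f_{w}$ is non-negative and strictly increasing, and $\beta^{k+1}$ is positive and strictly increasing, so $h$ is continuous and strictly increasing on $I(w)$. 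The key dictionary is that for $\beta\in I(w)$ the block $\varepsilon_{n+1}(x,\beta)\cdots\varepsilon_{n+k+1}(x,\beta)$ equals $0^{k+1}$ exactly when $T^{n}_{\beta}x<\beta^{-(k+1)}$, i.e. when $h(\beta)<1$. Consequently the assertion $I(w)=I(w0^{k+1})$ is equivalent to $h(\beta)<1$ for every $\beta\in I(w)$.

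I would then assume, toward a contradiction, that $h(\beta_{0})\ge 1$ for some $\beta_{0}\in I(w)$, and locate the exact level $h=1$. This forces me to understand the behaviour of $h$ at the left end of $I(w)$, which I expect to be the only delicate step, because both the openness of $I(w)$ and the value of $\inf J(w)$ genuinely depend on $\underline{\beta}(w)$. I would split according to the description of $J(w)$ and Lemma \ref{L:I(w)}: if $\underline{\beta}(w)>1$ then $\underline{\beta}(w)\in I(w)$ and $f_{w}(\underline{\beta}(w))=0$, so $h(\underline{\beta}(w))=0$; if $\underline{\beta}(w)=1$ then $I(w)$ is open at $1$, and either $w=0^{n}$ with $x\in(0,1)$, where $h(\beta)=\beta^{\,n+k+1}x\to x<1$, or $w=10^{n-1}$ with $x=1$, where $h(\beta)\to 0$, as $\beta\to 1^{+}$. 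In every case the infimum of $h$ near the left end is strictly below $1$, while $h(\beta_{0})\ge 1$; by continuity the intermediate value theorem produces $\beta^{\star}\in I(w)$ with $h(\beta^{\star})=1$. Moreover $\beta^{\star}$ lies strictly above the left end (since there $h<1$) and satisfies $\beta^{\star}\le\beta_{0}<\overline{\beta}(w)$, so $\beta^{\star}$ is genuinely interior to $I(w)$.

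The last step is to read off the forbidden prefix at $\beta^{\star}$. From $h(\beta^{\star})=1$ I obtain $T^{n}_{\beta^{\star}}x=f_{w}(\beta^{\star})=(\beta^{\star})^{-(k+1)}$, and a direct computation of the base-$\beta^{\star}$ expansion of $(\beta^{\star})^{-(k+1)}$ shows its first $k+1$ digits are $0^{k}1$: the first $k$ iterates stay below $1$, and the $(k+1)$-th iterate equals $1$, so its floor is $1$. Since the first $n$ digits of $x$ at $\beta^{\star}$ are $w$, this gives $\varepsilon_{1}(x,\beta^{\star})\cdots\varepsilon_{n+k+1}(x,\beta^{\star})=w0^{k}1$, that is $w0^{k}1\in\Omega_{n+k+1}(x)$, contradicting the hypothesis. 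Hence $h(\beta)<1$ for all $\beta\in I(w)$, which is precisely $I(w)=I(w0^{k+1})$. I do not expect any obstacle in the final two manipulations; the whole weight of the argument sits in the left-endpoint case analysis needed to guarantee that the level $h=1$ is actually attained inside $I(w)$.
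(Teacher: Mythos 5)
Your proposal is correct, and it takes a genuinely different route from the paper's. The paper argues combinatorially: it first shows $\underline{\beta}(w)>1$ by an explicit admissibility check (via Lemma \ref{L:admin}) in the degenerate cases $w=0^{n}$ and $w=10^{n-1}$, then takes a hypothetical $\beta\in I(w)$ whose block $\varepsilon_{n+1}(x,\beta)\cdots\varepsilon_{n+k+1}(x,\beta)$ is not $0^{k+1}$, locates the first nonzero digit at position $n+i_{0}$, and manufactures $w0^{k}1\in\Omega_{n+k+1}(x)$ from it --- using item $(ii)$ of Lemma \ref{L:Cful} when $i_{0}=k+1$, and the full-cylinder concatenation Lemma \ref{L:CcC} (applied to $w0^{i_{0}}\in\Lambda_{n+i_{0}}(x)$ and $0^{k-i_{0}}1\in\Sigma_{k-i_{0}+1}$) when $1\leq i_{0}\leq k$. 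You instead encode the conclusion as $h(\beta)=\beta^{k+1}f_{w}(\beta)<1$ on $I(w)$ --- your dictionary ``digits $=0^{k+1}$ iff $T^{n}_{\beta}x<\beta^{-(k+1)}$'' is correct, since the order-$(k+1)$ cylinder $\mathfrak{I}(0^{k+1})$ is exactly $[0,\beta^{-(k+1)})$ --- and run the intermediate value theorem entirely inside $I(w)$, needing Lemma \ref{L:I(w)} and $(\ref{E:beta=1or})$ only for the left-endpoint analysis (where your three subcases, $h(\underline{\beta}(w))=0$, $h\to x<1$, and $h\to 0$, are all right). The decisive advantage of your route is that $\beta^{\star}$ is produced \emph{inside} $I(w)$, so the prefix $w$ of $\varepsilon(x,\beta^{\star})$ comes for free and no admissibility verification is ever required; this bypasses Lemmas \ref{L:Cful} and \ref{L:CcC} completely, and it also absorbs the paper's separate preliminary step $\underline{\beta}(w)>1$ into the same limit computation. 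What the paper's route buys is reuse of machinery it has to develop anyway and no analytic bookkeeping; yours is more elementary and constructive, exhibiting the exact base with $\varepsilon(x,\beta^{\star})=w0^{k}10^{\infty}$ that witnesses $w0^{k}1\in\Omega_{n+k+1}(x)$. Two small points you should make explicit when writing it up: strict monotonicity of $h$ uses $f_{w}\geq 0$ on $I(w)$ (which comes from $f_{w}=T^{n}_{\beta}x$, not from $(\ref{E:f'(beta)})$ alone), and in the terminal digit computation for $y=(\beta^{\star})^{-(k+1)}$ one checks $\lfloor(\beta^{\star})^{j-k}\rfloor=0$ for $1\leq j\leq k$ and $\lfloor(\beta^{\star})^{0}\rfloor=1$, giving the block $0^{k}1$ followed by $T^{k+1}_{\beta^{\star}}y=0$; both verifications are immediate.
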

\begin{proof}
We first prove that if there exists some $k\geq 0$ such that $w0^{k}1\notin \Omega_{n+k+1}(x)$, then we must have $\underline{\beta}(w)>1$.
Assume on the contrary that $\underline{\beta}(w)=1$. Then by $(\ref{E:beta=1or})$, one knows
\begin{align*}
w0^k1=\left\{
        \begin{array}{ll}
          0^n0^k1, & \hbox{when $x\in (0,1)$;} \\
          10^{n-1}0^k1, & \hbox{when $x=1$.}
        \end{array}
      \right.
\end{align*}Let $\beta$ be the unique positive solution of the equation
\begin{equation*}
    x=\frac{w_{1}}{\beta}+\cdots+\frac{w_{n}}{\beta^{n}}+\frac{1}{\beta^{n+k+1}}.
\end{equation*}
It is clear that $\beta>1$.
By item $(i)$ for $x\in (0,1)$ and item $(ii)$ for $x=1$ in Lemma \ref{L:admin}, it is direct to check that the sequence $w0^{k}10^{\infty}$ is the beta-expansion of $x$ in base $\beta$, and thus $w0^{k}1\in \Omega_{n+k+1}(x)$. This contradicts that $w0^{k}1\notin \Omega_{n+k+1}(x)$. Therefore, we have $\underline{\beta}(w)>1$.

  Assume that there exists some $\beta\in I(w)$ such that
  $$\varepsilon_{1}(x,\beta)\cdots \varepsilon_{n+k+1}(x,\beta)\neq w0^{k+1},  \ {\text{i.e.,}}\ \ \varepsilon_{n+1}(x,\beta)\cdots \varepsilon_{n+k+1}(x,\beta)\neq 0^{k+1}.$$ Let $\varepsilon_{n+i_{0}}(x,\beta)$ be the first nonzero digit in $\varepsilon_{n+1}(x,\beta)\cdots \varepsilon_{n+k+1}(x,\beta)$.

  If $i_{0}=k+1$, then $\varepsilon_{n+i_{0}}(x,\beta)\ge 1$. So by item $(ii)$ in Lemma \ref{L:Cful}, we have $w0^{k}1\in \Omega_{n+k+1}(x)$, which contradicts with the assumption that $w0^{k}1\notin \Omega_{n+k+1}(x)$.

  If $1\leq i_{0}\leq k$, still by item $(ii)$ in Lemma \ref{L:Cful}, it follows that $w0^{i_{0}}\in \Lambda_{n+i_{0}}(x)$, since $\underline{\beta}(w0^{i_{0}})\geq\underline{\beta}(w)>1$.
  At the same time,
$0^{k-i_{0}}1\in \Sigma_{k-i_{0}+1}(\beta)$ for any $\beta>1$. Thus by Lemma \ref{L:CcC}, we have $w0^{k}1\in \Omega_{n+k+1}(x)$, which also contradicts that  $w0^{k}1\notin \Omega_{n+k+1}(x)$. Therefore, we get $I(w)=I(w0^{k+1})$.
\end{proof}

\section{Convergent Part of Theorem \ref{T:main}}
In this section, we prove the convergent part of Theorem \ref{T:main}, i.e. to show $$\mathcal{L}(E_{x}(\{x_{n}\}, \varphi))=0, \ {\text{when}}\ \sum_{n=1}^{\infty}\varphi(n)<+\infty.$$
As usual, we will use the convergence part of the Borel-Cantelli lemma to conclude this. However, as will be seen, the estimation is far from being trivial.

For any $n\in\mathbb{N}$, let $\textbf{E}_{n}(x)=\{\beta>1\colon |T^{n}_{\beta}x-x_{n}|<\varphi(n)\}$. Then
\begin{equation}\label{E:ExE}
    E_{x}(\{x_{n}\},\varphi)=\bigcap^{\infty}_{m=1}\bigcup^{\infty}_{n=m}\textbf{E}_{n}(x).
\end{equation}
To estimate the measure of $\textbf{E}_{n}(x)$, we divide the parameter space $(1,\infty)$ into cylinders. At first, we define a sequence $\{\beta_N\}_{N\ge 1}$ decreasing to 1.

Let $\delta_{x}=1$ if $x=1$, otherwise let $\delta_{x}=0$. For any $N\in \mathbb{N}$, let $\beta_{N}$ be the unique positive solution of the equation
\begin{equation*}
    x=\frac{\delta_{x}}{\beta}+\frac{1}{\beta^{N+2}}.
\end{equation*}
Then it is clear that $\beta_{N}>1$ and $\beta_{N}\searrow 1$ as $N\rightarrow\infty$. Since $\varepsilon^{\ast}_{1}(\beta_{N})\geq 1$, by item $(i)$ for $x\in(0,1)$ and item $(ii)$ for $x=1$ in Lemma \ref{L:admin}, it is direct to check that $\varepsilon(x,\beta_{N})=\delta_{x}0^{N}10^{\infty}$.

Let $a_{N}$ be a positive integer large enough such that $a_{N}\geq N+2$ and for all $n\geq a_{N}$, we have
\begin{equation}\label{E:N0}
    x\beta^{n}_{N}\geq 2n^{2}>\max\left\{\beta_{N},\frac{1}{\beta_{N}-1}\right\}.
\end{equation}
Let
\begin{equation*}
    \mathbb{U}_{a_{N}}=\{u\in \Omega_{a_{N}}(x)\colon u\succeq \delta_{x}0^{N}10^{a_{N}-N-2}\}.
\end{equation*}
Then we have $I(u)\subset [\beta_{N},+\infty)$ for all $u\in \mathbb{U}_{a_{N}}$ and
\begin{equation*}
    (1,+\infty)=\bigcup^{\infty}_{N=1}[\beta_{N},+\infty)=\bigcup^{\infty}_{N=1}\bigcup_{u\in \mathbb{U}_{a_{N}}}I(u).
\end{equation*}
Therefore, by $(\ref{E:ExE})$, it follows that
\begin{align}\label{E:(1,+)}
    E_{x}(\{x_{n}\},\varphi)&=E_{x}(\{x_{n}\},\varphi)\cap (1,+\infty)=\bigcup^{\infty}_{N=1}\bigcup_{u\in \mathbb{U}_{a_{N}}}\Big(E_{x}(\{x_{n}\},\varphi)\cap I(u)\Big)\\\nonumber
    &=\bigcup^{\infty}_{N=1}\bigcup_{u\in \mathbb{U}_{a_{N}}}\left(\bigcap^{\infty}_{m=1}\bigcup^{\infty}_{n=m}\left(\textbf{E}_{n}(x)\cap I(u)\right)\right).
\end{align}

So, to show $\mathcal{L}(E_{x}(\{x_{n}\}, \varphi))=0$, it suffices to show that for every $N\in \mathbb{N}$ and $u\in \mathbb{U}_{a_{N}}$, the limsup set $E_{x}(\{x_{n}\},\varphi)\cap I(u)$ is of Lebesgue measure $0$. This is done by applying the convergence part of the Borel-Cantelli lemma. So, the next
task is to estimate the measure of the set $\textbf{E}_{n}(x)\cap I(u)$ when $n\in\mathbb{N}$ is large (see Lemma \ref{L:ML1}).

From now on to the end of this section, we fix $N\in \mathbb{N}$ and $u\in \mathbb{U}_{a_{N}}$.
Before the estimation, let's give some words on the strategy:
\begin{itemize}\item

A natural attempt on the measure of $\textbf{E}_{n}(x)\cap I(u)$ is to decompose the set into the following one: $$
\textbf{E}_{n}(x)\cap I(u)=\bigcup_{v\in \Omega_n(x), v|_{a_N}=u}\textbf{E}_{n}(x)\cap I(v).
$$
As far as all $v\in \Omega_n(x)$ are concerned, one only has that (Lemma \ref{L:length} below) $$|\textbf{E}_{n}(x)\cap I(v)|\leq 2x^{-1}\varphi(n)(\underline{\beta}(v))^{1-n}.$$
We cannot get any relation between the right quantity with $|I(v)|$ nor what we will get when sum them over all $v$ since $\underline{\beta}(v)$ differs greatly.

\item So, on one hand, we need distinguish the {\em good} words and {\em bad} words to relate the measure of $\textbf{E}_{n}(x)\cap I(v)$ with that of $I(v)$; on the other hand, we divide $\Omega_n(x)$ into collections with long common prefix to ensure that $\underline{\beta}(v)$ differs not so much inside each collection. More precisely, let $a_{N}\leq q<n$ be some integer and decompose $$
\textbf{E}_{n}(x)\cap I(u)=\bigcup_{i=1}^2\bigcup_{w\in \Omega^{(i)}_q(x), w|_{a_N}=u}\ \ \bigcup_{v\in \Omega_n(x), v|_q=w}\textbf{E}_{n}(x)\cap I(v),
$$ where $\Omega^{(1)}_q(x)$ for {\em good} words and $\Omega^{(2)}_q(x)$ for {\em bad} words (see the notations ${\mathbb{P}}_{q}(u)$ and $\widehat{\mathbb{P}}_{q}(u)$ below).
\end{itemize}
\begin{lemma}\label{L:length}
  For any $n\in \mathbb{N}$ and $v\in \Omega_{n}(x)$, the set
  \begin{equation*}
    I(v;\varphi):=\textbf{\emph{E}}_{n}(x)\cap I(v)=\{\beta\in I(v)\colon |T^{n}_{\beta}x-x_{n}|<\varphi(n)\}
  \end{equation*}
 is an interval of length $|I(v;\varphi)|\leq 2x^{-1}\varphi(n)(\underline{\beta}(v))^{1-n}$.
\end{lemma}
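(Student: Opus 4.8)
The plan is to exploit that, on the cylinder $I(v)$, the map $\beta\mapsto T^n_\beta x$ coincides with the function $f_v$ introduced in \eqref{f2}, which by \eqref{E:f'(beta)} is continuous and strictly increasing with the uniform derivative bound $f'_v(\beta)\ge x\beta^{n-1}$, while $I(v)$ itself is an interval by Lemma \ref{L:I(w)}. Two things then need to be checked: that $I(v;\varphi)$ is an interval, and the stated upper bound on its length.

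For the first claim, I would rewrite
$$I(v;\varphi)=\{\beta\in I(v)\colon x_{n}-\varphi(n)<f_{v}(\beta)<x_{n}+\varphi(n)\}=I(v)\cap f_{v}^{-1}\big((x_{n}-\varphi(n),\,x_{n}+\varphi(n))\big).$$
Since $f_{v}$ is continuous and strictly monotone on the interval $I(v)$, the preimage of the open interval is connected, and its intersection with the interval $I(v)$ is again an interval (possibly empty or degenerate).

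For the length bound, I would denote the endpoints of $I(v;\varphi)$ by $\beta_{a}\le\beta_{b}$. Because $f_{v}$ is strictly increasing, it maps $I(v;\varphi)$ into the open interval $(x_{n}-\varphi(n),\,x_{n}+\varphi(n))$, whose length is $2\varphi(n)$; hence $f_{v}(\beta_{b})-f_{v}(\beta_{a})\le 2\varphi(n)$. By the mean value theorem there is $\xi\in(\beta_{a},\beta_{b})\subset I(v)$ with $f_{v}(\beta_{b})-f_{v}(\beta_{a})=f'_{v}(\xi)(\beta_{b}-\beta_{a})$. Every $\beta\in I(v)$ satisfies $\beta\ge\underline{\beta}(v)$, so the derivative bound yields $f'_{v}(\xi)\ge x\xi^{n-1}\ge x(\underline{\beta}(v))^{n-1}$. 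Combining these,
$$2\varphi(n)\ge f'_{v}(\xi)(\beta_{b}-\beta_{a})\ge x(\underline{\beta}(v))^{n-1}\,|I(v;\varphi)|,$$
which rearranges exactly to $|I(v;\varphi)|\le 2x^{-1}\varphi(n)(\underline{\beta}(v))^{1-n}$.

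There is essentially no serious obstacle here: the estimate is a direct consequence of the uniform lower bound $f'_{v}\ge x\beta^{n-1}\ge x(\underline{\beta}(v))^{n-1}$ valid throughout $I(v)$. The only points requiring a little care are to justify that $I(v;\varphi)$ is genuinely connected (invoking continuity together with strict monotonicity, not merely monotonicity), and to note that the length of $I(v;\varphi)$ is unaffected by whether its endpoints are attained, so that passing to the closed interval $[\beta_{a},\beta_{b}]$ in order to apply the mean value theorem is harmless.
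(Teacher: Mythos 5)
Your proof is correct and follows essentially the same route as the paper: both express $I(v;\varphi)$ as $I(v)\cap f_v^{-1}\bigl((x_n-\varphi(n),\,x_n+\varphi(n))\bigr)$ to get connectedness, and both derive the length bound from the uniform derivative estimate $f'_v(\beta)\ge x\beta^{n-1}\ge x(\underline{\beta}(v))^{n-1}$ of \eqref{E:f'(beta)} (the paper states the resulting inequality $2\varphi(n)\ge x(\underline{\beta}(v))^{n-1}|\beta_{\ast}-\beta_{\ast\ast}|$ for arbitrary points of $I(v;\varphi)$, while you make the mean value theorem step explicit). Your added remarks about strict monotonicity and about endpoints not being attained are harmless refinements, not a departure from the paper's argument.
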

\begin{proof}
Recall the definition of $f_v$ in (\ref{f2}) and $f_v(\beta)=T^n_{\beta}(x)$ for $\beta\in I(v)$. Then it follows that $$
  I(v;\varphi)=I(v)\cap f_v^{-1}\Big(x_n-\varphi(n), x_n+\varphi(n)\Big).
  $$ Since $f_v$ is continuous and increasing on $I(v)$ and $I(v)$ is an interval, so is $I(v;\varphi)$.

  Let $\beta_{\ast}, \beta_{\ast\ast}\in I(v;\varphi)$. By (\ref{E:f'(beta)}),
  $$
  2\varphi(n)\ge |f_v(\beta_{\ast})-f_v(\beta_{\ast\ast})|\ge x\underline{\beta}(v)^{n-1}\cdot |\beta_{\ast}-\beta_{\ast\ast}|.
  $$
   Then the length of $I(v;\varphi)$ follows.
\end{proof}

Note that for any $q\geq a_{N}$ and $w\in \Omega_{q}(x)$ with $w|_{a_{N}}=u$, we have $I(w)\subset I(u)\subset [\beta_{N},+\infty)$, so
\begin{equation}\label{E:lwlurwru}
    1<\beta_{N}\leq \underline{\beta}(u)\leq \underline{\beta}(w)<\overline{\beta}(w)\leq \overline{\beta}(u).
\end{equation}

The following proposition says that all members in $I(w)$ with $w\in \Omega_{q}(x)$ do not differ so much for large $q$.
\begin{proposition}\label{P:ov/un<3}
  For any $q\geq a_{N}$ and $w\in \Omega_{q}(x)$ with $w|_{a_{N}}=u$, we have
 \begin{equation*}
    \frac{\overline{\beta}(w)}{(\underline{\beta}(w))^{2}}<1\quad\text{and}\quad\left(\frac{\overline{\beta}(w)}{\underline{\beta}(w)}\right)^{q+1}<3.
  \end{equation*}
\end{proposition}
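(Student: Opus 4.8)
The plan is to extract both inequalities from a single quantitative input: the upper bound on the length of a cylinder of order $q$ provided by Lemma \ref{L:I(w)}, namely
$$
\overline{\beta}(w)-\underline{\beta}(w)=|I(w)|\leq x^{-1}(\overline{\beta}(w))^{1-q},
$$
fed through the threshold $(\ref{E:N0})$ imposed on $a_{N}$. Throughout I abbreviate $\underline{\beta}=\underline{\beta}(w)$ and $\overline{\beta}=\overline{\beta}(w)$. Since $w|_{a_{N}}=u$ and $q\geq a_{N}$, the chain $(\ref{E:lwlurwru})$ gives $\underline{\beta}\geq \beta_{N}>1$, so $(\ref{E:N0})$ applied with $n=q$ yields the two facts I will use repeatedly: $x\beta_{N}^{q}\geq 2q^{2}$ and $\beta_{N}-1>\frac{1}{2q^{2}}$ (the latter because $2q^{2}>\tfrac{1}{\beta_{N}-1}$). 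Because $\overline{\beta}>\underline{\beta}$, the length bound can be weakened to $\overline{\beta}-\underline{\beta}<\frac{1}{x\underline{\beta}^{q-1}}$, which is the form convenient for both estimates.

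For the first inequality $\overline{\beta}<\underline{\beta}^{2}$, I would show that the gap $\overline{\beta}-\underline{\beta}$ is dominated by $\underline{\beta}^{2}-\underline{\beta}=\underline{\beta}(\underline{\beta}-1)$. Indeed it suffices to check $\frac{1}{x\underline{\beta}^{q-1}}<\underline{\beta}(\underline{\beta}-1)$, equivalently $x\underline{\beta}^{q}(\underline{\beta}-1)>1$; and using $\underline{\beta}\geq\beta_{N}$ together with the two facts above one gets $x\underline{\beta}^{q}(\underline{\beta}-1)\geq x\beta_{N}^{q}(\beta_{N}-1)>2q^{2}\cdot\frac{1}{2q^{2}}=1$. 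Adding $\underline{\beta}$ then gives $\overline{\beta}<\underline{\beta}+\underline{\beta}(\underline{\beta}-1)=\underline{\beta}^{2}$, which is the first assertion.

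For the second inequality, I would estimate the ratio additively: $\frac{\overline{\beta}}{\underline{\beta}}=1+\frac{\overline{\beta}-\underline{\beta}}{\underline{\beta}}<1+\frac{1}{x\underline{\beta}^{q}}\leq 1+\frac{1}{2q^{2}}$, the last step again from $x\underline{\beta}^{q}\geq x\beta_{N}^{q}\geq 2q^{2}$. Raising to the power $q+1$ and using $1+t\leq e^{t}$, I obtain
$$
\left(\frac{\overline{\beta}}{\underline{\beta}}\right)^{q+1}<\left(1+\frac{1}{2q^{2}}\right)^{q+1}\leq\exp\!\left(\frac{q+1}{2q^{2}}\right)\leq e<3,
$$
where $\frac{q+1}{2q^{2}}\leq\frac{1}{q}\leq 1$ for $q\geq 1$.

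I do not expect a genuine obstacle here: the whole point is that the definition of $a_{N}$ in $(\ref{E:N0})$ was engineered precisely so that a crude length bound beats the relevant thresholds. The only care needed is to consistently replace $\overline{\beta}$ by the smaller $\underline{\beta}$ in the denominators — legitimate because this only enlarges the bound — and to track which inequalities are strict, so that the final comparisons with $1$ and $3$ come out strict as stated.
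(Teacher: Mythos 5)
Your proof is correct and follows essentially the same route as the paper: both arguments combine the cylinder-length bound $|I(w)|\leq x^{-1}(\overline{\beta}(w))^{1-q}$ from Lemma \ref{L:I(w)} with the monotonicity chain $(\ref{E:lwlurwru})$ and the defining threshold $(\ref{E:N0})$ for $a_{N}$, reducing the first inequality to $x\beta_{N}^{q}(\beta_{N}-1)>1$ and the second to $1+t\leq e^{t}$ with exponent at most $1$. Your presentation of the first inequality as the gap comparison $\overline{\beta}(w)-\underline{\beta}(w)<\underline{\beta}(w)(\underline{\beta}(w)-1)$ is just an algebraic rearrangement of the paper's computation of $\overline{\beta}(w)/(\underline{\beta}(w))^{2}$, not a different method.
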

\begin{proof}
By Lemma \ref{L:I(w)}, the inequality (\ref{E:lwlurwru}) and at last the choice of $a_N$ (see (\ref{E:N0})), we have
\begin{align*}
    \frac{\overline{\beta}(w)}{(\underline{\beta}(w))^{2}}&=\frac{\underline{\beta}(w)+|I(w)|}{(\underline{\beta}(w))^{2}}\leq \frac{\underline{\beta}(w)+x^{-1}(\overline{\beta}(w))^{1-q}}{(\underline{\beta}(w))^{2}}\\
&\leq \frac{1}{\underline{\beta}(w)}+\frac{1}{x(\underline{\beta}(w))^{q+1}}\leq \frac{1}{\beta_{N}}+\frac{1}{x\beta_{N}^{q+1}}<\frac{1}{\beta_{N}}+\frac{\beta_{N}-1}{\beta_{N}}=1
\end{align*}
and
\begin{equation*}
    \left(\frac{\overline{\beta}(w)}{\underline{\beta}(w)}\right)^{q+1}\leq \left(1+\frac{1}{x(\underline{\beta}(w))^{q}}\right)^{q+1}
\leq \left(1+\frac{1}{x\beta_{N}^{q}}\right)^{q+1}\leq e^{\frac{q+1}{x\beta^{q}_{N}}}\leq e<3.
\end{equation*}
\end{proof}

Now we divide the words with prefix $u$ into two families: {\em good} and {\em bad}. Since the same situation will also appear in the divergence case, we define the good and bad families for a general word $\tau$. But in this section take $\tau=u$ is sufficient.

Let $m\geq a_{N}$ and $\tau\in \Omega_{m}(x)$ with $\tau|_{a_{N}}=u$. For any $q\geq |\tau|=m$, define
\begin{equation*}
    \mathbb{P}_{q}(\tau)=\{w\in \Omega_{q}(x)\colon w|_{m}=\tau \text{ and } w0^{q-1}1\in \Omega_{2q}(x)\}
\end{equation*}
and
\begin{equation*}
    \widehat{\mathbb{P}}_{q}(\tau)=\{w\in \Omega_{q}(x)\colon w|_{m}=\tau \text{ and } w0^{q-1}1\notin \Omega_{2q}(x)\}.
\end{equation*}
We call $I(w)$ a bad subinterval of order $q$ of $I(\tau)$ if $w\in \widehat{\mathbb{P}}_{q}(\tau)$.

 For any $m\geq a_{N}$ and $\tau\in \Omega_{m}(x)$ with $\tau|_{a_{N}}=u$, by Proposition \ref{P:ov/un<3}, we have $(\underline{\beta}(\tau))^{2}/\overline{\beta}(\tau)>1$. Thus we can choose a sufficiently large integer $b_{\tau}$  such that $b_{\tau}\geq |\tau|=m$ and for all $q\geq b_{\tau}$, we have
\begin{equation}\label{E:N1}
    \frac{1}{q^{2}}\cdot\left(\frac{(\underline{\beta}(\tau))^{2}}{\overline{\beta}(\tau)}\right)^{q}\geq \frac{\overline{\beta}(\tau)\underline{\beta}(\tau)}{x|I(\tau)|\left(\overline{\beta}(\tau)-1\right)}.
\end{equation}

The following proposition indicates that when $q\in \mathbb{N}$ is large, the total length of all bad subintervals of order $q$ of $I(\tau)$ is very small.
\begin{proposition}\label{P:1/n2}
  Let $m\geq a_{N}$ and $\tau\in \Omega_{m}(x)$ with $\tau|_{a_{N}}=u$. For any $q\geq b_{\tau}$, we have
\begin{equation*}
    \sum_{w\in \widehat{\mathbb{P}}_{q}(\tau)}|I(w)|\leq \frac{1}{q^{2}}|I(\tau)|.
\end{equation*}
\end{proposition}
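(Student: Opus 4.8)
The plan is to show that each individual bad subinterval is extraordinarily short---of length comparable to an order-$2q$ cylinder rather than an order-$q$ one---and that there cannot be too many of them, so that their total length is dominated by the quantity built into \eqref{E:N1}.

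First I would exploit the defining property of a bad word. Fix $w\in\widehat{\mathbb{P}}_{q}(\tau)$, so $w\in\Omega_{q}(x)$ with $w|_{m}=\tau$ and $w0^{q-1}1\notin\Omega_{2q}(x)$. This is exactly the hypothesis of Lemma \ref{L:0k1} with $n=q$ and $k=q-1$, so it forces $I(w)=I(w0^{q})$: the next $q$ digits are identically zero on all of $I(w)$, and $w0^{q}$ is a genuine word of order $2q$. Recalling from \eqref{E:lwlurwru} that $\underline{\beta}(\tau)\ge\beta_{N}>1$ and hence $\overline{\beta}(w0^{q})>\underline{\beta}(w)\ge\underline{\beta}(\tau)>1$, the length estimate of Lemma \ref{L:I(w)} applied to the order-$2q$ word $w0^{q}$ gives, since $1-2q<0$,
\[
|I(w)|=|I(w0^{q})|\le x^{-1}\bigl(\overline{\beta}(w0^{q})\bigr)^{1-2q}\le x^{-1}\bigl(\underline{\beta}(\tau)\bigr)^{1-2q}.
\]
This uniform bound over all bad words is the crux: forcing $q$ extra zeros costs a factor $(\underline{\beta}(\tau))^{-q}$ relative to a generic order-$q$ cylinder.

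Next I would bound the number of bad words by the total number of order-$q$ words extending $\tau$. For any $\beta\in I(\tau)$ the suffix $\varepsilon_{m+1}(x,\beta)\cdots\varepsilon_{q}(x,\beta)$ is precisely the string of the first $q-m$ digits of $T^{m}_{\beta}x$ in base $\beta$, hence lies in $\Sigma_{q-m}(\beta)\subseteq\Sigma_{q-m}(\overline{\beta}(\tau))$ by the monotonicity in Lemma \ref{L:number}$(i)$ (as $\beta<\overline{\beta}(\tau)$). Distinct extensions of $\tau$ produce distinct suffixes, so Lemma \ref{L:number}$(ii)$ yields
\[
\#\widehat{\mathbb{P}}_{q}(\tau)\le\#\Sigma_{q-m}(\overline{\beta}(\tau))\le\frac{(\overline{\beta}(\tau))^{q+1}}{\overline{\beta}(\tau)-1}.
\]

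Finally I would combine the two estimates and match them against \eqref{E:N1}. Multiplying the count by the uniform length bound gives
\[
\sum_{w\in\widehat{\mathbb{P}}_{q}(\tau)}|I(w)|\le\frac{(\overline{\beta}(\tau))^{q+1}}{\overline{\beta}(\tau)-1}\,x^{-1}\bigl(\underline{\beta}(\tau)\bigr)^{1-2q}=\frac{\overline{\beta}(\tau)\underline{\beta}(\tau)}{x(\overline{\beta}(\tau)-1)}\left(\frac{\overline{\beta}(\tau)}{(\underline{\beta}(\tau))^{2}}\right)^{q},
\]
and dividing by $|I(\tau)|$ shows that the target inequality $\sum|I(w)|\le q^{-2}|I(\tau)|$ is literally equivalent to the inequality \eqref{E:N1} defining $b_{\tau}$, which holds for every $q\ge b_{\tau}$. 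The point needing care---and what I expect to be the main obstacle---is Step 1: recognizing that the ``no digit $1$ in the next $q$ positions'' condition collapses a bad cylinder to the scale of an order-$2q$ cylinder via Lemma \ref{L:0k1}. The counting and the closing arithmetic are then routine, the constant $b_{\tau}$ having been tailored in \eqref{E:N1} exactly so that the bookkeeping closes.
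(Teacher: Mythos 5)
Your proposal is correct and follows essentially the same route as the paper's proof: collapse each bad cylinder via Lemma \ref{L:0k1} to $I(w)=I(w0^{q})$, bound its length by $x^{-1}(\underline{\beta}(\tau))^{1-2q}$ using Lemma \ref{L:I(w)}, count the bad words against $\#\Sigma_{\cdot}(\overline{\beta}(\tau))$, and close with the defining inequality \eqref{E:N1} of $b_{\tau}$. The only (harmless) variation is in the counting step, where you inject the suffixes into $\Sigma_{q-m}(\overline{\beta}(\tau))$ directly instead of citing Lemma \ref{L:winu} to place the full words in $\Sigma_{q}(\overline{\beta}(\tau))$; this gives a marginally sharper count that you then weaken to the same bound.
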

\begin{proof}
On one hand, for any $w\in \widehat{\mathbb{P}}_{q}(\tau)$, by Lemma \ref{L:winu}, we have $w\in \Sigma_{q}(\overline{\beta}(\tau))$. This fact implies
$$\# \widehat{\mathbb{P}}_{q}(\tau)\leq \#\Sigma_{q}(\overline{\beta}(\tau))\le \frac{\left(\overline{\beta}(\tau)\right)^{q+1}}{\overline{\beta}(\tau)-1}.$$

On the other hand, for each $w\in \widehat{\mathbb{P}}_{q}(\tau)$,
by Lemma \ref{L:0k1}, one has $I(w)=I(w0^{q})$.
Note $\overline{\beta}(w0^{q})>\underline{\beta}(w0^q)=\underline{\beta}(w)\geq \underline{\beta}(\tau)$. Then by Lemma \ref{L:I(w)},
$$
|I(w)|=|I(w0^q)|\le x^{-1}(\overline{\beta}(w0^{q}))^{1-2q}\le x^{-1}(\underline{\beta}(\tau))^{1-2q}.
$$
Therefore, by $(\ref{E:N1})$ on the choice of $b_{\tau}$, we have
\begin{equation*}
    \sum_{w\in  \widehat{\mathbb{P}}_{q}(\tau)}|I(w)|\leq \frac{(\overline{\beta}(\tau))^{q+1}}{\overline{\beta}(\tau)-1}\cdot \frac{1}{x(\underline{\beta}(\tau))^{2q-1}}=
\frac{\overline{\beta}(\tau)\underline{\beta}(\tau)}{x(\overline{\beta}(\tau)-1)}\cdot\frac{(\overline{\beta}(\tau))^{q}}{(\underline{\beta}(\tau))^{2q}}
\leq  \frac{|I(\tau)|}{q^{2}}.
\end{equation*}
\end{proof}

With Proposition \ref{P:ov/un<3} and Proposition \ref{P:1/n2} in hand, we are able to give an upper bound on the Lebesgue measure of the set $\textbf{E}_{n}(x)\cap I(u)$ when $n\in\mathbb{N}$ is large.

\begin{lemma}\label{L:ML1}
 Let $N\in \mathbb{N}$ and $u\in \mathbb{U}_{a_{N}}$. For any $n> 2b_{u}$, we have
\begin{equation*}
    \mathcal{L}(\textbf{\emph{E}}_{n}(x)\cap I(u))\leq \left(C_{u}\varphi(n)+\frac{9}{n^{2}}\right)|I(u)|, \text{ where } C_{u}=\frac{18(\overline{\beta}(u))^{4}}{x(\underline{\beta}(u)-1)^{3}}>0.
\end{equation*}
\end{lemma}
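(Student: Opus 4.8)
The plan is to estimate $\mathcal{L}(\textbf{E}_{n}(x)\cap I(u))$ by splitting the cylinders of order $q$ refining $I(u)$ into good and bad families, where $q$ is chosen as roughly $n/2$ so that $a_N \le b_u \le q < n$ and the hypotheses of Propositions \ref{P:ov/un<3} and \ref{P:1/n2} all apply. Concretely, I would write
\begin{equation*}
\textbf{E}_{n}(x)\cap I(u)=\bigcup_{w\in \mathbb{P}_{q}(u)}\big(\textbf{E}_{n}(x)\cap I(w)\big)\;\cup\;\bigcup_{w\in \widehat{\mathbb{P}}_{q}(u)}\big(\textbf{E}_{n}(x)\cap I(w)\big),
\end{equation*}
and bound the two pieces separately. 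For the bad family $\widehat{\mathbb{P}}_{q}(u)$ I throw away the constraint from $\textbf{E}_n(x)$ entirely and simply use Proposition \ref{P:1/n2}, giving $\sum_{w\in \widehat{\mathbb{P}}_{q}(u)}|I(w)|\le q^{-2}|I(u)|$; since $q$ is comparable to $n/2$, this contributes a term of order $n^{-2}|I(u)|$, which will account for the $9/n^2$ in the statement.

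The main work is the good family $\mathbb{P}_{q}(u)$. Here I want to compare $\mathcal{L}(\textbf{E}_n(x)\cap I(w))$ against $|I(w)|$ uniformly over $w\in\mathbb{P}_q(u)$. First I refine each $I(w)$ into cylinders $I(v)$ of order $n$ with $v|_q=w$ and apply Lemma \ref{L:length} to get $|\textbf{E}_n(x)\cap I(v)|\le 2x^{-1}\varphi(n)(\underline{\beta}(v))^{1-n}$. To turn this into something summable I use that $\underline{\beta}(v)\ge \underline{\beta}(w)$ together with a count of how many order-$n$ cylinders sit inside $I(w)$; Lemma \ref{L:winu} bounds that count by $\#\Sigma_{n-q}(\overline{\beta}(w))\le (\overline{\beta}(w))^{n-q+1}/(\overline{\beta}(w)-1)$. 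Feeding this in, the $\varphi(n)$-contribution from $I(w)$ is controlled by something like
\begin{equation*}
2x^{-1}\varphi(n)(\underline{\beta}(w))^{1-n}\cdot\frac{(\overline{\beta}(w))^{n-q+1}}{\overline{\beta}(w)-1},
\end{equation*}
and now the crucial cancellation of the exponential factors $(\overline{\beta}(w)/\underline{\beta}(w))^{n}$ is exactly what Proposition \ref{P:ov/un<3} delivers, since that ratio raised to the power $q+1$ (hence to any power up to $n\sim 2q$) stays bounded by a constant like $9$. After this cancellation the per-$w$ bound is of the form $C_u\,\varphi(n)\,|I(w)|$ up to the length estimates of Lemma \ref{L:I(w)} and Proposition \ref{P:Cfulen}, and summing over $w\in\mathbb{P}_q(u)\subset\Omega_q(x)$ collapses $\sum_w |I(w)|$ to at most $|I(u)|$.

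The delicate point, and the step I expect to be the genuine obstacle, is extracting the correct constant $C_u=18(\overline{\beta}(u))^{4}/\big(x(\underline{\beta}(u)-1)^{3}\big)$ with the right powers of $\overline{\beta}(u)-\underline{\beta}(u)$ and $\underline{\beta}(u)-1$. This forces me to relate $(\underline{\beta}(w))^{1-n}$ and the cylinder count back to $|I(w)|$, and the only tool giving a \emph{lower} bound on $|I(w)|$ is Proposition \ref{P:Cfulen}, which costs a factor $(\underline{\beta}(w)-1)^{2}(\overline{\beta}(w))^{-1-n}$; here I must be careful that $w$ need not be full, so I would instead pass through a full subinterval guaranteed by Lemma \ref{L:Cful}(iii) among every $q+1$ consecutive cylinders. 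Tracking these reciprocal powers of $\underline{\beta}(u)-1$ (using the uniform bound $\underline{\beta}(w)\ge\beta_N>1$ from \eqref{E:lwlurwru}) and collecting the bounded ratios from Proposition \ref{P:ov/un<3} into the harmless numerical constant $18$ is the bookkeeping that produces the stated $C_u$, and making every exponent match is where the estimate is far from trivial; once the good and bad contributions are added, the bound $\big(C_u\varphi(n)+9/n^2\big)|I(u)|$ follows.
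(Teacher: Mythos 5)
Your skeleton matches the paper's proof: the same split at $q=\lfloor n/2\rfloor$ into $\mathbb{P}_q(u)$ and $\widehat{\mathbb{P}}_q(u)$, Proposition \ref{P:1/n2} for the bad family (giving $q^{-2}\le 9/n^2$), Lemma \ref{L:length} plus a cylinder count via Lemma \ref{L:winu} and Lemma \ref{L:number} for the good family, and Proposition \ref{P:ov/un<3} to absorb the ratio $(\overline{\beta}(w)/\underline{\beta}(w))^{n}\le 3^2$. But there is a genuine gap exactly at the step you flagged as delicate: the lower bound on $|I(w)|$ for $w\in\mathbb{P}_q(u)$. Your proposed remedy --- invoking Lemma \ref{L:Cful}$(iii)$, one full cylinder among every $q+1$ consecutive ones --- cannot work. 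That lemma locates a full cylinder \emph{somewhere} among consecutive parameter cylinders; it is in general a different cylinder $I(w')$, not a subinterval of your given $I(w)$, and neighboring cylinders in the parameter space have no length comparability, so it yields no lower bound on $|I(w)|$ itself. (This is precisely the inadequacy of Lemma \ref{L:Cful}$(iii)$ that the authors point out in remark (2) of the introduction.) Notice also that your good-family argument never actually uses the defining property $w0^{q-1}1\in\Omega_{2q}(x)$ of $\mathbb{P}_q(u)$ --- a sign that something essential is missing, since without that property the claimed per-$w$ bound is false (bad words are excluded exactly because their cylinders can be too short relative to the count of their order-$n$ refinements).

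The paper's mechanism, which you need here, is: set $k_{0}=\min\{k\geq 0\colon w0^{k}1\in \Omega_{q+k+1}(x)\}$, so that $0\le k_0\le q-1$ by the definition of $\mathbb{P}_q(u)$; minimality plus Lemma \ref{L:0k1} forces $I(w)=I(w0^{k_{0}})$; then Lemma \ref{L:Cful}$(ii)$ (with $\underline{\beta}(w0^{k_0+1})\ge\underline{\beta}(u)>1$ from \eqref{E:lwlurwru}) gives that $w0^{k_{0}+1}\in\Lambda_{q+k_{0}+1}(x)$ \emph{is} full, so Proposition \ref{P:Cfulen} applies to it and yields
\begin{equation*}
|I(w)|\ \geq\ |I(w0^{k_{0}+1})|\ \geq\ (\underline{\beta}(w)-1)^{2}\,(\overline{\beta}(w))^{-2-q-k_{0}}.
\end{equation*}
This also corrects a second, quieter mismatch in your sketch: you refine $I(w)$ at level $q$ and count $\#\Sigma_{n-q}(\overline{\beta}(w))\le(\overline{\beta}(w))^{n-q+1}/(\overline{\beta}(w)-1)$, but the lower bound above carries the exponent $-2-q-k_{0}$, leaving an uncancelled factor $(\overline{\beta}(w))^{k_{0}}$ that can be as large as $(\overline{\beta}(w))^{q-1}$. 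Since $I(w)=I(w0^{k_{0}})$, one must refine at level $q+k_{0}$ instead, so the count becomes $\#\Sigma_{n-q-k_{0}}(\overline{\beta}(w))\le(\overline{\beta}(w))^{n-q-k_{0}+1}/(\overline{\beta}(w)-1)$ and the exponents in $q+k_0$ cancel exactly against the lower bound; only then does Proposition \ref{P:ov/un<3} (applied twice, since $n\le 2(q+1)$) reduce everything to the stated $C_{u}=18(\overline{\beta}(u))^{4}/\bigl(x(\underline{\beta}(u)-1)^{3}\bigr)$.
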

\begin{proof}
Choose $q=\lfloor n/2\rfloor$,
so $q\geq b_{u}\geq |u|=a_{N}$. Decompose the set $\textbf{E}_{n}(x)\cap I(u)$ into the following:
\begin{align*}
    \textbf{E}_{n}(x)\cap I(u)&=\bigcup_{w\in \mathbb{P}_{q}(u)}(\textbf{E}_{n}(x)\cap I(w)) \cup \bigcup_{w\in \widehat{\mathbb{P}}_{q}(u)}(\textbf{E}_{n}(x)\cap I(w)),
\end{align*}
where all of the unions are disjoint. Then,
\begin{equation*}
   \mathcal{L}(\textbf{E}_{n}(x)\cap I(u))=\sum_{w\in \mathbb{P}_{q}(u)}\mathcal{L}(\textbf{E}_{n}(x)\cap I(w))+\sum_{w\in \widehat{\mathbb{P}}_{q}(u)}\mathcal{L}(\textbf{E}_{n}(x)\cap I(w)).
\end{equation*}
For the second summation, note that by Proposition \ref{P:1/n2}, we have
\begin{equation*}
    \sum_{w\in \widehat{\mathbb{P}}_{q}(u)}\mathcal{L}(\textbf{E}_{n}(x)\cap I(w))\leq \sum_{w\in \widehat{\mathbb{P}}_{q}(u)}|I(w)|\leq \frac{1}{q^{2}}|I(u)|\leq\frac{9}{n^{2}}|I(u)|.
\end{equation*}
For the first summation, we claim that for any $w\in \mathbb{P}_{q}(u)$,
\begin{equation}\label{f3}
    \mathcal{L}(\textbf{E}_{n}(x)\cap I(w))\leq C_{u}\varphi(n)|I(w)|,
\end{equation}
which will lead to
\begin{equation*}
    \sum_{w\in \mathbb{P}_{q}(u)}\mathcal{L}(\textbf{E}_{n}(x)\cap I(w))\leq C_{u}\varphi(n)\sum_{w\in \mathbb{P}_{q}(u)}|I(w)|\leq C_{u}\varphi(n)|I(u)|
\end{equation*}
as desired.

Fix an arbitrary $w\in \mathbb{P}_{q}(u)$. To show (\ref{f3}), we will bound $|I(w)|$ from below and $\mathcal{L}(\textbf{E}_{n}(x)\cap I(w))$ from above, respectively.

Let $k_{0}=\min\{k\geq 0\colon w0^{k}1\in \Omega_{q+k+1}(x)\}$. By the definition of $\mathbb{P}_{q}(u)$, it is clear that $0\leq k_{0}\leq q-1$, and thus $q+k_{0}<2q\leq n$.

We first note that for any $\beta\in I(w)$, $$I(w)=I(w0^{k_{0}}), \ {\text{i.e.}}\ \varepsilon_{1}(x,\beta)\cdots \varepsilon_{q+k_{0}}(x,\beta)=w0^{k_{0}}.$$ More precisely, if $k_{0}=0$, this is trivial. If $k_{0}\geq 1$, then the minimality of $k_{0}$ implies $w0^{k_{0}-1}1\notin \Omega_{q+k_{0}}(x)$. Thus by Lemma \ref{L:0k1}, it follows that $I(w)=I(w0^{k_{0}})$.

Next, we give a lower bound of $|I(w)|$. Since $w0^{k_{0}}1\in \Omega_{q+k_{0}+1}(x)$, by Lemma \ref{L:Cful} and $(\ref{E:lwlurwru})$, we have $w0^{k_{0}+1}\in \Lambda_{q+k_{0}+1}(x)$. Thus by Proposition \ref{P:Cfulen}, it follows that
\begin{align}\label{E:lowIw}
    |I(w)|\geq |I(w0^{k_{0}+1})|&\geq \left(\underline{\beta}(w0^{k_{0}+1})-1\right)^{2}\left(\overline{\beta}(w0^{k_{0}+1})\right)^{-1-(q+k_{0}+1)}\\\nonumber
&\geq (\underline{\beta}(w)-1)^{2}(\overline{\beta}(w))^{-2-q-k_{0}}.
\end{align}

Now, we estimate the Lebesgue measure of the set $\textbf{E}_{n}(x)\cap I(w)$ from above. Since $I(w)=I(w0^{k_{0}})$ and $q+k_{0}<n$,
one has
\begin{equation*}
    I(w)=I(w0^{k_{0}})=\bigcup_{\substack{v\in \Omega_{n}(x),\\ v|_{q+k_{0}}=w0^{k_{0}}}}I(v),
\end{equation*}
where the union is disjoint. Then
\begin{equation*}\label{E:EnIw}
    \textbf{E}_{n}(x)\cap I(w)=\bigcup_{\substack{v\in \Omega_{n}(x),\\ v|_{q+k_{0}}=w0^{k_{0}}}}(\textbf{E}_{n}(x)\cap I(v))=\bigcup_{\substack{v\in \Omega_{n}(x),\\ v|_{q+k_{0}}=w0^{k_{0}}}}I(v;\varphi).
\end{equation*}
On one hand, for any $v\in \Omega_{n}(x)$ with $v|_{q+k_{0}}=w0^{k_{0}}$, by Lemma \ref{L:winu}, we have  $v\in\Sigma_{n}\left(\overline{\beta}(w)\right)$,
so $$v_{q+k_{0}+1}\cdots v_{n}\in \Sigma_{n-q-k_{0}}\left(\overline{\beta}(w)\right).$$ Thus, by Lemma \ref{L:number},
$$\# \Big\{v\in \Omega_{n}(x): v|_{q+k_{0}}=w0^{k_{0}}\Big\}\le \# \Sigma_{n-q-k_{0}}\left(\overline{\beta}(w)\right)\le
\frac{(\overline{\beta}(w))^{n-q-k_{0}+1}}{\overline{\beta}(w)-1}.$$
On the other hand, by Lemma \ref{L:length}, for each $v\in \Omega_{n}(x)$ with $v|_{q+k_{0}}=w0^{k_{0}}$, $$
|I(v;\varphi)|\le 2x^{-1}\varphi(n)(\underline{\beta}(v))^{1-n}\le 2x^{-1}\varphi(n)(\underline{\beta}(w))^{1-n}.
$$
Then it follows that \begin{align*}
    \mathcal{L}(\textbf{E}_{n}(x)\cap I(w))&\le \frac{(\overline{\beta}(w))^{n-q-k_{0}+1}}{\underline{\beta}(w)-1}\cdot \frac{2\varphi(n)}{x(\underline{\beta}(w))^{n-1}}\\
&=\frac{2\varphi(n)(\overline{\beta}(w))^{3}\underline{\beta}(w)}{x(\underline{\beta}(w)-1)^{3}}\cdot \left(\frac{\overline{\beta}(w)}{\underline{\beta}(w)}\right)^{n}\cdot (\underline{\beta}(w)-1)^{2}(\overline{\beta}(w))^{-2-q-k_{0}}.
\end{align*}
Therefore,
by $(\ref{E:lowIw})$ and Proposition \ref{P:ov/un<3}, we have
\begin{align*}\label{E:L2}
    \mathcal{L}(\textbf{E}_{n}(x)\cap I(w))
    &\le \frac{2\varphi(n)(\overline{\beta}(w))^{3}\underline{\beta}(w)}{x(\underline{\beta}(w)-1)^{3}}\cdot
\left(\frac{\overline{\beta}(w)}{\underline{\beta}(w)}\right)^{n}\cdot |I(w)|\\\nonumber
&\leq \frac{2\varphi(n)(\overline{\beta}(u))^{4}}{x(\underline{\beta}(u)-1)^{3}}\cdot 3^2\cdot |I(w)|
\\&=C_{u}\varphi(n)|I(w)|.
\end{align*}
\end{proof}

\begin{proof}[Proof of Theorem \ref{T:main}: the convergent part]
For any $N\in \mathbb{N}$ and $u\in \mathbb{U}_{a_{N}}$, in Lemma \ref{L:ML1}, we have proved that for any $n\geq 2b_{u}$,
\begin{equation*}
    \mathcal{L}(\textbf{E}_{n}(x)\cap I(u))\leq \left(C_{u}\varphi(n)+\frac{9}{n^{2}}\right)|I(u)|.
\end{equation*}
Thus
$$\sum_{n=1}^{\infty} \varphi(n)<+\infty\Longrightarrow \sum_{n=1}^{\infty}\mathcal{L}(\textbf{E}_{n}(x)\cap I(u))<+\infty.$$
Then the convergent part of the Borel-Cantelli lemma is applied.
\end{proof}

\section{Divergent Part of Theorem \ref{T:main}}
Recall that $x\in (0,1]$ is a fixed real number. In this section, we will prove the divergent part of Theorem \ref{T:main}, that is to show that the set $E_{x}(\{x_{n}\},\varphi)$ is of full Lebesgue measure in $(1,+\infty)$ if $\sum\varphi(n)=+\infty$.

To get the measure of a limsup set from below, the following Chung-Erd\"{o}s inequality \cite{Chung} is widely used.

\begin{lemma}[Chung-Erd\"{o}s inequality, \cite{Chung}]\label{L:PZI} Let $(\Omega, \mathcal{B}, \nu)$ be a a finite measure space and $\{E_n\}_{n\ge 1}$ be a sequence of measurable sets.  If $\sum_{n\ge 1}\nu(E_n)=\infty$, then $$
\nu(\limsup_{n\to \infty}E_n)\ge \limsup_{n\to \infty}\frac{(\sum_{1\le n\le N}\nu(E_n))^2}{\sum_{1\le i\ne j\le N}\nu(E_i\cap E_j)}.
$$
%
\end{lemma}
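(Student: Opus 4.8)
The plan is to prove this by a second-moment (Cauchy--Schwarz) argument, but applied to the \emph{tail} unions $\bigcup_{n=M}^{N}E_{n}$ rather than the initial unions $\bigcup_{n=1}^{N}E_{n}$. Letting $N\to\infty$ and then $M\to\infty$ is what lets us land on $\limsup_{n}E_{n}$ instead of $\bigcup_{n}E_{n}$, and this passage is the only genuinely delicate point.

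First I would record the elementary second-moment inequality. For integers $M\le N$ set $f_{M,N}=\sum_{n=M}^{N}\mathbbm{1}_{E_{n}}$, a nonnegative function supported on $B_{M,N}:=\bigcup_{n=M}^{N}E_{n}$. Applying Cauchy--Schwarz to $f_{M,N}\,\mathbbm{1}_{B_{M,N}}$ gives
$$\Big(\int f_{M,N}\,d\nu\Big)^{2}\ \le\ \nu(B_{M,N})\int f_{M,N}^{2}\,d\nu,$$
and since $\int f_{M,N}\,d\nu=\sum_{n=M}^{N}\nu(E_{n})$ and $\int f_{M,N}^{2}\,d\nu=\sum_{i,j=M}^{N}\nu(E_{i}\cap E_{j})$, this rearranges (the degenerate case of a vanishing denominator being trivial) to
$$\nu(B_{M,N})\ \ge\ \frac{\big(\sum_{n=M}^{N}\nu(E_{n})\big)^{2}}{\sum_{i,j=M}^{N}\nu(E_{i}\cap E_{j})}.$$

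Next I would fix $M$ and send $N\to\infty$. Write $S_{N}=\sum_{n=1}^{N}\nu(E_{n})\to\infty$ and $\widetilde D_{N}=\sum_{1\le i\ne j\le N}\nu(E_{i}\cap E_{j})$, so the full double sum equals $\widetilde D_{N}+S_{N}$ and the case $M=1$ of the displayed bound reads $\nu(B_{1,N})\ge S_{N}^{2}/(\widetilde D_{N}+S_{N})$. Because finiteness of $\nu$ forces $S_{N}^{2}/(\widetilde D_{N}+S_{N})\le\nu(\Omega)<\infty$, the quantity $L:=\limsup_{N}S_{N}^{2}/\widetilde D_{N}$ is finite; assuming $L>0$ (otherwise the asserted bound is trivial) and passing to a subsequence realizing it, one gets $\widetilde D_{N}\sim S_{N}^{2}/L\to\infty$, hence $S_{N}/\widetilde D_{N}\to0$ and therefore $S_{N}^{2}/(\widetilde D_{N}+S_{N})\to L$ along that subsequence. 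The crucial observation is that replacing the lower index $1$ by $M$ does not change this limit: the numerator is multiplied by $(S_{N}-S_{M-1})^{2}/S_{N}^{2}\to1$, while the denominator loses only the terms carrying an index below $M$, whose total is at most $2(M-1)S_{N}$ and so is negligible against $\widetilde D_{N}\sim S_{N}^{2}/L$. Consequently, for \emph{every} fixed $M$,
$$\nu(B_{M})\ =\ \lim_{N\to\infty}\nu(B_{M,N})\ \ge\ \limsup_{N\to\infty}\frac{\big(\sum_{n=M}^{N}\nu(E_{n})\big)^{2}}{\sum_{M\le i\ne j\le N}\nu(E_{i}\cap E_{j})}\ \ge\ L,$$
where $B_{M}=\bigcup_{n\ge M}E_{n}$.

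Finally, since $B_{M}\downarrow\limsup_{n}E_{n}$ and $\nu$ is a finite measure, continuity from above gives $\nu(\limsup_{n}E_{n})=\lim_{M}\nu(B_{M})\ge L$, which is exactly the assertion. The step I expect to be the main obstacle is precisely this transition from unions to the $\limsup$: the bare Cauchy--Schwarz estimate only controls $\nu(\bigcup_{n=1}^{N}E_{n})$, which increases to $\nu(\bigcup_{n}E_{n})\ge\nu(\limsup_{n}E_{n})$ --- the wrong direction. The tail-shift device, together with the verification that discarding finitely many initial indices leaves $\limsup_{N}S_{N}^{2}/\widetilde D_{N}$ unchanged, is what repairs the inequality and delivers the stated lower bound.
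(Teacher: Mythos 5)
The paper does not prove this lemma at all---it is quoted verbatim from Chung--Erd\H{o}s \cite{Chung} (and, in the $\limsup$ form stated here, it is really the Kochen--Stone refinement), so there is no in-paper argument to compare yours against; I have therefore checked your proof on its own terms, and it is correct. Your argument is the standard second-moment one, and you correctly identified and handled the only delicate point: Cauchy--Schwarz controls $\nu(\bigcup_{n=M}^{N}E_n)$, and the passage to $\limsup_n E_n$ requires the tail-shift together with the verification that dropping the indices below $M$ changes neither the numerator asymptotically (factor $(S_N-S_{M-1})^2/S_N^2\to 1$) nor the denominator by more than $2(M-1)S_N=o(\widetilde D_N)$ along a subsequence realizing $L$, after which continuity from above (valid since $\nu$ is finite) finishes the proof. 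Two micro-points worth making explicit if you write this up: first, the ratio in the statement is eventually well defined, since $\widetilde D_N=0$ together with $S_N>\nu(\Omega)$ would force $\nu(B_{1,N})\ge S_N>\nu(\Omega)$, a contradiction, so $\widetilde D_N>0$ for all large $N$; second, your claim that $L<\infty$ deserves the one-line computation $\widetilde D_N\ge S_N^2/\nu(\Omega)-S_N$, whence $S_N^2/\widetilde D_N\le\nu(\Omega)\,S_N/(S_N-\nu(\Omega))\to\nu(\Omega)$. Neither is a gap in substance---both follow immediately from the finiteness of $\nu$, which you invoke---so the proof stands as a complete and self-contained derivation of the cited inequality.
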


In many applications, Chung-Erd\"{o}s inequality enables one to conclude the positiveness of $\nu(\limsup E_n)$, so to get a full measure result for $\limsup E_n$, one can apply Chung-Erd\"{o}s inequality locally, i.e. apply it to the set $\limsup E_n\cap B$ for any ball $B\subset \Omega$. Then one arrives at the full measure of $\limsup E_n$ in the light of Knopp's lemma.
\begin{lemma}[Knopp \cite{Kn}, see also Lemma 3.1.13 in \cite{DaKr}]\label{L:Knopp}
Let $I\subset \mathbb{R}$ be a bounded interval. If $B\subset I$ is a Lebesgue measurable set and $\mathcal{C}$ is a class of subintervals of $I$ satisfying
\begin{enumerate}
  \item every open subinterval of $I$ is at most a countable union of disjoint elements from $\mathcal{C}$,
  \item for any $A\in \mathcal{C}$, $\mathcal{L}(A\cap B)\geq \rho\mathcal{L}(A)$, where $\rho>0$ is a constant independent of $A$,
\end{enumerate}
then $\mathcal{L}(B)=\mathcal{L}(I)=|I|$.
\end{lemma}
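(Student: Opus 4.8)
The plan is to argue by contradiction, exploiting outer regularity of Lebesgue measure together with hypothesis (2), which I read as a uniform lower density bound: $B$ occupies at least a fixed proportion $\rho$ of every element of $\mathcal{C}$, so its complement can occupy at most a $(1-\rho)$-fraction. The whole difficulty is to propagate this uniform bound from the special intervals in $\mathcal{C}$ to an arbitrary open set, and regularity is exactly the tool that lets one do so while controlling the error.

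First I would suppose $\mathcal{L}(B) < |I|$ and set $B^{c} = I \setminus B$, so that $\mathcal{L}(B^{c}) > 0$. Fix an arbitrary $\epsilon > 0$. By outer regularity of Lebesgue measure there is an open set $G \subseteq \mathbb{R}$ with $B^{c} \subseteq G$ and $\mathcal{L}(G) < \mathcal{L}(B^{c}) + \epsilon$; intersecting with the interior of $I$ and discarding the two endpoints of $I$ (a null set) I may assume $G$ is an open subset of $I$ and still $B^{c} \subseteq G$ up to measure zero. The next step converts $G$ into a disjoint union of elements of $\mathcal{C}$: since $G$ is open in $\mathbb{R}$ it is a countable disjoint union of open intervals, each of which is an open subinterval of $I$, and applying hypothesis (1) to each of these and combining gives a countable disjoint family $\{A_{j}\}_{j} \subseteq \mathcal{C}$ with $\mathcal{L}\big(G \setminus \bigcup_{j} A_{j}\big) = 0$, hence $B^{c} \subseteq \bigcup_{j} A_{j}$ up to a null set.

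Then the density hypothesis does the work. For each $j$, since $A_{j} \subseteq I$, hypothesis (2) gives $\mathcal{L}(A_{j} \cap B^{c}) = \mathcal{L}(A_{j}) - \mathcal{L}(A_{j} \cap B) \le (1-\rho)\,\mathcal{L}(A_{j})$. Summing over the disjoint family and using $\mathcal{L}(B^{c}) = \sum_{j} \mathcal{L}(A_{j} \cap B^{c})$ (valid since the $A_{j}$ are disjoint and cover $B^{c}$ up to a null set) yields
\begin{equation*}
\mathcal{L}(B^{c}) \le (1-\rho)\sum_{j} \mathcal{L}(A_{j}) = (1-\rho)\,\mathcal{L}\Big(\bigcup_{j} A_{j}\Big) \le (1-\rho)\,\mathcal{L}(G) < (1-\rho)\big(\mathcal{L}(B^{c}) + \epsilon\big).
\end{equation*}
Rearranging gives $\rho\,\mathcal{L}(B^{c}) < (1-\rho)\epsilon \le \epsilon$. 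Since $\epsilon > 0$ is arbitrary while $\rho > 0$ is fixed, letting $\epsilon \to 0$ forces $\mathcal{L}(B^{c}) = 0$, contradicting $\mathcal{L}(B^{c}) > 0$. Hence $\mathcal{L}(B) = |I|$.

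I expect the substantive content to be entirely in the two estimates above; the points needing care are bookkeeping rather than conceptual. Specifically, I would want to check that passing from $G \subseteq \mathbb{R}$ to an open subset of $I$ only discards the endpoints of $I$ (a null set), and that combining the decomposition of $G$ into open intervals with the $\mathcal{C}$-decomposition supplied by hypothesis (1) genuinely produces a single countable disjoint subfamily of $\mathcal{C}$ covering $B^{c}$ up to measure zero. The conceptual heart, which makes the contradiction work, is simply that the uniform bound in (2) is stable under countable disjoint unions, so the complement $B^{c}$ can never fill more than a $(1-\rho)$-fraction of any open set into which outer regularity squeezes it.
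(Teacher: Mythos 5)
Your proof is correct. The paper does not prove this lemma at all --- it states it as a known result, citing Knopp \cite{Kn} and Lemma 3.1.13 of \cite{DaKr} --- and your argument (outer regularity squeezing $B^{c}=I\setminus B$ into an open set of nearly minimal measure, decomposing that open set into a countable disjoint subfamily of $\mathcal{C}$ via hypothesis (1), and summing the complementary bound $\mathcal{L}(A_{j}\cap B^{c})\leq (1-\rho)\mathcal{L}(A_{j})$ to force $\rho\,\mathcal{L}(B^{c})\leq(1-\rho)\epsilon$ for every $\epsilon>0$) is precisely the standard proof found in that cited reference, with the bookkeeping points you flag (endpoints of $I$ are null; a countable union of countable disjoint $\mathcal{C}$-decompositions of the disjoint components of the open set is again a countable disjoint subfamily of $\mathcal{C}$) handled correctly.
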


\textbf{So our strategy is as follows}: For every $N\in \mathbb{N}$ and $u\in \mathbb{U}_{a_{N}}$, with the help of the Chung-Erd\"{o}s inequality, we prove that for any cylinder $I(\tau)$ contained in $I(u)$, one has
$$\mathcal{L}\Big(E_{x}(\{x_{n}\},\varphi)\cap I(\tau)\Big)\geq \rho_{u}|I(\tau)|,$$
where $\rho_{u}>0$ is a constant depending only on $x$ and $u$. Then, Knopp's Lemma enables us to conclude that $$\mathcal{L}\Big(E_{x}(\{x_{n}\},\varphi)\cap I(u)\Big)=|I(u)|.$$ Finally, by $(\ref{E:(1,+)})$, it follows that the set $E_{x}(\{x_{n}\},\varphi)$ is of full Lebesgue measure in $(1,+\infty)$.

Fix $N\in \mathbb{N}$ and $u\in \mathbb{U}_{a_{N}}$. Let $m\geq a_{N}$ and $\tau\in \Omega_{m}(x)$ with $\tau|_{a_{N}}=u$. For any $n\geq m$, set
\begin{equation*}
    \mathbb{A}_{n}(\tau)=\{w\in \Lambda_{n}(x)\colon w|_{m}=\tau\},
\end{equation*}
i.e., the collection of all $w\in \Omega_{n}(x)$ such that $I(w)$ is a full cylinder of order $n$ in the parameter space $\{\beta\in\mathbb{R}\colon \beta>1\}$ contained in $I(\tau)$.

By the definition of $\mathbb{A}_{n}(\cdot)$ and the fact that $u$ is a prefix of $\tau$, it is clear that $\mathbb{A}_{n}(\tau)\subset \mathbb{A}_{n}(u)$.
The following result says that the measure of $I(w;\varphi)$ can be well controlled when $I(w)$ is full.
\begin{proposition}\label{P:Iwp/Iw}
  For any $n\geq a_{N}=|u|$ and $w\in \mathbb{A}_{n}(u)$, we have
\begin{equation*}
    |I(w;\varphi)|\geq  \frac{1}{2}\varphi(n) |I(w)|.
\end{equation*}
\end{proposition}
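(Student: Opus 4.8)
The plan is to exploit that $w\in\mathbb{A}_{n}(u)\subseteq\Lambda_{n}(x)$ forces $f_{w}$ to be a continuous strictly increasing bijection from $I(w)$ onto $[0,1)$, and to reduce the claim to a \emph{bounded distortion} estimate for $f_{w}$. Recall from $(\ref{f2})$ that $f_{w}(\beta)=T^{n}_{\beta}x$ on $I(w)$ and that $J(w)=f_{w}(I(w))=[0,1)$ since $I(w)$ is full. Setting $A_{n}=\big(x_{n}-\varphi(n),\,x_{n}+\varphi(n)\big)\cap[0,1)$, one has $I(w;\varphi)=f_{w}^{-1}(A_{n})$ and $I(w)=f_{w}^{-1}([0,1))$. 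A short case check (according to whether $x_{n}\pm\varphi(n)$ land inside $[0,1)$), using only $x_{n}\in[0,1]$ and $\varphi(n)\le 1$, gives $|A_{n}|\ge\varphi(n)$; this is the only place the normalizations on $x_{n}$ and $\varphi$ are used.

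The core of the argument is the distortion bound $\sup_{I(w)}f'_{w}\le 2\inf_{I(w)}f'_{w}$. By $(\ref{E:f'(beta)})$ I would factor $f'_{w}(\beta)=\beta^{n-1}g(\beta)$ with $g(\beta)=nx-\sum_{i=1}^{n-1}(n-i)w_{i}\beta^{-i}\ge x>0$; since $g'\ge 0$, both factors are positive and increasing, so $f'_{w}$ is increasing on $I(w)$ and the distortion ratio equals $(\overline{\beta}/\underline{\beta})^{n-1}\cdot g(\overline{\beta})/g(\underline{\beta})$, where $\underline{\beta}=\underline{\beta}(w)$, $\overline{\beta}=\overline{\beta}(w)$. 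I would then bound the two factors separately, feeding in the smallness built into $a_{N}$ via $(\ref{E:N0})$, i.e.\ $x\beta_{N}^{n}\ge 2n^{2}$, together with $\underline{\beta},\overline{\beta}\ge\beta_{N}$ from $(\ref{E:lwlurwru})$ and $|I(w)|\le x^{-1}\overline{\beta}^{\,1-n}$ from Lemma \ref{L:I(w)}. For the first factor, $\overline{\beta}/\underline{\beta}=1+|I(w)|/\underline{\beta}\le 1+\tfrac{1}{2n^{2}}$, hence $(\overline{\beta}/\underline{\beta})^{n-1}\le e^{1/(2n)}\le e^{1/2}$. For the second, the elementary inequality $\underline{\beta}^{-i}-\overline{\beta}^{-i}\le i\,\underline{\beta}^{-i}|I(w)|/\overline{\beta}$ combined with $i(n-i)\le n^{2}/4$ and $\sum_{i}w_{i}\underline{\beta}^{-i}=x$ gives $g(\overline{\beta})-g(\underline{\beta})\le\tfrac{n^{2}}{4}\overline{\beta}^{-n}\le x/8$, so $g(\overline{\beta})/g(\underline{\beta})\le 9/8$. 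Multiplying, $\sup_{I(w)}f'_{w}/\inf_{I(w)}f'_{w}\le e^{1/2}\cdot\tfrac{9}{8}<2$.

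With this in hand I would finish by applying the mean value theorem (equivalently, a change of variables) twice: there exist $\xi,\eta\in I(w)$ with $1=|[0,1)|=f'_{w}(\xi)\,|I(w)|$ and $|A_{n}|=f'_{w}(\eta)\,|I(w;\varphi)|$, whence
$$
\frac{|I(w;\varphi)|}{|I(w)|}=|A_{n}|\cdot\frac{f'_{w}(\xi)}{f'_{w}(\eta)}\ge\frac{|A_{n}|}{2}\ge\frac{\varphi(n)}{2},
$$
which is exactly the asserted inequality $|I(w;\varphi)|\ge\tfrac12\varphi(n)|I(w)|$.

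The main obstacle is the distortion bound, and specifically its \emph{sharpness}. The crude estimate $(\overline{\beta}/\underline{\beta})^{n+1}<3$ from Proposition \ref{P:ov/un<3} is not good enough, since it only yields a factor of about $27/8>2$; one is forced to squeeze out the finer $(\overline{\beta}/\underline{\beta})^{n-1}\le e^{1/2}$ and $g(\overline{\beta})/g(\underline{\beta})\le 9/8$ directly from the defining relation $x\beta_{N}^{n}\ge 2n^{2}$ of $a_{N}$. It is precisely this quadratic margin in $(\ref{E:N0})$ that makes the product of the two factors fall just below $2$, giving the clean constant $\tfrac12$.
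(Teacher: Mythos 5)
Your proposal is correct and follows essentially the same route as the paper: fullness of $I(w)$ gives $f_w(I(w))=[0,1)$, so $f_w(I(w;\varphi))$ is an interval of length at least $\varphi(n)$, and the claim reduces via the mean value theorem (applied to both $I(w;\varphi)$ and $I(w)$) to a distortion bound for $f'_w$ on $I(w)$, extracted from exactly the same inputs, namely $(\ref{E:N0})$ and the length bound of Lemma \ref{L:I(w)}. The only variation is in packaging: the paper gets the distortion additively, via $0\le f''_w(\beta)\le n^2x\beta^{n-2}$ and $|\beta_{\ast\ast}-\beta_{\ast}|\le |I(w)|$, which yields $|f'_w(\beta_{\ast\ast})-f'_w(\beta_{\ast})|/f'_w(\beta_{\ast})\le n^2/(x\beta_N^{n})\le 1/2$, whereas you factor $f'_w(\beta)=\beta^{n-1}g(\beta)$ and bound the two ratios multiplicatively ($e^{1/2}\cdot\tfrac{9}{8}<2$) -- a correct, equivalent computation reaching the same constant.
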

\begin{proof}
By Lemma \ref{L:length}, we know that the set
\begin{equation*}
    I(w;\varphi)=\{\beta\in I(w)\colon |T^{n}_{\beta}x-x_{n}|<\varphi(n)\}
\end{equation*}
is a subinterval of $I(w)$. Recall (\ref{f2}) and (\ref{E:f'(beta)}) that on the interval $I(w)$,
the function \begin{equation*}
  f_{w}(\beta)=
  \beta^{n}\left(x-\sum_{i=1}^{n}\frac{w_{i}}{\beta^{i}}\right)
\end{equation*} is equal to $T^{n}_{\beta}x$, continuous and strictly
increasing with $f_{w}'(\beta)\geq x\beta^{n-1}>0$.

Since
$I(w)$ is a full cylinder,
one has $f_{w}(\underline{\beta}(w))=0$ and $f_{w}(\overline{\beta}(w))=1$.
Thus it follows \begin{align*}
f_w\Big(I(w;\varphi)\Big)&=f_w\Big(I(w)\Big)\cap \Big(x_n-\varphi(n), x_n+\varphi(n)\Big)\\&=[0,1)\cap \Big(x_n-\varphi(n), x_n+\varphi(n)\Big),
\end{align*} which is an interval of length no smaller than $\varphi(n)$. Then using the mean value theorem on the intervals $I(w;\varphi)$ and $I(w)$ respectively, there exist some $\beta_{\ast}\in I(w;\varphi)$ and $\beta_{\ast\ast}\in I(w)$ such that
\begin{equation*}\label{E:Iwp/Iw}
    \frac{|I(w;\varphi)|}{|I(w)|}
\geq \frac{f'_{w}(\beta_{\ast\ast})\cdot\varphi(n)}{f'_{w}(\beta_{\ast})}.
\end{equation*}
Therefore, we only need to show $f'_{w}(\beta_{\ast\ast})/f'_{w}(\beta_{\ast})\geq 1/2$.

Note that on the interval $I(w)$, we have
\begin{align*}
    0\leq f''_{w}(\beta)&=\beta^{n-2}\left(n(n-1)x-\sum_{i=1}^{n-2}\frac{(n-i)(n-i-1)w_i}{\beta^i}\right)\leq n^{2}x\beta^{n-2}.
\end{align*}
Together with $f'_{w}(\beta_{\ast})\geq x\beta_{\ast}^{n-1}>0$, we have
\begin{equation*}
    \frac{|f'_{w}(\beta_{\ast\ast})-f'_{w}(\beta_{\ast})|}{|f'_{w}(\beta_{\ast})|}\leq \frac{n^{2}x(\overline{\beta}(w))^{n-2}|\beta_{\ast\ast}-\beta_{\ast}|}{f'_{w}(\beta_{\ast})}\leq \frac{n^{2}x(\overline{\beta}(w))^{n-2}|I(w)|}{x(\underline{\beta}(w))^{n-1}}.
\end{equation*}
Then, by Lemma \ref{L:I(w)} on the length of $I(w)$, it follows that
\begin{equation*}
    \frac{|f'_{w}(\beta_{\ast\ast})-f'_{w}(\beta_{\ast})|}{|f'_{w}(\beta_{\ast})|}\leq\frac{n^{2}(\overline{\beta}(w))^{n-2}}{(\underline{\beta}(w))^{n-1}}\cdot x^{-1}(\overline{\beta}(w))^{1-n}=\frac{n^{2}}{x(\underline{\beta}(w))^{n-1}\overline{\beta}(w)}\leq \frac{n^{2}}{x\beta_{N}^{n}}\leq \frac{1}{2},
\end{equation*} where, for the last two inequalities, (\ref{E:N0}) and (\ref{E:lwlurwru}) are used.
Therefore,
\begin{equation*}
    \frac{f'_{w}(\beta_{\ast\ast})}{f'_{w}(\beta_{\ast})}=1+\frac{f'_{w}(\beta_{\ast\ast})-f'_{w}(\beta_{\ast})}{f'_{w}(\beta_{\ast})}\geq 1-\frac{|f'_{w}(\beta_{\ast\ast})-f'_{w}(\beta_{\ast})|}{|f'_{w}(\beta_{\ast})|}\geq \frac{1}{2}.
\end{equation*}
\end{proof}

For any $m\geq a_{N}$ and $\tau \in\Omega_{m}(x)$ with $\tau|_{a_{N}}=u$, we will prove the following key lemma (Lemma \ref{L:ML2}). Note that $$
E_{x}(\{x_{n}\},\varphi)\cap I(\tau)=\limsup_{l\to \infty} \textbf{E}_{l}(x)\cap I(\tau),
$$ so we have to give an effective lower bound estimation on the Lebesgue measure of $\textbf{E}_{l}(x)\cap I(\tau)$. This is possible, because
\begin{itemize}\item on one hand, Proposition \ref{P:Iwp/Iw} says that the Lebesgue measure of $I(w;\varphi)$ can be well controlled when $I(w)$ is full;

\item on the other hand, Lemma \ref{L:CcC} guarantees a sufficient large collection of full cylinders. \end{itemize}

This renders us a nice subset $F_{l}(\tau;\varphi)$ of $\textbf{E}_{l}(x)\cap I(\tau)$, which will be given in detail in Subsection $6.1$. Then, in Subsection $6.2$, we shall estimate the Lebesgue measure of the set $F_{l}(\tau;\varphi)$. The proofs of Lemma \ref{L:ML2} and the divergent part of Theorem \ref{T:main} are given in Subsection $6.3$. As what we will see, full cylinders play essential roles in the estimation of the lower bound of the Lebesgue measure of the set $E_{x}(\{x_{n}\},\varphi)\cap I(\tau)$.

\begin{lemma}\label{L:ML2}
  For any $m\geq a_{N}$ and $\tau \in\Omega_{m}(x)$ with $\tau|_{a_{N}}=u$, we have
  \begin{equation*}
    \mathcal{L}(E_{x}(\{x_{n}\},\varphi)\cap I(\tau))\geq \rho_{u}|I(\tau)|,
  \end{equation*}
where $\rho_{u}>0$ is a constant depending only on $x$ and $u$.
\end{lemma}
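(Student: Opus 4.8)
The plan is to write $E_{x}(\{x_{n}\},\varphi)\cap I(\tau)=\limsup_{l}\big(\textbf{E}_{l}(x)\cap I(\tau)\big)$ and extract a definite proportion of its measure via the Chung-Erd\"{o}s inequality (Lemma \ref{L:PZI}) on the finite measure space $(I(\tau),\mathcal{L})$. For each large $l$ I would replace $\textbf{E}_{l}(x)\cap I(\tau)$ by the cleaner subset
\[
F_{l}(\tau;\varphi)=\bigcup_{w\in\mathbb{A}_{l}(\tau)}I(w;\varphi),
\]
built only from the full cylinders of order $l$ inside $I(\tau)$. On such full $w$, Proposition \ref{P:Iwp/Iw} supplies $|I(w;\varphi)|\ge\tfrac12\varphi(l)|I(w)|$, so $\mathcal{L}(F_{l}(\tau;\varphi))\ge\tfrac12\varphi(l)\sum_{w\in\mathbb{A}_{l}(\tau)}|I(w)|$. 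Everything then rests on a lower bound $\sum_{w\in\mathbb{A}_{l}(\tau)}|I(w)|\ge\rho\,|I(\tau)|$ with $\rho>0$ independent of $l$; granting it, $\sum_{l}\mathcal{L}(F_{l}(\tau;\varphi))\ge\tfrac{\rho}{2}|I(\tau)|\sum_{l}\varphi(l)=\infty$.

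This uniform positive proportion of full-cylinder length is the heart of the matter and the step I expect to fight hardest. The naive attempt---fix one full cylinder $\eta\subset I(\tau)$ of bounded order and fill it by full extensions $\eta v$ with $v\in\Xi_{l-|\eta|}(\underline{\beta}(\eta))$ via Lemma \ref{L:CcC} and Proposition \ref{P:numperf}---fails, because the base spreads across $I(\eta)$ and the estimate carries a fatal factor $\big(\underline{\beta}(\eta)/\overline{\beta}(\eta)\big)^{l}\to0$. The remedy is to localise at the right scale. Set $q=\lfloor l/3\rfloor$, partition $I(\tau)$ into its order-$q$ subcylinders, and discard the bad family $\widehat{\mathbb{P}}_{q}(\tau)$, whose total length is $\le q^{-2}|I(\tau)|$ by Proposition \ref{P:1/n2}. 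For each good $\kappa\in\mathbb{P}_{q}(\tau)$, Lemma \ref{L:0k1} yields a full cylinder $\eta_{\kappa}=\kappa0^{k_{0}+1}$ of order $n_{\kappa}\le 2q$ (where $k_{0}=\min\{k:\kappa0^{k}1\in\Omega_{q+k+1}(x)\}\le q-1$), and extending it by $v\in\Xi_{l-n_{\kappa}}(\underline{\beta}(\eta_{\kappa}))$ (Lemma \ref{L:CcC}) produces at least $\lambda\,\underline{\beta}(\eta_{\kappa})^{\,l-n_{\kappa}}$ full cylinders of order $l$ in $I(\kappa)$, each of length $\ge(\beta_{N}-1)^{2}\overline{\beta}(\kappa)^{-1-l}$ by Proposition \ref{P:Cfulen}. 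Crucially, $I(\kappa)=I(\kappa0^{k_{0}})$ sharpens the upper bound to $|I(\kappa)|\le x^{-1}\overline{\beta}(\kappa)^{2-n_{\kappa}}$ via Lemma \ref{L:I(w)}, so the proportion collapses to
\[
\lambda x(\beta_{N}-1)^{2}\,\underline{\beta}(\kappa)^{-3}\Big(\tfrac{\overline{\beta}(\kappa)}{\underline{\beta}(\kappa)}\Big)^{\,n_{\kappa}-l-3}.
\]
Here the choice $q=\lfloor l/3\rfloor$ is decisive: $l-n_{\kappa}\le 2q$, and by Proposition \ref{P:ov/un<3} the factor $\big(\overline{\beta}(\kappa)/\underline{\beta}(\kappa)\big)^{l-n_{\kappa}}$ is bounded by an absolute constant, giving a uniform $\ge\rho_{2}|I(\kappa)|$. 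Summing over good $\kappa$ yields $\sum_{w\in\mathbb{A}_{l}(\tau)}|I(w)|\ge\tfrac{\rho_{2}}{2}|I(\tau)|$ for all large $l$, hence $\mathcal{L}(F_{l}(\tau;\varphi))\ge c\,\varphi(l)|I(\tau)|$.

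The second ingredient is quasi-independence. For $i<j$ every order-$j$ full cylinder lies in a unique order-$i$ one, so $F_{i}\cap F_{j}=\bigcup_{w\in\mathbb{A}_{i}(\tau)}\big(I(w;\varphi)\cap F_{j}\big)$. Inside a full $w$ I would cover the short interval $I(w;\varphi)$ by the order-$j$ full subcylinders meeting it; on each such $v$ the two bounds from Lemma \ref{L:length} and Proposition \ref{P:Cfulen}, together with Proposition \ref{P:ov/un<3}, give $|I(v;\varphi)|\le C'\varphi(j)|I(v)|$, whence $\mathcal{L}(I(w;\varphi)\cap F_{j})\le 2C'\varphi(j)|I(w;\varphi)|$. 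Summing over $w$ gives $\mathcal{L}(F_{i}\cap F_{j})\le 2C'\varphi(j)\mathcal{L}(F_{i})$, and substituting $\varphi(j)\le \mathcal{L}(F_{j})/(c\,|I(\tau)|)$ turns this into $\mathcal{L}(F_{i}\cap F_{j})\le C''\,\mathcal{L}(F_{i})\mathcal{L}(F_{j})/|I(\tau)|$.

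Finally I would feed both estimates into Lemma \ref{L:PZI} on $(I(\tau),\mathcal{L})$: the divergence of $\sum_{l}\mathcal{L}(F_{l}(\tau;\varphi))$ together with $\sum_{1\le i\ne j\le L}\mathcal{L}(F_{i}\cap F_{j})\le \tfrac{C''}{|I(\tau)|}\big(\sum_{l\le L}\mathcal{L}(F_{l})\big)^{2}$ yields $\mathcal{L}(\limsup_{l}F_{l})\ge |I(\tau)|/C''$. Since $\limsup_{l}F_{l}\subset E_{x}(\{x_{n}\},\varphi)\cap I(\tau)$ and $C''$ depends only on $x$ and $u$ (through $\beta_{N}$, $\overline{\beta}(u)$, $\lambda$ and the constants above), this is exactly $\rho_{u}|I(\tau)|$ with $\rho_{u}=1/C''$, which proves Lemma \ref{L:ML2}.
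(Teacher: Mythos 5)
Your overall architecture matches the paper's (full cylinders in parameter space, Proposition \ref{P:Iwp/Iw} for the lower bound on $|I(w;\varphi)|$, a good/bad decomposition at scale $q\asymp l$ via Propositions \ref{P:1/n2} and \ref{P:ov/un<3}, then the Chung-Erd\"{o}s inequality), and your lower-bound half --- $\mathcal{L}(F_l(\tau;\varphi))\ge c\,\varphi(l)|I(\tau)|$ via $q=\lfloor l/3\rfloor$, Lemma \ref{L:0k1}, Lemma \ref{L:CcC}, Propositions \ref{P:numperf} and \ref{P:Cfulen} --- is sound and is essentially the paper's Proposition \ref{P:geq} (which takes $q=\lfloor l/4\rfloor$). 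But there is a genuine gap in your quasi-independence step. Because you take $F_l=\bigcup_{w\in\mathbb{A}_l(\tau)}I(w;\varphi)$ over \emph{all} full words, your claimed bound $\mathcal{L}(I(w;\varphi)\cap F_j)\le 2C'\varphi(j)|I(w;\varphi)|$ is false. The order-$j$ full words $v$ with $I(v;\varphi)\cap I(w;\varphi)\neq\varnothing$ form a lexicographic block; the middle ones satisfy $I(v)\subset I(w;\varphi)$ (this is (\ref{f5})), so their total contribution is indeed $\le C'\varphi(j)|I(w;\varphi)|$, but the two \emph{extreme} ones can have $|I(v)|\gg|I(w;\varphi)|$: roughly $|I(w;\varphi)|\asymp\varphi(i)\beta^{-i}$ while $|I(v)|\asymp\beta^{-j}$, so whenever $\varphi(i)\ll\beta^{-(j-i)}$ (e.g.\ $j=i+1$ with $\varphi(i)$ tiny, perfectly compatible with $\sum\varphi(n)=\infty$) the boundary contribution $C'\varphi(j)|I(v)|$ swamps $C'\varphi(j)|I(w;\varphi)|$, and your pairwise bound $\mathcal{L}(F_i\cap F_j)\le C''\mathcal{L}(F_i)\mathcal{L}(F_j)/|I(\tau)|$ fails for nearby pairs, collapsing the Chung-Erd\"{o}s application as you wrote it. This boundary problem is exactly what the paper's pruning is built to kill: it discards the lexicographic extremes $\mathbb{H}^{\ast}_l(w)$ for every intermediate full $w$ with $|w|\le l-c_u$, defines $F_l$ over $\mathbb{B}_l(\tau)$ in (\ref{E:Bltau}), pays for the pruning in Step 3 of Proposition \ref{P:geq} (at least one third of $\mathbb{A}_l(\varpi)$ survives, which needs the growth estimate and the gap $c_u$), proves Proposition \ref{P:leq} only for $l\ge n+c_u$, and disposes of the near-diagonal pairs $n<l<n+c_u$ in the Chung-Erd\"{o}s sum by the trivial bound $\mathcal{L}(F_l\cap F_n)\le\mathcal{L}(F_n)$, a linear term dominated by the quadratic one.

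For completeness: your route is repairable without pruning, but only by an estimate you did not supply --- bounding each boundary term by roughly $C\varphi(j)\,\underline{\beta}(u)^{-(j-i)}|I(w)|$ (combining Lemma \ref{L:length}, Proposition \ref{P:Cfulen} and Proposition \ref{P:ov/un<3}) and summing over \emph{all} pairs $i<j\le N$, where the geometric decay in $j-i$ produces a term of order $\sum_{j\le N}\mathcal{L}(F_j)$, again linear and hence harmless in the Chung-Erd\"{o}s ratio. As written, however, your factor $2$ silently (and wrongly) absorbs the extremes, and your claim of quasi-independence for all $i<j$ cannot hold even after such a repair. A minor misstatement: ``every order-$j$ full cylinder lies in a unique order-$i$ one'' is false if read as ``full order-$i$ one'' --- the order-$i$ ancestor of a full word need not be full --- though this does not affect the decomposition $F_i\cap F_j=\bigcup_{w\in\mathbb{A}_i(\tau)}\bigl(I(w;\varphi)\cap F_j\bigr)$, which holds by the definition of $F_i$.
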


\subsection{Structure of the set $F_{l}(\tau;\varphi)$} Recall that $u\in \mathbb{U}_{a_{N}}$ is fixed and $\underline{\beta}(u)\geq \beta_{N}>1$. Let $\lambda_{u}\in(0, 1)$ be a real number such that
   \begin{equation*}
\lambda_{u}-\lambda_{u}\ln \lambda_{u}<\frac{\left(\underline{\beta}(u)-1\right)^{2}}{\left(\overline{\beta}(u)\right)^{3}}\quad\text{and}\quad\lambda_u<(\overline{\beta}(w))^{-1},
\end{equation*}
to fulfill the conditions in Proposition \ref{P:numperf}.
Let $c_{u}$ be a positive integer large enough such that $c_{u}\geq-\log_{\underline{\beta}(u)}\lambda_{u}$ and for all $n\geq c_{u}$, we have
\begin{equation}\label{E:Lu}
    \lambda_{u}\cdot \left(\underline{\beta}(u)\right)^{n}\geq 3n\geq \frac{6\underline{\beta}(u)}{\underline{\beta}(u)-1}.
\end{equation}
Then by Proposition \ref{P:numperf}, for any $\beta\in I(u)$, we have
\begin{equation}\label{E:Xi>Si}
    \#\Xi_{n}(\beta)\geq \lambda_{u}\#\Sigma_{n}(\beta)\quad\text{ for all } n\geq c_{u}.
\end{equation}

Fix $m\geq a_{N}$ and $\tau \in\Omega_{m}(x)$ with $\tau|_{a_{N}}=u$. Recall the choice of $b_{\tau}$ in (\ref{E:N1}).
Now we are intended to search for a nice subset of $\textbf{E}_{l}(x)\cap I(\tau)$ for any $l\geq 4b_{\tau}+2c_{u}$.
A potential candidate is the set $$
\widetilde{F}_l(\tau; \varphi):=\bigcup_{v\in \mathbb{A}_{l}(\tau)}I(v;\varphi),
$$ where as defined before$$\mathbb{A}_{l}(\tau)=\{w\in \Lambda_{l}(x)\colon w|_{m}=\tau\}.$$
We will give some further modification to cut off some unpleasant parts, which will facilitate the estimation of the covariance $\mathcal{L}(\widetilde{F}_n(\tau; \varphi)\cap \widetilde{F}_l(\tau; \varphi))$ later.

For any $m\leq n\leq l-c_{u}$ and $w\in\mathbb{A}_{n}(\tau)$, define
\begin{equation*}
    \mathbb{H}_{l}(w)=\{v\in  \mathbb{A}_{l}(\tau)\colon I(w;\varphi)\cap I(v;\varphi)\neq \varnothing\}.
\end{equation*}
In fact, the words in $\mathbb{H}_{l}(w)$ are nothing but those which can contribute to the Lebesgue measure of $\widetilde{F}_n(\tau; \varphi)\cap \widetilde{F}_l(\tau; \varphi)$, since $$
\widetilde{F}_n(\tau; \varphi)\cap \widetilde{F}_l(\tau; \varphi)=\bigcup_{w\in \mathbb{A}_{n}(\tau)}\left[\bigcup_{v\in  \mathbb{A}_{l}(\tau)}\left(I(w;\varphi)\cap I(v;\varphi)\right)\right].
$$

Since $I(v;\varphi)\subset I(v)$ and $I(w;\varphi)\subset I(w)$, the non-emptyness of
$I(w;\varphi)\cap I(v;\varphi)$ implies $v|_{n}=w$.
Thus $\mathbb{H}_{l}(w)\subset \mathbb{A}_{l}(w)\subset \mathbb{A}_{l}(\tau)$.
For the position relations between the sets $I(u), I(\tau), I(w)$ and $I(v)$, we have the following diagram:
\begin{center}
  \unitlength 3pt
\begin{picture}(115,37)
\linethickness{1pt}
  \put(12,33){\line(1,0){67}}
  \put(0.5,32){$I(u):$}
  \put(83,32){$|u|=a_{N}$}

  \put(25,26){\line(1,0){44}}
  \put(0.5,25){$I(\tau):$}
  \put(83,25){$|\tau|=m\geq a_{N}$}

  \put(34,15){\line(1,0){32}}
  \put(0,14){$I(w):$}
   \put(46,18){$I(w;\varphi)$}
  \put(83,14){$m\leq |w|=n\leq l-c_{u}$}

  \put(34,4){\line(1,0){32}}
  \put(0.5,3){$I(v):$}
  \put(38.5,8){$I(v';\varphi)$}
  \put(49.5,8){$I(v;\varphi)$}
  \put(59.5,8){$I(v'';\varphi)$}
  \put(83,3){$|v|=l\geq 4b_{\tau}+2c_{u}.$}

    \put(40.5,3.5){\line(0,1){1}}
    \put(49,3.5){\line(0,1){1}}
    \put(58,3.5){\line(0,1){1}}

\linethickness{3pt}
  \put(41,15){\line(1,0){22}}
  \put(35,4){\line(1,0){4}}
  \put(43,4){\line(1,0){5}}
  \put(51,4){\line(1,0){5}}
  \put(60,4){\line(1,0){5}}
\end{picture}
\end{center}
Since $I(w;\varphi)$ is an interval, for any $v\in \mathbb{H}_{l}(w)$ but not the lexicographically smallest one nor the lexicographically largest one in $\mathbb{H}_{l}(w)$, one has \begin{equation}\label{f5}
I(w;\varphi)\supset I(v)\supset I(v;\varphi).
\end{equation} So we discard the (two) elements at the boundary of $\mathbb{H}_{l}(w)$. More precisely, \begin{itemize}\item
if $\#\mathbb{H}_{l}(w)\leq 1$, define
$\mathbb{H}^{\ast}_{l}(w)=\mathbb{H}_{l}(w)$;

\item if $\#\mathbb{H}_{l}(w)\ge 2$, define $\mathbb{H}^{\ast}_{l}(w)$ to be the set consisting of the lexicographically smallest and largest elements from $\mathbb{H}_{l}(w)$. For example, if $\mathbb{H}_{l}(w)=\{v(1)\prec v(2)\prec \cdots\prec v(k)\}$ with $k\geq 2$, then $\mathbb{H}^{\ast}_{l}(w)=\{v(1), v(k)\}$.
\end{itemize}
Finally, let
\begin{equation}\label{E:Bltau}
     \mathbb{B}_{l}(\tau)=\mathbb{A}_{l}(\tau)\backslash \bigcup_{m\leq n\leq l-c_{u}}\bigcup_{w\in \mathbb{A}_{n}(\tau)}\mathbb{H}^{\ast}_{l}(w).
\end{equation}
The choice of $c_u$ makes that $\#\mathbb{B}_{l}(\tau)$ is almost the same as $\# \mathbb{A}_{l}(\tau)$ (see (\ref{E:BlAl}) below). Then the desired subset of $\textbf{E}_{l}(x)\cap I(\tau)$ is defined as
\begin{equation}\label{E:Fl}
    F_{l}(\tau;\varphi):=\bigcup_{v\in \mathbb{B}_{l}(\tau)}I(v;\varphi).
\end{equation}


\subsection{Lebesgue measure of the set $F_{l}(\tau;\varphi)$}
In this subsection, we will give the proofs of the following two propositions.
\begin{proposition}\label{P:geq}
Let $m\geq a_{N}$ and $\tau \in\Omega_{m}(x)$ with $\tau|_{a_{N}}=u$. For any $l\geq 4b_{\tau}+2c_{u}$, we have
\begin{equation*}
    \mathcal{L}(F_{l}(\tau;\varphi))\geq D_{u}\varphi(l)|I(\tau)|,\quad\text{ where } D_{u}= \frac{x\lambda_{u}}{243}\cdot\frac{(\underline{\beta}(u)-1)^{2}}{(\overline{\beta}(u))^{3}}>0.
\end{equation*}
\end{proposition}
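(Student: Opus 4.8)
The plan is to turn the lower bound for $\mathcal{L}(F_l(\tau;\varphi))$ into a statement about the total length occupied by full cylinders of order $l$ inside $I(\tau)$, and then to produce enough such full cylinders with Lemma \ref{L:CcC} and the proportion estimate $(\ref{E:Xi>Si})$. First I would use that, by $(\ref{E:Fl})$, the union defining $F_l(\tau;\varphi)$ is disjoint, so $\mathcal{L}(F_l(\tau;\varphi))=\sum_{v\in\mathbb{B}_l(\tau)}|I(v;\varphi)|$. Every $v\in\mathbb{B}_l(\tau)\subseteq\mathbb{A}_l(u)$ labels a full cylinder, so Proposition \ref{P:Iwp/Iw} gives $|I(v;\varphi)|\geq\tfrac12\varphi(l)|I(v)|$ and hence $\mathcal{L}(F_l(\tau;\varphi))\geq\tfrac12\varphi(l)\sum_{v\in\mathbb{B}_l(\tau)}|I(v)|$. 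Next I would invoke the comparison $(\ref{E:BlAl})$ between $\mathbb{B}_l(\tau)$ and $\mathbb{A}_l(\tau)$ --- this is exactly what the choice of $c_u$ is for --- to replace the sum over $\mathbb{B}_l(\tau)$ by a fixed fraction of the sum over $\mathbb{A}_l(\tau)$; the only words thrown away are the two lexicographic extremes of each $\mathbb{H}_l(w)$, which carry a negligible proportion of the length. Everything is then reduced to the core inequality $\sum_{v\in\mathbb{A}_l(\tau)}|I(v)|\geq\rho\,|I(\tau)|$ for a constant $\rho$ depending only on $x$ and $u$.

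To prove the core inequality I would manufacture full cylinders as follows. If $I(w)$ is a full cylinder of some order $n$ inside $I(\tau)$, then by Lemma \ref{L:CcC} every word $wv$ with $v\in\Xi_{l-n}(\underline{\beta}(w))$ lies in $\Lambda_l(x)$, so it is a genuine element of $\mathbb{A}_l(\tau)$; since $\underline{\beta}(w)\in I(u)$, the proportion estimate $(\ref{E:Xi>Si})$ (that is, Proposition \ref{P:numperf}) gives $\#\Xi_{l-n}(\underline{\beta}(w))\geq\lambda_u(\underline{\beta}(w))^{\,l-n}$ as soon as $l-n\geq c_u$. Bounding each length $|I(wv)|$ from below by Proposition \ref{P:Cfulen} and $|I(w)|$ from above by Lemma \ref{L:I(w)}, the powers of $l$ cancel and one is left with the factor $(\underline{\beta}(w)/\overline{\beta}(w))^{\,l}$. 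Here Proposition \ref{P:ov/un<3} enters: choosing the intermediate order $n$ proportional to $l$, say $n\geq (l+1)/5$ while still $l-n\geq c_u$ (which is possible precisely because $l\geq 4b_\tau+2c_u$), the comparability estimate forces $(\underline{\beta}(w)/\overline{\beta}(w))^{\,l}\geq 3^{-5}=1/243$. This yields $\sum_{v:\,wv\in\Lambda_l}|I(wv)|\geq\rho_0\,|I(w)|$ with $\rho_0=x\lambda_u(\underline{\beta}(u)-1)^2/\big(243\,(\overline{\beta}(u))^{3}\big)$, which is the constant advertised in $D_u$.

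It then remains to cover a definite proportion of $I(\tau)$ by full cylinders of this intermediate order $n$ and to sum the previous estimate over them. The distinguished full cylinder $\tau 0^{k_0+1}$, with $k_0=\min\{k\geq 0:\tau 0^{k}1\in\Omega(x)\}$, together with the identity $I(\tau)=I(\tau 0^{k_0})$ from Lemma \ref{L:0k1}, furnishes a distortion-free starting point (the comparison $|I(\tau 0^{k_0+1})|\geq x(\underline{\beta}(u)-1)^2(\overline{\beta}(u))^{-3}|I(\tau)|$ has no power of $l$ in it because numerator and denominator share $\overline{\beta}(\tau)$), and the abundance of full cylinders at every scale --- the parameter-space analogue of $(\ref{E:Signum})$, coming from Lemma \ref{L:Cful}$(ii)$ and the extension property in Lemma \ref{L:CcC} --- propagates this abundance up to the intermediate order $n\asymp l$.

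The step I expect to be the main obstacle is exactly this passage from the \emph{number} of full cylinders, where Proposition \ref{P:numperf} supplies a positive proportion, to their total \emph{length}. In the parameter space a full cylinder of order $l$ has length of order $\beta^{-l}$ with $\beta$ ranging over all of $I(\tau)$, so two full cylinders of order $l$ sitting near the two ends of $I(\tau)$ can differ in length by a factor as large as $(\overline{\beta}(\tau)/\underline{\beta}(\tau))^{\,l}$; a naive ``count times minimal length'' bound therefore loses an unbounded factor. Taming this exponential distortion is the whole point of keeping the intermediate order proportional to $l$ and of working inside narrow full cylinders, and it is precisely what forces the hypothesis $l\geq 4b_\tau+2c_u$ and produces the constant $243=3^5$ through repeated use of the comparability estimate of Proposition \ref{P:ov/un<3}.
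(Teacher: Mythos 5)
Your opening reduction and your per-cylinder bookkeeping do match the paper: the disjointness in $(\ref{E:Fl})$ plus Proposition \ref{P:Iwp/Iw} gives $\mathcal{L}(F_{l}(\tau;\varphi))\geq \tfrac12\varphi(l)\sum_{v\in\mathbb{B}_{l}(\tau)}|I(v)|$, and the computation ``count from Lemma \ref{L:CcC} and $(\ref{E:Xi>Si})$, times the minimal length from Proposition \ref{P:Cfulen}, against the upper bound from Lemma \ref{L:I(w)}, with distortion tamed by Proposition \ref{P:ov/un<3}'' is exactly the paper's Step 1 and Step 4, and you even account correctly for where $243=3^{5}$ comes from. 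The genuine gap is in the step you defer to your last paragraph: producing full cylinders of intermediate order $n\asymp l$ whose union covers a \emph{fixed proportion of the length} of $I(\tau)$. Your mechanism --- the distinguished full cylinder $\tau 0^{k_{0}+1}$ plus ``propagation of abundance'' --- cannot deliver an $l$-independent constant. The order of $\tau 0^{k_{0}+1}$ is bounded in terms of $\tau$ alone, so a one-step extension up to order $n\asymp l$ incurs the distortion factor $\left(\overline{\beta}/\underline{\beta}\right)^{n}\leq 3^{\,n/(m+k_{0}+2)}$, which is unbounded as $l\to\infty$; iterating scale-doublings instead loses a factor $\rho_{1}^{\log_{2}(l/n_{0})}$, polynomially small in $l$. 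Worse, at intermediate scales inside $I(\tau)$ there can be long single-child chains $I(w)=I(w0^{j})$ (words with $w0^{k}1\notin\Omega(x)$ for many $k$, cf. Lemma \ref{L:0k1}); such cylinders spawn no full sub-cylinders for many generations and can carry most of the length, so no purely count-based abundance argument (Lemma \ref{L:Cful}$(ii)$ or the analogue of $(\ref{E:Signum})$) sees them. Controlling exactly these chains is the content of the good/bad decomposition $\mathbb{P}_{q}(\tau)\cup\widehat{\mathbb{P}}_{q}(\tau)$ at $q=\lfloor l/4\rfloor$ together with Proposition \ref{P:1/n2} (total bad length at most $q^{-2}|I(\tau)|$, which rests on the choice $(\ref{E:N1})$ of $b_{\tau}$): this is the real reason for the hypothesis $l\geq 4b_{\tau}+2c_{u}$, and your proposal never invokes Proposition \ref{P:1/n2} or the splitting at all. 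For a good $\varpi$ one has $k_{0}(\varpi)\leq q-1$ \emph{by definition}, so the full cylinder $\varpi 0^{k_{0}+1}$ has order at most $2q\leq l/2$, i.e.\ already comparable to $l$, and only then does your extension estimate apply with uniform constants; your ratio $|I(\tau 0^{k_{0}+1})|\geq x(\underline{\beta}(u)-1)^{2}(\overline{\beta}(u))^{-3}|I(\tau)|$ is correct but is applied at the wrong (bounded) scale.

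The second, related flaw is your treatment of $\mathbb{B}_{l}(\tau)$ versus $\mathbb{A}_{l}(\tau)$. You propose to discard the boundary words of each $\mathbb{H}_{l}(w)$ \emph{globally}, asserting via $(\ref{E:BlAl})$ that they ``carry a negligible proportion of the length''. But $(\ref{E:BlAl})$ is a \emph{cardinality} estimate, and in the paper it is established only locally, comparing $\#(\mathbb{B}_{l}(\tau)\cap\mathbb{A}_{l}(\varpi))$ with $\#\mathbb{A}_{l}(\varpi)$ inside a single good $\varpi$ of order $q$; it also needs the lower bound $\#\mathbb{A}_{l}(\varpi)\geq 3l/2$ to absorb the $2(q-m)$ removals coming from levels $n<q$, where only the one word $w=\varpi|_{n}$ can interfere. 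Converting a count deficit into a length deficit over all of $I(\tau)$ is precisely the exponential-distortion obstacle you yourself identify, since full cylinders of order $l$ near the two ends of $I(\tau)$ differ in length by $(\overline{\beta}(\tau)/\underline{\beta}(\tau))^{l}$; the conversion is legitimate only after localizing to scale $q$, where Proposition \ref{P:ov/un<3} makes all order-$l$ full cylinders comparable. So the correct order of operations is the paper's: decompose $I(\tau)$ at scale $q=\lfloor l/4\rfloor$ first, throw away the bad part by Proposition \ref{P:1/n2}, and carry out both the counting (your $\mathbb{B}$-versus-$\mathbb{A}$ step) and the count-to-length conversion inside each good $\varpi$. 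As written, your architecture --- global discard first, then a core inequality for $\mathbb{A}_{l}(\tau)$ --- has two unjustified reductions, and the missing ingredient in both is Proposition \ref{P:1/n2}.
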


\begin{proposition}\label{P:leq}
 Let $m\geq a_{N}$ and $\tau \in\Omega_{m}(x)$ with $\tau|_{a_{N}}=u$. For any $n\geq 4b_{\tau}+2c_{u}$ and $l\geq n+c_{u}$, we have
\begin{equation*}
    \mathcal{L}(F_{n}(\tau;\varphi)\cap F_{l}(\tau;\varphi))\leq \frac{K_{u}}{|I(\tau)|}\mathcal{L}(F_{n}(\tau;\varphi))\mathcal{L}(F_{l}(\tau;\varphi)),
\end{equation*}
where
\begin{equation*}
  K_{u}=\frac{1458}{x^{2}\lambda_{u}}\cdot\frac{(\overline{\beta}(u))^{5}}{\left(\underline{\beta}(u)-1\right)^{4}}>0.
\end{equation*}
\end{proposition}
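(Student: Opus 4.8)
The plan is to establish this quasi-independence estimate by a \emph{disjoint packing} argument that reduces the two-scale covariance $\mathcal{L}(F_n(\tau;\varphi)\cap F_l(\tau;\varphi))$ to the single-scale measure $\mathcal{L}(F_n(\tau;\varphi))$, and then to convert the leftover factor $\varphi(l)$ into $\mathcal{L}(F_l(\tau;\varphi))$ using the lower bound already supplied by Proposition \ref{P:geq}. The whole point of the construction of $\mathbb{B}_l(\tau)$ in \eqref{E:Bltau} is to make the packing step clean, and the hypothesis $l\geq n+c_u$ is exactly what activates that construction.

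First I would decompose the intersection according to which order-$n$ full cylinder contains each contributing piece:
\[
F_n(\tau;\varphi)\cap F_l(\tau;\varphi)=\bigcup_{w\in\mathbb{B}_n(\tau)}\ \bigcup_{\substack{v\in\mathbb{B}_l(\tau)\\ I(w;\varphi)\cap I(v;\varphi)\neq\varnothing}}I(v;\varphi).
\]
Since $I(v;\varphi)\subset I(v)$ and $I(w;\varphi)\subset I(w)$ with $|v|=l>n=|w|$, a nonempty intersection forces $I(v)\subset I(w)$, i.e. $v|_n=w$, so $v\in\mathbb{H}_l(w)$. Here $l\geq n+c_u$ gives $n\leq l-c_u$, so $w$ is among the orders at which the two boundary words were discarded when forming $\mathbb{B}_l(\tau)$; hence every contributing $v$ lies in $\mathbb{H}_l(w)\setminus\mathbb{H}^*_l(w)$ and therefore satisfies \eqref{f5}, namely $I(v)\subset I(w;\varphi)$. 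This is the crucial gain: the contributing full cylinders $I(v)$ of order $l$ are pairwise disjoint and all contained in the interval $I(w;\varphi)$, whence $\sum_{v}|I(v)|\leq|I(w;\varphi)|$.

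The second step is a per-cylinder comparison $|I(v;\varphi)|\leq C'_u\,\varphi(l)\,|I(v)|$ with $C'_u=6\overline{\beta}(u)^2\big/\big(x(\underline{\beta}(u)-1)^2\big)$. I would bound $|I(v;\varphi)|$ from above by Lemma \ref{L:length} and $|I(v)|$ from below by Proposition \ref{P:Cfulen} (recall $v\in\Lambda_l(x)$ is full), so that the ratio is controlled by $\tfrac{2\varphi(l)}{x}\,\overline{\beta}(v)^2(\underline{\beta}(v)-1)^{-2}\big(\overline{\beta}(v)/\underline{\beta}(v)\big)^{l-1}$; the last factor is $<3$ by Proposition \ref{P:ov/un<3}, and the prefix inequalities $\underline{\beta}(u)\leq\underline{\beta}(v)$, $\overline{\beta}(v)\leq\overline{\beta}(u)$ absorb the rest into $C'_u$. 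Summing over the contributing $v$ and invoking the packing bound from Step 1 gives, for each fixed $w\in\mathbb{B}_n(\tau)$,
\[
\sum_{v}|I(v;\varphi)|\leq C'_u\,\varphi(l)\sum_{v}|I(v)|\leq C'_u\,\varphi(l)\,|I(w;\varphi)|.
\]

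Finally I would sum over $w\in\mathbb{B}_n(\tau)$ and recognize $\sum_{w}|I(w;\varphi)|=\mathcal{L}(F_n(\tau;\varphi))$ (the $I(w;\varphi)$ being disjoint), obtaining $\mathcal{L}(F_n(\tau;\varphi)\cap F_l(\tau;\varphi))\leq C'_u\,\varphi(l)\,\mathcal{L}(F_n(\tau;\varphi))$. Substituting $\varphi(l)\leq\mathcal{L}(F_l(\tau;\varphi))\big/\big(D_u|I(\tau)|\big)$ from Proposition \ref{P:geq} then yields the claim with $K_u=C'_u/D_u$, which a direct computation checks equals the stated constant $\tfrac{1458}{x^2\lambda_u}\cdot\overline{\beta}(u)^5(\underline{\beta}(u)-1)^{-4}$. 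The main obstacle is Step 1: without removing the two boundary words in passing from $\mathbb{H}_l(w)$ to $\mathbb{H}^*_l(w)$, a contributing cylinder $I(v)$ could protrude past the endpoint of $I(w;\varphi)$, destroying the disjoint-packing inequality $\sum_v|I(v)|\leq|I(w;\varphi)|$; once that containment is secured, everything downstream is routine, resting only on the uniform comparability of $\underline{\beta}$ and $\overline{\beta}$ on cylinders with prefix $u$ granted by Proposition \ref{P:ov/un<3}.
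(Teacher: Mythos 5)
Your proposal is correct and follows essentially the same route as the paper's own proof: the same decomposition over $w\in\mathbb{B}_{n}(\tau)$ and $v\in\mathbb{B}_{l}(\tau)$, the same use of the discarded boundary words $\mathbb{H}^{\ast}_{l}(w)$ via (\ref{f5}) to pack the disjoint cylinders $I(v)$ inside $I(w;\varphi)$, the same per-cylinder comparison combining Lemma \ref{L:length}, Proposition \ref{P:Cfulen} and Proposition \ref{P:ov/un<3} to get the factor $D_{u}K_{u}=6(\overline{\beta}(u))^{2}/\bigl(x(\underline{\beta}(u)-1)^{2}\bigr)$, and the same final substitution of $\varphi(l)\leq\mathcal{L}(F_{l}(\tau;\varphi))/\bigl(D_{u}|I(\tau)|\bigr)$ from Proposition \ref{P:geq}. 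Your constant bookkeeping ($K_{u}=C'_{u}/D_{u}$, with $6\cdot 243=1458$) matches the paper exactly, and you correctly note that $l\geq n+c_{u}$ is precisely what ensures the order-$n$ boundary words were removed in forming $\mathbb{B}_{l}(\tau)$.
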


\begin{proof}[Proof of Proposition \ref{P:geq}]
Since $\mathbb{B}_{l}(\tau)\subset \mathbb{A}_{l}(\tau)\subset \mathbb{A}_{l}(u)$ (see (\ref{E:Bltau})), by Proposition \ref{P:Iwp/Iw}, we obtain
\begin{equation}\label{E:LBlu}
    \mathcal{L}(F_{l}(\tau;\varphi))=\sum_{v\in \mathbb{B}_{l}(\tau)}|I(v;\varphi)|\geq \frac{1}{2}\varphi(l)\sum_{v\in \mathbb{B}_{l}(\tau)}|I(v)|.
\end{equation}

Choose $q=\lfloor l/4\rfloor$, so $q\geq b_{\tau}\geq |\tau|=m$. It is trivial that
\begin{equation*}\label{E:Itau=IoIo}
    I(\tau)=\bigcup_{\varpi\in \Omega_{q}(x), \varpi|_{m}=\tau}I(\varpi)=\bigcup_{\varpi\in \mathbb{P}_{q}(\tau)}I(\varpi) \cup \bigcup_{\varpi\in \widehat{\mathbb{P}}_{q}(\tau)}I(\varpi),
\end{equation*}
where all of the unions are disjoint. Then by Proposition \ref{P:1/n2}, it follows that
\begin{equation*}
    |I(\tau)|=\sum_{\varpi\in \mathbb{P}_{q}(\tau)}|I(\varpi)|+\sum_{\varpi\in \widehat{\mathbb{P}}_{q}(\tau)}|I(\varpi)|\leq \sum_{\varpi\in \mathbb{P}_{q}(\tau)}|I(\varpi)|+\frac{1}{q^{2}}|I(\tau)|.
\end{equation*}
Since $q\geq m\geq a_{N}\geq 2$, it follows that
\begin{equation}\label{E:Itau<}
    \sum_{\varpi\in \mathbb{P}_{q}(\tau)}|I(\varpi)|\geq \left(1-\frac{1}{q^{2}}\right)|I(\tau)|\geq \frac{2}{3}|I(\tau)|.
\end{equation}

On the other hand, note that
\begin{align*}
    \bigcup_{v\in \mathbb{B}_{l}(\tau)}I(v)&=I(\tau)\cap \bigcup_{v\in \mathbb{B}_{l}(\tau)}I(v)\supset \bigcup_{\varpi\in \mathbb{P}_{q}(\tau)}I(\varpi)\cap
    \bigcup_{v\in \mathbb{B}_{l}(\tau)}I(v)\\
    &\supset\bigcup_{\varpi\in \mathbb{P}_{q}(\tau)}\bigcup_{v\in \mathbb{B}_{l}(\tau)\cap \mathbb{A}_{l}(\varpi)}(I(\varpi)\cap I(v))\\
    &=\bigcup_{\varpi\in \mathbb{P}_{q}(\tau)}\bigcup_{v\in \mathbb{B}_{l}(\tau)\cap \mathbb{A}_{l}(\varpi)}I(v),
\end{align*}
where all of the unions are disjoint. Thus,
\begin{equation}\label{E:sumIv}
    \sum_{v\in \mathbb{B}_{l}(\tau)}|I(v)|\geq \sum_{\varpi\in \mathbb{P}_{q}(\tau)}\sum_{v\in \mathbb{B}_{l}(\tau)\cap \mathbb{A}_{l}(\varpi)}|I(v)|.
\end{equation}
Next, we hope to show that
\begin{equation*}
 \sum_{v\in \mathbb{B}_{l}(\tau)\cap \mathbb{A}_{l}(\varpi)}|I(v)|\geq 3D_{u}|I(\varpi)|\quad\text{ for all } \varpi\in \mathbb{P}_{q}(\tau),
\end{equation*}
which together with $(\ref{E:LBlu})$, $(\ref{E:sumIv})$ and $(\ref{E:Itau<})$ one by one implies
\begin{align*}
  \mathcal{L}(F_{l}(\tau;\varphi))&\geq \frac{1}{2}\varphi(l)\sum_{v\in \mathbb{B}_{l}(\tau)}|I(v)|\ge \frac{1}{2}\varphi(l)\sum_{\varpi\in \mathbb{P}_{q}(\tau)}\sum_{v\in \mathbb{B}_{l}(\tau)\cap \mathbb{A}_{l}(\varpi)}|I(v)|\\
  &\ge \frac{3}{2}\cdot D_{u}\varphi(l)\sum_{\varpi\in \mathbb{P}_{q}(\tau)}|I(\varpi)|\ge
  D_{u}\varphi(l)|I(\tau)|.
\end{align*}

Fix an arbitrary $\varpi\in \mathbb{P}_{q}(\tau)$. The proof is divided into 4 steps.

\textbf{Step 1.}  We give a lower bound of $\#\mathbb{A}_{l}(\varpi)$. Let $$k_{0}=\min\{k\geq 0\colon \varpi 0^{k}1\in \Omega_{q+k+1}(x)\}.$$ By the definition of $\mathbb{P}_{q}(\tau)$, one has $0\leq k_{0}\leq q-1$. In the same way as in the proof of Lemma \ref{L:ML1}, we have
$$I(\varpi)=I(\varpi0^{k_{0}})\  {\text{and}}\ \varpi0^{k_{0}+1}\in \Lambda_{q+k_{0}+1}(x).$$
So by Lemma \ref{L:CcC}, for any $\varsigma\in \Xi_{l-q-k_{0}-1}\left(\underline{\beta}(\varpi 0^{k_{0}+1})\right)$,
 one has $\varpi 0^{k_{0}+1}\varsigma\in \Lambda_{l}(x)$. Thus
\begin{equation*}\label{E:AlXi}
    \#\mathbb{A}_{l}(\varpi)\geq \#\Xi_{l-q-k_{0}-1}\left(\underline{\beta}(\varpi 0^{k_{0}+1})\right).
\end{equation*}
At the same time
\begin{equation}\label{E:l-q-k}
    l-q-k_{0}-1\geq l-2q\geq l/2\geq c_{u},
\end{equation}
then by $(\ref{E:Xi>Si})$ it follows $$\#\Xi_{l-q-k_{0}-1}\left(\underline{\beta}(\varpi 0^{k_{0}+1})\right)\geq \lambda_{u}\#\Sigma_{l-q-k_{0}-1}\left(\underline{\beta}(\varpi 0^{k_{0}+1})\right).$$ Therefore, by
Lemma \ref{L:number}, it follows that
\begin{align}\label{E:numA}
    \#\mathbb{A}_{l}(\varpi)&\geq \lambda_{u}\#\Sigma_{l-q-k_{0}-1}\left(\underline{\beta}(\varpi 0^{k_{0}+1})\right)\\
    &\geq \lambda_{u}\left(\underline{\beta}(\varpi 0^{k_{0}+1})\right)^{l-q-k_{0}-1}\geq \lambda_{u}(\underline{\beta}(\varpi))^{l-q-k_{0}-1}.\nonumber
\end{align}

\textbf{Step 2.} We compare $\#\mathbb{A}_{n}(\varpi)$ with $\#\mathbb{A}_{l}(\varpi)$ for every $q\leq n\leq l-c_{u}$. Fix $q\leq n\leq l-c_{u}$. Note that for each $w\in \mathbb{A}_{n}(\varpi)$, $I(w)$ is a full cylinder, i.e., $w\in \Lambda_{n}(x)$. So by Lemma \ref{L:CcC}, $w\varsigma\in \mathbb{A}_{l}(\varpi)$ for all
$\varsigma\in\Xi_{l-n}(\underline{\beta}(w))$. This implies each $w\in \mathbb{A}_{n}(\varpi)$
can contribute at least $\#\Xi_{l-n}(\underline{\beta}(w))$ elements to $\mathbb{A}_{l}(\varpi)$.
Moreover, since $l-n\geq c_{u}$, by $(\ref{E:Xi>Si})$ and Lemma \ref{L:number}, it follows that
$$\#\Xi_{l-n}(\underline{\beta}(w))\geq \lambda_{u}\#\Sigma_{l-n}(\underline{\beta}(w))\geq \lambda_{u}(\underline{\beta}(w))^{l-n}\geq \lambda_{u}(\underline{\beta}(u))^{l-n}.$$
Therefore, we have
\begin{equation}\label{E:A<A}
\lambda_{u}(\underline{\beta}(u))^{l-n}\cdot\#\mathbb{A}_{n}(\varpi)\leq \#\mathbb{A}_{l}(\varpi).
\end{equation}

\textbf{Step 3.} We give a lower bound of $\#(\mathbb{B}_{l}(\tau)\cap \mathbb{A}_{l}(\varpi))$.
Observe that $\mathbb{A}_{l}(\varpi)\subset \mathbb{A}_{l}(\tau)$, since $\varpi|_m=\tau$.
Recalling $(\ref{E:Bltau})$, it follows
\begin{align*}
    \mathbb{B}_{l}(\tau)\cap \mathbb{A}_{l}(\varpi)&=\left(\mathbb{A}_{l}(\tau)\backslash \bigcup_{m\leq n\leq l-c_{u}}\bigcup_{w\in \mathbb{A}_{n}(\tau)}\mathbb{H}^{\ast}_{l}(w)\right)\cap\mathbb{A}_{l}(\varpi)\\
&=\mathbb{A}_{l}(\varpi)\backslash \bigcup_{m\leq n\leq l-c_{u}}\bigcup_{w\in \mathbb{A}_{n}(\tau)}\mathbb{H}^{\ast}_{l}(w)\\
&=\mathbb{A}_{l}(\varpi)\cap \left(\bigcup_{m\leq n<q}\bigcup_{w\in \mathbb{A}_{n}(\tau)}\mathbb{H}^{\ast}_{l}(w)\right)^{c}\cap \left(\bigcup_{q\leq n\leq l-c_{u}}\bigcup_{w\in \mathbb{A}_{n}(\tau)}\mathbb{H}^{\ast}_{l}(w)\right)^{c},
\end{align*}
where $(\cdot)^{c}$ denotes the complement.

Note that both $\varpi$ and $w$ are the common prefixes of the words in $\mathbb{A}_{l}(\varpi)\cap\mathbb{H}^{\ast}_{l}(w)$.
So if  $\mathbb{A}_{l}(\varpi)\cap\mathbb{H}^{\ast}_{l}(w)\neq \varnothing$, one must have $w=\varpi|_{n}$ if $n<q$, and $\varpi=w|_{q}$ if $n\geq q$. Thus
\begin{equation*}
    \mathbb{B}_{l}(\tau)\cap \mathbb{A}_{l}(\varpi)=\mathbb{A}_{l}(\varpi)\cap \left(\bigcup_{m\leq n<q}\bigcup_{\substack{w\in \mathbb{A}_{n}(\tau),\\w=\varpi|_{n}}}\mathbb{H}^{\ast}_{l}(w)\right)^{c}\cap \left(\bigcup_{q\leq n\leq l-c_{u}}\bigcup_{\substack{w\in \mathbb{A}_{n}(\tau),\\w|_{q}=\varpi}}\mathbb{H}^{\ast}_{l}(w)\right)^{c}.
\end{equation*}
Note that for any $n\geq q$ and $w\in \mathbb{A}_{n}(\tau)$ with $w|_{q}=\varpi$, we also have $w\in \mathbb{A}_{n}(\varpi)$. Since $\#\mathbb{H}^{\ast}_{l}(w)\leq 2$, one has
\begin{equation*}
    \#(\mathbb{B}_{l}(\tau)\cap \mathbb{A}_{l}(\varpi))\geq \#\mathbb{A}_{l}(\varpi)-2(q-m)-2\sum_{q\leq n\leq l-c_{u}}\#\mathbb{A}_{n}(\varpi).
\end{equation*}
Thus, by $(\ref{E:A<A})$ and at last by $(\ref{E:Lu})$, it follows that
\begin{align}\label{E:BlAl}
    \#(\mathbb{B}_{l}(\tau)\cap \mathbb{A}_{l}(\varpi))&\geq \#\mathbb{A}_{l}(\varpi)-2q-2\lambda^{-1}_{u}\#\mathbb{A}_{l}(\varpi)\sum_{q\leq n\leq l-c_{u}}(\underline{\beta}(u))^{n-l}\\\nonumber
    &\geq \#\mathbb{A}_{l}(\varpi)-\frac{l}{2}-\frac{2\#\mathbb{A}_{l}(\varpi)}{\lambda_{u}(\underline{\beta}(u))^{c_{u}-1}(\underline{\beta}(u)-1)}\\\nonumber
    &\geq \frac{2}{3}\#\mathbb{A}_{l}(\varpi)-\frac{l}{2}.
\end{align}
On the other hand, by $(\ref{E:Lu})$, $(\ref{E:l-q-k})$ and $(\ref{E:numA})$, we have
\begin{equation*}
    \#\mathbb{A}_{l}(\varpi)\geq\lambda_{u}(\underline{\beta}(\varpi))^{l-q-k_{0}-1}\geq \lambda_{u}(\underline{\beta}(\varpi))^{l/2}\geq \lambda_{u}(\underline{\beta}(u))^{l/2}\geq 3l/2.
\end{equation*}
Thus it follows that
\begin{equation}\label{E:BlAl/3}
    \#(\mathbb{B}_{l}(\tau)\cap \mathbb{A}_{l}(\varpi))\geq \frac{1}{3}\#\mathbb{A}_{l}(\varpi)\geq \frac{1}{3}\lambda_{u}(\underline{\beta}(\varpi))^{l-q-k_{0}-1}.
\end{equation}

\textbf{Step 4.} We show
\begin{equation*}
   \sum_{v\in \mathbb{B}_{l}(\tau)\cap \mathbb{A}_{l}(\varpi)}|I(v)|\geq 3D_{u}|I(\varpi)|.
\end{equation*}
By Proposition \ref{P:Cfulen} and $(\ref{E:BlAl/3})$, we have
\begin{align*}
    \sum_{v\in \mathbb{B}_{l}(\tau)\cap \mathbb{A}_{l}(\varpi)}|I(v)|&\geq \sum_{v\in \mathbb{B}_{l}(\tau)\cap \mathbb{A}_{l}(\varpi)} (\underline{\beta}(v)-1)^{2}(\overline{\beta}(v))^{-1-l}\\
&\geq \sum_{v\in \mathbb{B}_{l}(\tau)\cap \mathbb{A}_{l}(\varpi)} (\underline{\beta}(\varpi)-1)^{2}(\overline{\beta}(\varpi))^{-1-l}\\
&\geq \frac{1}{3}\lambda_{u}(\underline{\beta}(\varpi))^{l-q-k_{0}-1}(\underline{\beta}(\varpi)-1)^{2}(\overline{\beta}(\varpi))^{-1-l}\\
&=\frac{\lambda_{u}}{3}\cdot\frac{(\underline{\beta}(\varpi)-1)^{2}}{\underline{\beta}(\varpi) (\overline{\beta}(\varpi))^{2}}\cdot\left(\frac{\underline{\beta}(\varpi)}{\overline{\beta}(\varpi)}\right)^{l-q-k_{0}}\cdot(\overline{\beta}(\varpi))^{1-q-k_{0}}\\
&\geq \frac{\lambda_{u}}{3} \cdot \frac{(\underline{\beta}(u)-1)^{2}}{(\overline{\beta}(u))^{3}}\cdot
\left(\frac{\underline{\beta}(\varpi)}{\overline{\beta}(\varpi)}\right)^{3q+3}\cdot(\overline{\beta}(\varpi))^{1-q-k_{0}}.
\end{align*}
Recall that $I(\varpi)=I(\varpi 0^{k_{0}})$, which implies $\overline{\beta}(\varpi)=\overline{\beta}(\varpi 0^{k_{0}})$. Therefore, by Lemma \ref{L:I(w)} on the length of a cylinder and Proposition \ref{P:ov/un<3}, it follows that
\begin{align*}
    \sum_{v\in \mathbb{B}_{l}(\tau)\cap \mathbb{A}_{l}(\varpi)}|I(v)|&\geq \frac{x\lambda_{u}}{81}\cdot \frac{(\underline{\beta}(u)-1)^{2}}{(\overline{\beta}(u))^{3}}|I(\varpi 0^{k_{0}})|\\
&=\frac{x\lambda_{u}}{81}\cdot \frac{(\underline{\beta}(u)-1)^{2}}{(\overline{\beta}(u))^{3}}|I(\varpi)|=3D_{u}|I(\varpi)|.
\end{align*}
\end{proof}

\begin{proof}[Proof of Proposition \ref{P:leq}]
By $(\ref{E:Fl})$ on the definition of $F_n(\tau;\varphi)$, we have
\begin{equation*}
    F_{n}(\tau;\varphi)\cap F_{l}(\tau;\varphi)=\bigcup_{w\in \mathbb{B}_{n}(\tau)}(I(w;\varphi)\cap F_{l}(\tau;\varphi)),
\end{equation*}
where the union is disjoint. So
\begin{equation}\label{E:FF}
    \mathcal{L}(F_{n}(\tau;\varphi)\cap F_{l}(\tau;\varphi))=\sum_{w\in \mathbb{B}_{n}(\tau)}\mathcal{L}(I(w;\varphi)\cap F_{l}(\tau;\varphi)).
\end{equation}
We will prove that for any $w\in \mathbb{B}_{n}(\tau)$, $$\mathcal{L}(I(w;\varphi)\cap F_{l}(\tau;\varphi))\leq D_{u}K_{u}\varphi(l)|I(w;\varphi)|$$ which together with $(\ref{E:FF})$ and Proposition \ref{P:geq} implies
\begin{align*}
     \mathcal{L}(F_{n}(\tau;\varphi)\cap F_{l}(\tau;\varphi))&
     \le D_{u}K_{u}\varphi(l)\sum_{w\in \mathbb{B}_{n}(\tau)}|I(w;\varphi)|=D_{u}K_{u}\varphi(l)\mathcal{L}(F_{n}(\tau;\varphi))\\
     &\leq \frac{K_{u}}{|I(\tau)|}\mathcal{L}(F_{n}(\tau;\varphi))\mathcal{L}(F_{l}(\tau;\varphi)).
\end{align*}

Fix an arbitrary $w\in \mathbb{B}_{n}(\tau)$. Note that
\begin{equation*}
    I(w;\varphi)\cap F_{l}(\tau;\varphi)
=\bigcup_{v\in \mathbb{B}_{l}(\tau)}(I(w;\varphi)\cap I(v;\varphi))=\bigcup_{v\in \mathbb{D}_{l}(w)}(I(w;\varphi)\cap I(v;\varphi)),
\end{equation*}
where we set
\begin{equation*}
    \mathbb{D}_{l}(w)=\{v\in \mathbb{B}_{l}(\tau)\colon I(w;\varphi)\cap I(v;\varphi)\neq \varnothing\}.
\end{equation*}
Then,
\begin{equation}\label{E:IwF}
    \mathcal{L}(I(w;\varphi)\cap F_{l}(\tau;\varphi))
=\sum_{v\in\mathbb{D}_{l}(w)}|I(w;\varphi)\cap I(v;\varphi)|.
\end{equation}

By the definition of $\mathbb{B}_{l}(\tau)$ (see $(\ref{E:Bltau})$), it is easy to see that $\mathbb{D}_{l}(w)\subset \mathbb{H}_{l}(w)\backslash \mathbb{H}^{\ast}_{l}(w)$.
So by the design of $\mathbb{H}^{\ast}_{l}(w)$ and (\ref{f5}), one has
\begin{equation*}\label{E:|IwIv|}
    |I(w;\varphi)|\geq \sum_{v\in \mathbb{D}_{l}(w)}|I(v)|.
\end{equation*}

On the other hand, for $v\in \mathbb{D}_{l}(w)$, recall the lengths of $I(v;\varphi)$ (Lemma \ref{L:length}) and $I(v)$ (Proposition \ref{P:Cfulen}). Then by Proposition \ref{P:ov/un<3}, we have
\begin{align}\label{E:IwIv}
    |I(w;\varphi)\cap I(v;\varphi)|&= |I(v;\varphi)|\leq 2x^{-1}\varphi(l)(\underline{\beta}(v))^{1-l}\\\nonumber
    &=\frac{2\varphi(l)(\underline{\beta}(v))^{2}}{x\left(\underline{\beta}(v)-1\right)^{2}}\cdot
    \left(\frac{\overline{\beta}(v)}{\underline{\beta}(v)}\right)^{l+1}\cdot\left(\underline{\beta}(v)-1\right)^{2}(\overline{\beta}(v))^{-1-l}\\\nonumber
    &\leq \frac{6\varphi(l)(\underline{\beta}(v))^{2}}{x\left(\underline{\beta}(v)-1\right)^{2}}|I(v)|\leq \frac{6\varphi(l)(\overline{\beta}(u))^{2}}{x\left(\underline{\beta}(u)-1\right)^{2}}|I(v)|\\&=D_{u}K_{u}\varphi(l)|I(v)|.\nonumber
\end{align}
Hence, by $(\ref{E:IwF})$--$(\ref{E:IwIv})$, we have
\begin{equation*}
    \frac{\mathcal{L}(I(w;\varphi)\cap F_{l}(\tau;\varphi))}{|I(w;\varphi)|}\leq \frac{\sum_{v\in \mathbb{D}_{l}(w)}|I(w;\varphi)\cap I(v;\varphi)|}{\sum_{v\in \mathbb{D}_{l}(w)}|I(v)|}\leq D_{u}K_{u}\varphi(l).
\end{equation*}
Therefore, $\mathcal{L}(I(w;\varphi)\cap F_{l}(\tau;\varphi))\leq D_{u}K_{u}\varphi(l)|I(w;\varphi)|$.
\end{proof}

\subsection{Proofs of the main results}
Recall that $N\in\mathbb{N}$ and $u\in \mathbb{U}_{a_{N}}$ are fixed.
\begin{proof}[Proof of Lemma \ref{L:ML2}]
Fix $m\geq a_{N}$ and $\tau \in\Omega_{m}(x)$ with $\tau|_{a_{N}}=u$.
Having Proposition \ref{P:geq} and Proposition \ref{P:leq} in hand, we can then apply the Chung-Erd\"{o}s inequality to the limsup set $$
\limsup_{l\to \infty} F_{l}(\tau;\varphi).
$$ More precisely, Proposition \ref{P:geq} ensures that $$
\sum_{l=4b_{\tau}+2c_{u}}^{\infty}\mathcal{L}(F_{l}(\tau;\varphi))=\infty
$$ so the condition of Lemma \ref{L:PZI} is met. One the other hand, by Proposition \ref{P:leq}\begin{align*}
&\sum_{4b_{\tau}+2c_{u}\le n<l\le N}\mathcal{L}(F_{l}(\tau;\varphi)\cap F_{n}(\tau;\varphi))\\
=&\left(\sum_{n=4b_{\tau}+2c_{u}}^N\sum_{l= n+c_u}^N+\sum_{n=4b_{\tau}+2c_{u}}^N\sum_{l= n+1}^{n+c_u-1}\right)\mathcal{L}(F_{l}(\tau;\varphi)\cap F_{n}(\tau;\varphi))\\
\le& \frac{1}{2}\cdot \frac{K_{u}}{|I(u)|}\left(\sum_{n=4b_{\tau}+2c_{u}}^N\mathcal{L}(F_{n}(\tau;\varphi))\right)^2+
c_u\sum_{n=4b_{\tau}+2c_{u}}^N\mathcal{L}(F_{n}(\tau;\varphi)).
\end{align*}
Thus by Chung-Erd\"{o}s inequality one has
\begin{equation*}
    \mathcal{L}(E_{x}(\{x_{n}\},\varphi)\cap I(\tau))\geq \mathcal{L}(\limsup_{l\to\infty}F_l(\tau; \varphi))\ge \rho_{u}|I(\tau)|,
\end{equation*}
where the constant $\rho_{u}=1/K_{u}>0$ only depends on $x$ and $u$.
\end{proof}

\begin{proof}[Proof of Theorem \ref{T:main}: the divergent part]
 Let $N\in\mathbb{N}$ and $u\in \mathbb{U}_{a_{N}}$.
Let $\mathcal{C}$ be the collection of all cylinders in the parameter space $\{\beta\in \mathbb{R}\colon\beta>1\}$ contained in $I(u)$.
Then, in light of Lemma \ref{L:ML2}, the collection $\mathcal{C}$ satisfies the conditions in Knopp's lemma.
Thus, we obtain $$\mathcal{L}(E_{x}(\{x_{n}\}, \varphi)\cap I(u))=|I(u)|.$$ Therefore,
the set $E_{x}(\{x_{n}\}, \varphi)$ is of full Lebesgue measure in $(1,+\infty)$.
\end{proof}

\section{Proofs of Corollary \ref{T:ln} and Theorem \ref{T:rec}}
In this section, we will give the proofs of Corollary \ref{T:ln} and Theorem \ref{T:rec}.

\subsection{Diophantine analysis in parameter space}
Recall
\begin{equation*}
    \mathcal{E}_{x}(\{x_{n}\},\{l_{n}\})=\{\beta>1\colon |T^{n}_{\beta}x-x_{n}|<\beta^{-l_{n}} \text{ for infinitely many } n\in\mathbb{N}\},
\end{equation*}
and\begin{equation*}
    \beta^{\star}=\inf\left\{\beta>1\colon \sum\beta^{-l_{n}}<+\infty\right\}=\sup\left\{\beta>1\colon \sum\beta^{-l_{n}}=+\infty\right\},
\end{equation*}
where $\inf\varnothing=+\infty$ and $\sup\varnothing=1$ for the empty set $\varnothing$.

\begin{proof}[Proof of Corollary \ref{T:ln}]
We shall prove that \begin{itemize}\item
for any subinterval $[s,t]\subset (1,\beta^{\star})$,  $$\mathcal{L}\left(\mathcal{E}_{x}(\{x_{n}\},\{l_{n}\})\cap [s,t]\right)=t-s;$$

  \item for any subinterval $[s,t]\subset (\beta^{\star},+\infty)$,
  $$\mathcal{L}\left(\mathcal{E}_{x}(\{x_{n}\},\{l_{n}\})\cap [s,t]\right)=0.$$
  \end{itemize}
      This enables us to conclude that $$\mathcal{L}(\mathcal{E}_{x}(\{x_{n}\},\{l_{n}\}))=\beta^{\star}-1$$ no matter what $\beta^{\star}\in [1,\infty]$ is.

  Note  that for any subinterval $[s,t]\subset (1,\beta^{\star})$,
  \begin{align*}
    \mathcal{E}_{x}(\{x_{n}\},\{l_{n}\})\cap [s,t]&=\{\beta\in[s,t]\colon |T^{n}_{\beta}x-x_{n}|<\beta^{-l_{n}} \text{ for infinitely many } n\in\mathbb{N}\}\\
    &\supset \{\beta\in[s,t]\colon |T^{n}_{\beta}x-x_{n}|<t^{-l_{n}} \text{ for infinitely many } n\in\mathbb{N}\}\\
    &=E_{x}(\{x_{n}\},\varphi)\cap [s,t],
  \end{align*}
where $\varphi(n)=t^{-l_{n}}$ for all $n\in \mathbb{N}$. Since $1<t<\beta^{\star}$, we know that $$\sum_{n=1}^{\infty}\varphi(n)=\sum_{n=1}^{\infty} t^{-l_{n}}=+\infty.$$ Then by Theorem \ref{T:main}, it follows that
\begin{equation*}
    \mathcal{L}(\mathcal{E}_{x}(\{x_{n}\},\{l_{n}\})\cap [s,t])=\mathcal{L}(E_{x}(\{x_{n}\},\varphi)\cap [s,t])=t-s.
\end{equation*}

Similarly, for any subinterval $[s,t]\subset (\beta^{\star},+\infty)$, we have
  \begin{align*}
    \mathcal{E}_{x}(\{x_{n}\},\{l_{n}\})\cap [s,t]&
    \subset \{\beta\in[s,t]\colon |T^{n}_{\beta}x-x_{n}|<s^{-l_{n}} \text{ for infinitely many } n\in\mathbb{N}\}\\
    &\subset E_{x}(\{x_{n}\},\widetilde{\varphi}),
  \end{align*}
where $\widetilde{\varphi}(n)=s^{-l_{n}}$ for all $n\in \mathbb{N}$. Since $s>\beta^{\star}\geq 1$, we know that $\sum\widetilde{\varphi}(n)<+\infty$. Then Theorem \ref{T:main} gives that
\begin{equation*}
    \mathcal{L}(\mathcal{E}_{x}(\{x_{n}\},\{l_{n}\})\cap [s,t])=\mathcal{L}(E_{x}(\{x_{n}\},\widetilde{\varphi}))=0.
\end{equation*}
\end{proof}

\subsection{Quantitative recurrence in beta-expansion}
Let $\varphi\colon \mathbb{N}\rightarrow (0,1]$ be a positive function. For any $\beta>1$, let
\begin{equation*}
    \mathfrak{R}_{\beta}(\varphi)=\{x\in[0,1)\colon |T^{n}_{\beta}x-x|<\varphi(n) \textrm{ for infinitely many } n\in\mathbb{N}\}.
\end{equation*}
Applying Boshernitzan's outstanding results about the quantitative recurrence problem in a measure dynamical system \cite{Bo} to the beta-expansion $([0,1],T_{\beta})$, one knows that for $\mathcal{L}$-almost every $x\in [0,1)$,
  \begin{equation*}
    \liminf_{n\rightarrow\infty}n|T^{n}_{\beta}x-x|<+\infty.
  \end{equation*}
Recently, Hussain, Li, Simmons and Wang \cite{HuLiSiWa} showed that $$
\mathcal{L}(\mathfrak{R}_{\beta}(\varphi))=0, \ {\text{or}}\ 1 \Longleftrightarrow \sum_{n=1}^{\infty}\varphi(n)<\infty,\ {\text{or}}\ =\infty,
$$
where the exponentially mixing property of the beta-expansion is essential to their argument.

With the same idea used in the proof of Theorem \ref{T:main}, we can go a little further and we do not need the exponentially mixing property.

Let $L\colon [0,1]\rightarrow[0,1]$ be a Lipschitz function and let
\begin{equation*}
    \mathfrak{R}_{\beta}(L,\varphi)=\{x\in[0,1)\colon |T^{n}_{\beta}x-L(x)|<\varphi(n) \textrm{ for infinitely many } n\in\mathbb{N}\}.
\end{equation*}
Rewrite $\mathfrak{R}_{\beta}(L,\varphi)$ to express its limsup nature:
\begin{equation*}
    \mathfrak{R}_{\beta}(L,\varphi)=\bigcap^{\infty}_{m=1}\bigcup^{\infty}_{n=m}\bigcup_{w\in \Sigma_n(\beta)}\mathfrak{I}(w;L,\varphi),
\end{equation*}
where $$
\mathfrak{I}(w;L,\varphi)=\Big\{x\in \mathfrak{I}(w): |T^nx-L(x)|<\varphi(n)\Big\}.
$$
Let $\kappa>0$ be a Lipschitz constant of $L(x)$, i.e., for any $x,y\in[0,1]$, we have $|L(x)-L(y)|\leq \kappa|x-y|$.
In analogy with Lemma \ref{L:length} and Proposition \ref{P:Iwp/Iw}, we have
\begin{lemma}
  Let $\beta>1$. For any $n\in \mathbb{N}$ with $n>\log_{\beta} (3\kappa)$ and $w\in \Sigma_{n}(\beta)$, the set $\mathfrak{I}(w;L,\varphi)$
  is an interval and
\begin{align*}
\left\{
                             \begin{array}{ll}
                               |\mathfrak{I}(w;L,\varphi)|\leq 3\varphi(n)/\beta^{n}, & \hbox{for $w\in \Sigma_n(\beta)$;} \\
                               |\mathfrak{I}(w;L,\varphi)|\ge \frac{1}{4}\varphi(n)/\beta^{n}, & \hbox{for $w\in \Xi_n(\beta)$.}
                             \end{array}
                           \right.
\end{align*}
\end{lemma}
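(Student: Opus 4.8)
The plan is to exploit that on a single cylinder $\mathfrak{I}(w)=[a,b)$ of order $n$ the iterate $T^n_\beta$ is the affine map $x\mapsto\beta^n(x-a)$, so the auxiliary function $g(x):=T^n_\beta x-L(x)$ compares a steep affine map against a mildly Lipschitz perturbation, exactly in the spirit of $f_w$ and $(\ref{E:f'(beta)})$ in Lemma \ref{L:length} and Proposition \ref{P:Iwp/Iw}. First I would record the bi-Lipschitz estimate: for $a\le x<y<b$ the bound $|L(y)-L(x)|\le\kappa(y-x)$ gives $(\beta^n-\kappa)(y-x)\le g(y)-g(x)\le(\beta^n+\kappa)(y-x)$. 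Since $n>\log_\beta(3\kappa)$ forces $\beta^n>3\kappa$, the lower slope $\beta^n-\kappa>2\kappa>0$ is positive, so $g$ is continuous and strictly increasing on $\mathfrak{I}(w)$. Hence $\mathfrak{I}(w;L,\varphi)=\{x\in\mathfrak{I}(w):|g(x)|<\varphi(n)\}=g^{-1}\big((-\varphi(n),\varphi(n))\big)$ is an interval, which settles the first assertion. Note that this monotonicity argument uses only values of $L$, never its (a.e.\ defined) derivative.

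For the upper bound, valid for every $w\in\Sigma_n(\beta)$, I would pick two points of $\mathfrak{I}(w;L,\varphi)$; their $g$-values lie in $(-\varphi(n),\varphi(n))$, hence differ by less than $2\varphi(n)$, and the lower-slope estimate yields $(\beta^n-\kappa)|x-y|<2\varphi(n)$. Using $\kappa<\beta^n/3$, so that $\beta^n-\kappa>\tfrac23\beta^n$, this gives $|x-y|<3\varphi(n)/\beta^n$, i.e.\ $|\mathfrak{I}(w;L,\varphi)|\le 3\varphi(n)/\beta^n$.

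For the lower bound I would invoke fullness. When $w\in\Xi_n(\beta)$ the cylinder has length $\beta^{-n}$, so $T^n_\beta$ maps $\mathfrak{I}(w)=[a,b)$ onto $[0,1)$ and the range of $g$ is the interval $R:=[-L(a),1-L(b))$, which contains $0$ in its closure since $L(a)\ge 0$ and $L(b)\le 1$. The quantitative heart is that $R$ meets the window $(-\varphi(n),\varphi(n))$ in length at least $\tfrac23\varphi(n)$: writing $p=L(a)\ge0$ and $q=1-L(b)\ge0$ one computes $|R\cap(-\varphi(n),\varphi(n))|=\min(p,\varphi(n))+\min(q,\varphi(n))$, which is $\ge\varphi(n)$ unless both $p,q<\varphi(n)$, in which case it equals $p+q=1-(L(b)-L(a))>\tfrac23$ by the Lipschitz bound $|L(b)-L(a)|\le\kappa\beta^{-n}<\tfrac13$ together with $\varphi(n)\le1$. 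Since $g$ is increasing and $(\beta^n+\kappa)$-Lipschitz, its inverse shrinks lengths by at most $\beta^n+\kappa<\tfrac43\beta^n$, so $|\mathfrak{I}(w;L,\varphi)|\ge\tfrac23\varphi(n)/(\tfrac43\beta^n)=\tfrac12\varphi(n)/\beta^n$, comfortably better than the claimed $\tfrac14\varphi(n)/\beta^n$.

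The one delicate step is this last range computation; everything else transcribes the parameter-space estimates directly into the fixed-$\beta$ setting. The obstacle it addresses is that $L(a)$ or $L(b)$ could sit at an extreme of $[0,1]$, pushing nearly all of $R$ outside $(-\varphi(n),\varphi(n))$. The Lipschitz continuity of $L$ combined with $\beta^n>3\kappa$ is precisely what forbids both endpoints from being extreme at once and guarantees that a fixed proportion of the target window is covered.
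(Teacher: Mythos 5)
Your proof is correct and follows essentially the same route as the paper: the same auxiliary function $T^n_\beta x - L(x)$ with slope pinched between $\tfrac23\beta^n$ and $\tfrac43\beta^n$, and for full cylinders the same computation of how the range $[-L(a),\,1-L(b))$ meets the window $(-\varphi(n),\varphi(n))$. Your case analysis via $\min(p,\varphi(n))+\min(q,\varphi(n))$ is in fact slightly sharper than the paper's bound $\min\{1/3,\varphi(n)\}\ge \varphi(n)/3$, yielding $\tfrac12\varphi(n)/\beta^n$ in place of the stated $\tfrac14\varphi(n)/\beta^n$, but this is a refinement of constants rather than a different argument.
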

\begin{proof} For any $w\in \Sigma_n(\beta)$, we define
  \begin{equation*}
    \mathfrak{f}_{w}(x)=\beta^{n}\left(x-\sum^{n}_{i=1}\frac{w_{i}}{\beta^{i}}\right)-L(x)=T^n_{\beta}(x)-L(x), \ x\in \mathfrak{I}(w).
  \end{equation*}
It is easy to see that for $n>\log_{\beta} (3\kappa)$, we have
$$
\frac{2\beta^n}{3}<\frac{\mathfrak{f}_{w}(x)-\mathfrak{f}_{w}(y)}{x-y}<\frac{4\beta^n}{3} \ {\text{for}}\ x,y\in \mathfrak{I}(w),
$$
and thus $\mathfrak{f}_{w}(x)$ is continuous and strictly increasing. So, $\mathfrak{I}(w;L,\varphi)$ is an interval.

For any $s,t\in \mathfrak{I}(w;L,\varphi)$ with $s< t$,
$$\frac{2\beta^n}{3}<\frac{\mathfrak{f}_{w}(t)-\mathfrak{f}_{w}(s)}{t-s}<\frac{2\varphi(n)}{t-s}, $$
so $$|\mathfrak{I}(w;L,\varphi)|\leq 3\varphi(n)/\beta^{n}.$$

If $w\in \Xi_{n}(\beta)$, i.e., $|\mathfrak{I}(w)|=\beta^{-n}$,  write
\begin{equation*}
  \mathfrak{I}(w)=\left[\sum^{n}_{i=1}\frac{w_{i}}{\beta^{i}}, \sum^{n}_{i=1}\frac{w_{i}}{\beta^{i}}+\frac{1}{\beta^{n}}\right):=[a, a+\beta^{-n}).
\end{equation*}
Since $|L(a+\beta^{-n})-L(a)|\le \kappa \beta^{-n}<1/3$, one can see that the interval
\begin{align*}
 \mathfrak{f}_{w}(\mathfrak{I}(w))=\Big[-L(a),\ 1-L(a+\beta^{-n})\Big)
\end{align*}
is of length larger than $2/3$. Note that $-L(a)\leq 0 \leq 1-L(a+\beta^{-n})$.
Thus the interval
$$
\mathfrak{f}_{w}(\mathfrak{I}(w; L, \varphi))=\mathfrak{f}_{w}(\mathfrak{I}(w))\cap \Big(-\varphi(n), \varphi(n)\Big)
$$
is of length larger than $$\min\Big\{1/3, \varphi(n)\Big\}\ge \frac{\varphi(n)}{3}.$$ Therefore, there exists $s, t\in \mathfrak{I}(w;L,\varphi)$ such that $$
\frac{\varphi(n)}{3}=|\mathfrak{f}_{w}(t)-\mathfrak{f}_{w}(s)|\le \frac{4\beta^n}{3}\cdot |t-s|,
$$
so, $|\mathfrak{I}(w;L,\varphi)|\geq \frac{1}{4}\varphi(n)/\beta^{n}$.
\end{proof}
\begin{proof}[Sketch of the proof of Theorem \ref{T:rec}]
The convergent part is direct, since for any $n\in \mathbb{N}$ such that $n>\log_{\beta}(3\kappa)$,
\begin{equation*}
  \sum_{w\in \Sigma_{n}(\beta)}|\mathfrak{I}(w;L,\varphi)|\leq \#\Sigma_{n}(\beta) \cdot\frac{3\varphi(n)}{\beta^{n}}
  \leq \frac{\beta^{n+1}}{\beta-1}\cdot\frac{3\varphi(n)}{\beta^{n}}=\frac{3\beta}{\beta-1}\varphi(n).
\end{equation*}
Then the Borel-Cantelli lemma applies.

For the divergence part, we first prove that there exists a constant $\rho>0$ depending only on $\beta$ such that for any $m\in \mathbb{N}$ and $\tau\in \Xi_{m}(\beta)$, one has
\begin{equation*}
    \mathcal{L}(\mathfrak{R}_{\beta}(L,\varphi)\cap \mathfrak{I}(\tau))\geq \rho |\mathfrak{I}(\tau)|.
\end{equation*}
Suppose that $\lambda\in(0,1)$ is a real number satisfying the condition in Proposition \ref{P:numperf}. Let $c$ be a positive integer large enough such that $c>\max\{-\log_{\beta}\lambda,\log_{\beta}(3\kappa)\}$ and for all $n\geq c$, we have $\lambda\beta^{n}\geq 4\beta/(\beta-1)$.

Fix $m\in \mathbb{N}$ and $\tau\in \Xi_{m}(\beta)$. For any $n\geq m$, let
\begin{equation*}
    \widetilde{\mathbb{A}}_{n}(\tau)=\{w\in \Xi_{n}(\beta)\colon w|_{m}=\tau\}.
\end{equation*}
Given $l\geq m+c$, for any $m\leq n\leq l-c$ and $w\in\widetilde{\mathbb{A}}_{n}(\tau)$, let
\begin{equation*}
    \widetilde{\mathbb{H}}_{l}(w)=\{v\in \widetilde{\mathbb{A}}_{l}(\tau)\colon \mathfrak{I}(w;L,\varphi)\cap \mathfrak{I}(v;L,\varphi)\neq \varnothing\}.
\end{equation*}
Define $\widetilde{\mathbb{H}}^{\ast}_{l}(w)$ as before to be a set consisting of the lexicographically smallest and the lexicographically largest words in $\widetilde{\mathbb{H}}_{l}(w)$. For any $l\geq m+c$, let
\begin{equation*}
     \widetilde{\mathbb{B}}_{l}(\tau)=\widetilde{\mathbb{A}}_{l}(\tau)\backslash \bigcup_{m\leq n\leq l-c}\bigcup_{w\in \widetilde{\mathbb{A}}_{n}(\tau)}\widetilde{\mathbb{H}}^{\ast}_{l}(w)
\end{equation*}
and define
\begin{equation*}
    \mathfrak{F}_{l}(\tau;L,\varphi)=\bigcup_{v\in \widetilde{\mathbb{B}}_{l}(\tau)}\mathfrak{I}(v;L,\varphi).
\end{equation*}
For the position relations between the sets $[0,1), \mathfrak{I}(\tau), \mathfrak{I}(w)$ and $\mathfrak{I}(v)$, we have the following diagram:
\begin{center}
  \unitlength 3pt
\begin{picture}(115,37)
\linethickness{1pt}
  \put(12,33){\line(1,0){67}}
  \put(0,32){$[0,1):$}

  \put(25,26){\line(1,0){44}}
  \put(1,25){$\mathfrak{I}(\tau):$}
  \put(83,25){$|\tau|=m\in\mathbb{N}$}

  \put(34,15){\line(1,0){32}}
  \put(0.5,14){$\mathfrak{I}(w):$}
   \put(45,18){$\mathfrak{I}(w;L,\varphi)$}
  \put(83,14){$m\leq |w|=n\leq l-c$}

  \put(34,4){\line(1,0){32}}
  \put(1.5,3){$\mathfrak{I}(v):$}
  \put(47.5,8){$\mathfrak{I}(v;L,\varphi)$}
  \put(83,3){$|v|=l\geq m+c.$}

    \put(40.5,3.5){\line(0,1){1}}
    \put(49,3.5){\line(0,1){1}}
    \put(58,3.5){\line(0,1){1}}

\linethickness{3pt}
  \put(41,15){\line(1,0){22}}
  \put(35,4){\line(1,0){4}}
  \put(43,4){\line(1,0){5}}
  \put(51,4){\line(1,0){5}}
  \put(60,4){\line(1,0){5}}
\end{picture}
\end{center}

Using item $(iii)$ in Lemma \ref{L:full}: we have $\tau \varsigma\in \widetilde{\mathbb{A}}_{l}(\tau)$ for all $\varsigma\in \Xi_{l-m}(\beta)$; and for each $w\in \widetilde{\mathbb{A}}_{n}(\tau)$, we have $wv\in \widetilde{\mathbb{A}}_{l}(\tau)$ for all $v\in \Xi_{l-n}(\beta)$. Then, by Proposition \ref{P:numperf}, it follows that \begin{align*}&\#\widetilde{\mathbb{A}}_{l}(\tau)\geq \#\Xi_{l-m}(\beta)\geq \lambda \#\Sigma_{l-m}(\beta)\geq \lambda \beta^{l-m}, \ {\text{for}}\ l\ge m+c;\\
&\#\widetilde{\mathbb{A}}_{l}(\tau)\geq \#\Xi_{l-n}(\beta)\cdot\#\widetilde{\mathbb{A}}_{n}(\tau)\geq \lambda\beta^{l-n}\#\widetilde{\mathbb{A}}_{n}(\tau), \ {\text{for}}\ l\ge n+c.\end{align*}
Thus by the choice of $c$,
\begin{align*}
  \# \widetilde{\mathbb{B}}_{l}(\tau)&\ge \#\widetilde{\mathbb{A}}_{l}(\tau)-2\sum_{n=m}^{l-c}\# \widetilde{\mathbb{A}}_{n}(\tau)
\ge \#\widetilde{\mathbb{A}}_{l}(\tau)-2\#\widetilde{\mathbb{A}}_{l}(\tau)\lambda^{-1}\sum_{n=m}^{l-c}\beta^{n-l}\\
&\ge \#\widetilde{\mathbb{A}}_{l}(\tau)-\#\widetilde{\mathbb{A}}_{l}(\tau)\frac{2\beta^{-c+1}}{\lambda(\beta-1)}\ge \frac{1}{2}\cdot \#\widetilde{\mathbb{A}}_{l}(\tau).
\end{align*}
Consequently, we have \begin{equation*}
  \mathcal{L}(\mathfrak{F}_{l}(\tau;L,\varphi))\geq \#\widetilde{\mathbb{B}}_{l}(\tau)\cdot \frac{\varphi(l)}{4\beta^{l}}\geq \frac{1}{2}\#\widetilde{\mathbb{A}}_{l}(\tau)\cdot \frac{\varphi(l)}{4\beta^{l}}
  \geq\frac{\lambda\varphi(l)}{8\beta^{m}}=\frac{\lambda}{8}\varphi(l)|\mathfrak{I}(\tau)|.
\end{equation*}

As in the proof of Proposition \ref{P:leq}, we can show that for any $n\geq m+c$ and $l\geq n+c$,
\begin{align*}
  \mathcal{L}(\mathfrak{F}_{n}(\tau;L,\varphi)\cap \mathfrak{F}_{l}(\tau;L,\varphi))&\leq 3\varphi(l)\mathcal{L}(\mathfrak{F}_{n}(\tau;L,\varphi))\\
  &\leq \frac{24}{\lambda|\mathfrak{I}(\tau)|}\mathcal{L}(\mathfrak{F}_{n}(\tau;L,\varphi))\mathcal{L}(\mathfrak{F}_{l}(\tau;L,\varphi)).
\end{align*}
Then the Chung-Erd\"{o}s inequality enables us to conclude that
$$\mathcal{L}(\mathfrak{R}_{\beta}(L,\varphi)\cap \mathfrak{I}(\tau))\geq \rho|\mathfrak{I}(\tau)|,$$ where $\rho:=\frac{\lambda}{24}>0$ is a constant depending only on $\beta$.

Now, we show that for any $m\in \mathbb{N}$ and $u\in \Sigma_{m}(\beta)$, one has $$\mathcal{L}(\mathfrak{R}_{\beta}(L,\varphi)\cap \mathfrak{I}(u))\geq \frac{\rho}{\beta} \cdot |\mathfrak{I}(u)|.$$

Let $m\in \mathbb{N}$ and $u\in \Sigma_{m}(\beta)$. Let  $k_{0}\geq 0$ be the smallest integer such that $u 0^{k_{0}}1\in \Sigma_{m+k_{0}+1}(\beta)$.
Then by item $(i)$ in Lemma \ref{L:admin} for the admissibility of a word, we have
$$\mathfrak{I}(u)=\mathfrak{I}(u 0^{k_{0}}) \ {\text{and}}\ u 0^{k_{0}+1}\in \Xi_{m+k_{0}+1}(\beta).$$ Thus,
\begin{equation*}
    |\mathfrak{I}(u 0^{k_{0}+1})|=\beta^{-m-k_{0}-1}\geq |\mathfrak{I}(u 0^{k_{0}})|/\beta=|\mathfrak{I}(u)|/\beta.
\end{equation*}
Therefore,
\begin{equation*}
    \mathcal{L}(\mathfrak{R}_{\beta}(L,\varphi)\cap \mathfrak{I}(u))\geq \mathcal{L}(\mathfrak{R}_{\beta}(L,\varphi)\cap \mathfrak{I}(u0^{k_{0}+1}))\geq \rho|\mathfrak{I}(u0^{k_{0}+1})|\geq \frac{\rho}{\beta}\cdot |\mathfrak{I}(u)|.
\end{equation*}
\end{proof}


\begin{thebibliography}{11}


\bibitem {Bl} F. Blanchard, \emph{$\beta$-expansions and symbolic dynamics}, Theoret. Comput. Sci. \textbf{65} (1989), no. 2, 131--141.
\bibitem {Bo} M. D. Boshernitzan, \emph{Quantitative recurrence results}, Invent. Math. \textbf{113} (1993), no. 3, 617--631.


    \bibitem{Bu} Y. Bugeaud, Distribution modulo one and Diophantine approximation, Cambridge
Tracts in Mathematics, vol. 193, Cambridge University Press, 2012, xvi+300 pp.

\bibitem{BuL} Y. Bugeaud, L. Liao, \emph{Uniform Diophantine approximation related to b-ary and $\beta$-expansions}. Ergodic Theory Dynam. Systems \textbf{36} (2016), no. 1, 1--22.

\bibitem {BuWa} Y. Bugeaud, B.-W. Wang, \emph{Distribution of full cylinders and the Diophantine properties of the orbits in $\beta$-expansions}, J. Fractal Geom. \textbf{1} (2014), no. 2, 221--241.
\bibitem{Chung} K. L. Chung and P. Erd\"{o}s, On the application of the Borel-Cantelli lemma. Trans. Amer. Math. Soc. \textbf{72} (1952), 179--186.
\bibitem {DaKr} K. Dajani, C. Kraaikamp, \emph{Ergodic theory of numbers}, Washington: The Mathematical Association of America, 2002.
\bibitem{Da} K. Dajani, M. de Vries, \emph{Invariant densities for random $\beta$-expansions}. J. Eur. Math. Soc. (JEMS) \textbf{9} (2007), no. 1, 157--176.
\bibitem {FaWa} A. Fan, B. Wang, \emph{On the lengths of basic intervals in beta expansions}, Nonlinearity. \textbf{25} (2012), no. 5, 1329--1343.
\bibitem {Ge} A. O. Gel'fond, \emph{A common property of number systems} (Russian), Izv. Akad. Nauk SSSR. Ser. Mat. \textbf{23} (1959), 809--814.
\bibitem {Ho} F. Hofbauer, \emph{$\beta$-shifts have unique maximal measure}, Monatsh. Math. \textbf{85} (1978), no. 3, 189--198.
\bibitem {HuLiSiWa} M. Hussain, B. Li, D. Simmons, B. Wang, \emph{Dynamical Borel-Cantelli lemma for recurrence theory}, Preprint. arXiv: 2009.03515.

\bibitem{Khint} Ya. Khintchine, \emph{Einige S\"{a}tze \"{u}ber Kettenbruche, mit Anwendungen auf die Theorie der
Diophantischen Approximationen.} Math. Ann. \textbf{92} (1924), 115--125.
\bibitem {Kn} K. Knopp, \emph{Mengentheoretische Behandlung einiger Probleme der diophantischen Approximationen und der transfiniten Wahrscheinlichkeiten} (German), Math. Ann. \textbf{95} (1926), no. 1, 409--426.
\bibitem {LiLi} Y.-Q. Li, B. Li, \emph{Distributions of full and non-full words in beta-expansions}, J. Number Theory \textbf{190} (2018), 311--332.
\bibitem {LiPeWaWu} B. Li, T. Persson, B. W. Wang, J. Wu, \emph{Diophantine approximation of the orbit of $1$ in the dynamical system of beta expansion}, Math. Z. \textbf{276} (2014), no. 3-4, 799--827.
\bibitem{LuWu} F. L\"{u}, J. Wu, \emph{Diophantine analysis in beta-dynamical systems and Hausdorff dimensions}, Adv. Math. \textbf{290} (2016), 919--937.
\bibitem{LuWu2} F. L\"{u}, J. Wu, \emph{On dichotomy law for beta-dynamical system in parameter space}, Math. Z. \textbf{296} (2020), no. 1-2, 661--683.
\bibitem {Pa} W. Parry, \emph{On the $\beta$-expansions of real numbers}, Acta Math. Acad. Sci. Hungar. \textbf{11} (1960), 401--416.
\bibitem {PeSc} T. Persson, J. Schmeling, \emph{Dyadic Diophantine approximation and Katok's horseshoe approximation}, Acta Arith. \textbf{132} (2008), no. 2, 205--230.
\bibitem {Ph} W. Philipp, \emph{Some metrical theorems in number theory}, Pacific J. Math. \textbf{20} (1967), 109--127.
\bibitem {Re} A. R\'enyi, \emph{Representations for real numbers and their ergodic properties}, Acta Math. Acad. Sci. Hungar. \textbf{8} (1957), 477--493.
\bibitem {Sc} J. Schmeling, \emph{Symbolic dynamics for $\beta$-shifts and self-normal numbers}, Ergodic Theory Dynam. Systems \textbf{17} (1997), no. 3, 675--694.
\bibitem {ShWa} L. M. Shen, B. W. Wang, \emph{Shrinking target problems for beta-dynamical system}, Sci. China Math. \textbf{56} (2013), no. 1, 91--104.
\bibitem {TaWa} B. Tan, B.-W. Wang, \emph{Quantitative recurrence properties for beta-dynamical system}, Adv. Math. \textbf{228} (2011), no. 4, 2071--2097.
\bibitem {Th} D. J. Thompson, \emph{Irregular sets, the $\beta$-transformation and the almost specification property}, Trans. Amer. Math. Soc. \textbf{364} (2012), no. 10, 5395--5414.





\end{thebibliography}
\end{document}